\theoremstyle{plain}
\newtheorem{theorem}{Theorem}[section]
\newtheorem{proposition}[theorem]{Proposition}
\newtheorem{corollary}[theorem]{Corollary}
\theoremstyle{definition}
\newtheorem{definition}[theorem]{Definition}
\newtheorem{notation}[theorem]{Notation}
\newtheorem{remark}[theorem]{Remark}
\newtheorem{remarks}[theorem]{Remarks}
\theoremstyle{remark}
\newtheorem*{claim}{Claim}
\renewcommand\subsection{\@startsection{subsection}{2}%
  \z@{-.5\linespacing\@plus-.7\linespacing}{.5\linespacing}%
  {\normalfont\scshape}}
\renewcommand\subsubsection{\@startsection{subsubsection}{3}%
  \z@{.5\linespacing\@plus.7\linespacing}{-.5em}%
  {\normalfont\scshape}}
\DeclareMathOperator{\card}{card}
\DeclareMathOperator{\ev}{ev}
\DeclareMathOperator{\Mod}{Mod}
\DeclareMathOperator{\osc}{osc}
\DeclareMathOperator{\Smpl}{Smpl}
\DeclareMathOperator{\thry}{Th}
\DeclareMathOperator{\Th}{Th}
\DeclareMathOperator{\tp}{tp}
\DeclareMathOperator{\Ulim}{\mathcal{U}lim}
\DeclareMathOperator{\var}{var}
\DeclareMathOperator{\wneg}{\mathrel{\ooalign{\hss$\neg$\hss\cr\kern0.2ex\raise1.0ex\hbox{\scalebox{0.7}{w}}}}}
\renewcommand{\restriction}{\mathord{\upharpoonright}}
\def\Aapp{\mathrel\cA\joinrel\approx}
\def\appmod{\mathrel|\joinrel\approx}
\newcommand{\absR}[1]{\ensuremath{{\nrm{#1}}_{\RR}}}
\newcommand{\ab}{\ensuremath{a_{\bullet}}}
\newcommand{\appto}{\mathrel{\ooalign{\hss$\Leftarrow$\hss\cr\kern0.9ex\raise0.25ex\hbox{\scalebox{0.6}{$\sim$}}}}}
\newcommand{\appby}{\mathrel{\ooalign{\hss$\Rightarrow$\hss\cr\kern0.3ex\raise0.25ex\hbox{\scalebox{0.6}{$\sim$}}}}}
\newcommand{\AL}{\ensuremath{\cA_{\mathrm{L}}}}
\newcommand{\BA}{\mathbb{A}}
\newcommand{\bb}{\ensuremath{b_{\bullet}}}
\newcommand{\BS}{\mathbb{S}}
\newcommand{\bS}{\mathbf{S}}
\newcommand{\bU}{\mathbb{U}}
\newcommand{\cA}{\mathcal{A}}
\newcommand{\cC}{\mathcal{C}}
\newcommand{\cD}{\ensuremath{\mathcal{D}}}
\newcommand{\cL}{\ensuremath{\mathcal{L}}}
\newcommand{\cM}{\ensuremath{\mathcal{M}}}
\newcommand{\cN}{\mathcal{N}}
\newcommand{\compl}[1]{\ensuremath{{#1}^{\mathrm{c}}}}
\newcommand{\cP}{\mathcal{P}}
\newcommand{\cU}{\mathcal{U}}
\newcommand{\DD}{\ensuremath{\mathbb{D}}}
\newcommand{\dd}{\ensuremath{\mathrm{d}}}
\newcommand{\Eb}{E_{\bullet}}
\newcommand{\fb}{\ensuremath{\varphi_{\bullet}}}
\newcommand{\hcM}{\Hat{\cM}}
\newcommand{\hcN}{\Hat{\cN}}
\newcommand{\lamp}{\ensuremath{\lambda^+}}
\newcommand{\LL}{\ensuremath{\widetilde{L}}}
\newcommand{\Los}{{\L}o\'s}
\newcommand{\lixd}{\ensuremath{\ell^\infty(X_\lambda,d_\lambda)_{\Lambda}}}
\newcommand{\LO}{\ensuremath{\cL_{\Omega}^{\infty}}}
\newcommand{\Mb}{M_{\bullet}}
\newcommand{\muL}{\ensuremath{\mu_{\mathrm{L}}}}
\newcommand{\nLO}[1]{\normZ{#1}{\LO}}
\newcommand{\NN}{\mathbb{N}}
\newcommand{\normZ}[2]{\ensuremath{{\norm{#1}}_{#2}}}
\newcommand{\norm}[1]{\left\|#1\right\|}
\newcommand{\nrm}[1]{\left|#1\right|}
\newcommand{\omp}{\ensuremath{\omega^+}}
\newcommand{\Pfin}{\cP_{\!\mathrm{fin}}}
\newcommand{\PfinD}{\Pfin(\cD)}
\newcommand{\PP}{\mathbf{P}}
\newcommand{\QQ}{\mathbb{Q}}
\newcommand{\RR}{\mathbb{R}}
\newcommand{\sC}{\ensuremath{\mathscr{C}}}
\newcommand{\sD}{\ensuremath{\mathscr{D}}}
\newcommand{\ssb}{\ensuremath{s_{\bullet}}}
\newcommand{\TT}{\ensuremath{\widetilde{T}}}
\begin{document}
\title[Model theory and metric convergence I]
{Model theory and metric convergence I: \\
Metastability and dominated convergence}

\author[E. Dueñez]{Eduardo Dueñez}

\address{Department of Mathematics\\
  The University of Texas at San Antonio\\
  One UTSA Circle\\
  San Antonio, TX 78249-0664\\
  U.S.A.}

\email{eduardo.duenez@utsa.edu}

\author[J. N. Iovino]{José N. Iovino}

\address{Department of Mathematics\\
  The University of Texas at San Antonio\\
  One UTSA Circle\\
  San Antonio, TX 78249-0664\\
  U.S.A.}
  
\email{jose.iovino@utsa.edu}

\dedicatory{Dedicated to Ward Henson with gratitude on the occasion of his retirement.}

\date{\today}
\thanks{This research was partially funded by NSF grant DMS-1500615}

\subjclass[2010]{Primary: 40A05; Secondary: 03Cxx, 46Bxx, 28-xx}
\keywords{Convergence, metastability, metric structures, finitary}

\begin{abstract}
We study Tao's finitary viewpoint of convergence in metric spaces, as captured by the notion of metastability.
We adopt the perspective of continuous model theory.
We show that, in essence, metastable convergence with a given rate is the only formulation of metric convergence that can be captured by a theory in continuous first-order logic, a result we call the Uniform Metastability Principle. 
Philosophically, this principle amounts to the following meta-theorem:
\emph{``If a classical statement about convergence in metric structures is refined to a statement about metastable convergence with some rate, then the validity of the original statement implies the validity of its metastable version.''} 
As an instance of this phenomenon, we formulate an abstract version of Tao's Metastable Dominated Convergence Theorem as a statement about axiomatizable classes of metric structures, and show that it is a direct consequence of the Uniform Metastability Principle. 
\end{abstract}

\maketitle

\section*{Introduction}
The concept of convergence in metric spaces is fundamental in analysis. 
The present article is the first of a series focusing on results, both classical and new, in which the convergence of some sequence(s)---or, more generally, some nets---follows from suitable hypotheses. 
We shall use the loose nomenclature ``convergence theorem'' for any result of this kind; 
the best known such results are the classical Monotone and Dominated Convergence theorems, as well as the ergodic convergence theorems of von Neumann and Birkhoff. 
For simplicity, we assume that all metric spaces under consideration are complete (an alternative perspective would be the study of theorems about Cauchy sequences and nets in metric spaces not necessarily complete).

Given a convergence theorem, it is natural to ask whether it admits refinements whereby the conclusion states a stricter mode of convergence. 
When the statement of a convergence theorem involves a collection of sequences, the classical refinement of the property of simultaneous convergence is that of uniform convergence. 
However, few convergence theorems in analysis admit a natural refinement implying uniform convergence. 
Furthermore, even in the rare cases when uniform convergence of a family of sequences is implied, the parameters of uniform convergence are rarely universal: 
Typically, uniform convergence will hold in every structure (consisting of the ambient metric space, sequences therein, plus any other necessary ingredients) satisfying suitable hypotheses, but not uniformly across all such structures.
In fact, even if a convergence theorem should refer to the convergence of a single sequence, our focus shall be on the entirety of structures to which the theorem applies, and hence on all instances of relevant sequences. 
Thus, even when a ``single'' sequence is under immediate discussion, we ask whether its mode of convergence admits parameters that are uniform over all instances.

Tao~\cite{Tao:2008} introduced the notion of \emph{metastable convergence} (Definition~\ref{def:metastable}), which is an equivalent formulation of the usual Cauchy property (Proposition~\ref{prop:metastable-Cauchy}).%
\footnote{Before Tao's paper, the concept of metastability had been used  by Avigad, Gerhardy, Kohlenbach, and Towsner, under different nomenclature, in mathematical logic, specifically, in the area of proof mining. See the appendix for details.}
The metastable-convergence viewpoint leads to the notion of \emph{uniformly metastable convergence} (Definition~\ref{def:unif-metastab}), which is not only a metastable analogue of the classical property of uniform convergence of a family of sequences, but also a generalization thereof (Remark~\ref{rem:unif-metastab-Cauchy}). 
Tao obtains a metastable version of the classical Dominated Convergence Theorem that holds with metastable rates that are universal; this result plays a crucial role in the proof his remarkable result on the convergence of ergodic averages for polynomial abelian group actions~\cite{Tao:2008}. 
Walsh's subsequent generalization~\cite{Walsh:2012} of Tao's theorem to polynomial nilpotent group actions relies on a similar convergence theorem.

In this article we prove that metastability with a given rate is the only formulation of metric convergence that can be captured by a theory in continuous first-order logic (Proposition~\ref{prop:metastab-1st-ord}). 
This is a precise statement of Tao's observation that metastable convergence with a prescribed rate is a ``finitary'' property~\cite{Tao:Metastability}.
The conceptual backbone of the manuscript is the Uniform Metastability Principle (Proposition~\ref{prop:U-metastab}), which may be formulated as the following meta-theorem: 
\emph{``If a classical statement about convergence in metric structures is refined to a statement about metastable convergence with some rate, then the validity of the original statement implies the validity of its metastable version.''} 
As an instance of this phenomenon, we obtain a soft proof of a version of the Metastable Dominated Convergence Theorem. 
Our proof depends on neither infinitary arguments \emph{\`a la} Tao~\cite{Tao:2008}, nor recursive arguments and constructive analysis \emph{\`a la} Avigad \emph{et al}~\cite{Avigad-Dean-Rute:2012}.
We show that the Uniform Metastability Principle follows directly from the fundamental theorem of model theory of metric structures, namely, the Compactness Theorem.
We believe that, in spite of its simplicity, this principle captures a certain philosophical view revealing the scope of applicability of model-theoretic methods to the study of convergence in metric spaces. 
Although anticipated by Avigad and the second author~\cite{Avigad-Iovino:2013}, we are not aware of a purely model-theoretical formulation of this principle hitherto.

In order to make the results of this paper accessible to readers with no prior background in logic, particularly to researchers in analysis, the last section (Section~\ref{sec:metric-structures}) is a self-contained tutorial on the basics of model theory of metric structures.
The literature contains several equivalent formulations of this theory; 
the one in current widespread use is the framework of first-order continuous logic developed by Ben Yaacov and Usvyatsov~\cite{Ben-Yaacov-Usvyatsov:2010,Ben-Yaacov-Berenstein-Henson-Usvyatsov:2008} building upon ideas of Chang-Keisler~\cite{Chang-Keisler:1961,Chang-Keisler:1966} and Henson~\cite{Henson-Iovino:2002}. 
We use the language of continuous approximations originally developed by Henson, as we feel that it is simpler and more natural for the applications at hand. 
The tutorial of Section~\ref{sec:metric-structures} parallels portions of earlier introductions given by Henson and the second author that emphasize Banach spaces~\cite{Henson-Iovino:2002};
however, the present exposition places greater emphasis on metric structures and on topics of direct interest for the study of convergence theorems, particularly structures that are hybrid in the sense that they include metric sorts alongside discrete sorts.
Readers already comfortable with model theory of metric structures from this perspective need to consult Section~\ref{sec:metric-structures} only for reference.

Section~\ref{sec:metastability}, which does not depend on model theory at all, is an introduction to the metastable viewpoint of convergence.
As an attempt to shed light into the finitary nature of uniformly metastable convergence, Subsection~\ref{sec:metastability}-\ref{sec:motivation} discusses the relation between the usual Cauchy criterion and Tao's notion of metastable convergence. 
In Subsection~\ref{sec:metastability}-\ref{sec:metastable-convergence} we define various notions of metastability, oscillation and uniform metastability for arbitrary sequences or nets in a metric space, and show how these notions relate to classical ones. 
Proposition~\ref{prop:metastable-Cauchy}, in particular, states that metastable convergence is equivalent to usual convergence (always in complete spaces).

In Section~\ref{sec:nets-in-structures}, we begin the discussion of sequences and nets from the viewpoint of model theory. 
We exhibit a (continuous) first-order axiomatization of the property of metastable convergence with a given uniform rate. 
Subsequently, we state and prove the Uniform Metastability Principle
and a useful corollary thereof (Proposition~\ref{prop:U-metastab} and Corollary~\ref{cor:UMP-paramd}).

In Section~\ref{sec:Loeb}, we study finite measure structures from a model-theoretic viewpoint.
We introduce a certain class of metric structures, which we call Loeb structures due to their close relation to Loeb probability spaces from nonstandard analysis~\cite{Loeb:1975}.%
\footnote{This manuscript makes no use of results or notions from nonstandard analysis.}
We start by defining the notion of pre-Loeb structure.
Roughly speaking, a pre-Loeb structure is a metric structure that satisfies all the first-order properties of probability spaces $(\Omega,\cA,\PP)$, where $\PP$ is a probability measure on a Boolean algebra $\cA$ of subsets of~$\Omega$.  
Loeb structures are then defined as pre-Loeb structures that satisfy a saturation hypothesis. 
By basic model theory (Section~\ref{sec:metric-structures}-\ref{sec:saturation}), every metric structure can be extended naturally to a saturated structure.
As a by-product of saturation, Loeb structures possess the infinitary properties (i.e., countable additivity) of \emph{bona fide} classical probability spaces (Proposition~\ref{prop:Loeb-measure}).%
\footnote{More precisely, every Loeb structure induces a classical probability space $(\Omega,\cA_{\mathrm{L}},\PP\!_{\mathrm{L}})$ on the same underlying sample space~$\Omega$  (Proposition~\ref{prop:Loeb-measure}).}

In Section~\ref{sec:prob-struct}, we study integration from a model-theoretic viewpoint.
We begin by introducing the class of pre-integration structures. 
True to their name, these are structures satisfying the first-order axioms of the space $\LO$ of essentially bounded real functions on, say, a probability space $(\Omega,\cA,\PP)$, endowed with the integration linear functional $I : \LO\to\RR$ mapping  $f$ to $\int_{\Omega}f(\omega)\dd\PP(\omega)$. 
(In particular, a pre-integration structure is a pre-Loeb structure.) 
Of course, most models of those axioms will not correspond to \emph{bona fide}, countably additive probability spaces, nor will $I$ correspond to the operation of integration with respect to a measure in the usual sense.  
Nevertheless, a Riesz Representation Theorem  (Theorem~\ref{thm:Riesz}) for integration structures (that is, for saturated pre-integration structures), asserts that the space $\LO$ (whose elements correspond to bounded functions on the induced Loeb probability space) is endowed with a positive functional extending~$I$ and given by the usual operation of integration with respect to Loeb measure.

In Section~\ref{sec:DCT}, we introduce directed pre-integration structures. 
In essence, these are pre-integration structures $(\Omega,\cA,\PP,\LO,I)$ further endowed, say, for simplicity, with a bounded sequence $\varphi = (\varphi_n : n\in\NN)$ of elements of $\LO$ (identified with bounded functions on~$\Omega$). 
Directed integration structures are defined as saturated directed pre-integration structures, as expected. 
By the Riesz Representation Theorem (Theorem~\ref{thm:Riesz}), under the saturation hypothesis, $I$~corresponds to a classical integration operator on~$\LO$; therefore, the usual proof of the Dominated Convergence Theorem applies verbatim (Proposition~\ref{prop:DCT}). 
Obviously, the conclusion of the Dominated Convergence Theorem continues to hold if an additional hypothesis of uniform metastable pointwise convergence is imposed on the family~$\varphi$; 
moreover, this hypothesis is axiomatizable. 
It follows from the Uniform Metastability Principle that, under the additional hypothesis of uniform metastable pointwise convergence, the conclusion of the Dominated Convergence Theorem must admit a strengthening to convergence with a certain metastable rate. 
This yields an immediate proof of Tao's Metastable Dominated Convergence Theorem (Corollary~\ref{prop:meta-int}).

In the Appendix to this manuscript we state the viewpoint that a property of metric spaces should be considered finitary when it is equivalent to the satisfaction of a collection of axioms in a first-order language for metric structures.

We conclude this introduction by mentioning that Tao has formulated a nonstandard-analysis version of his Metastable Dominated Convergence Theorem in a blog post on \emph{Walsh’s ergodic theorem, metastability, and external Cauchy convergence}~\cite{Tao:Metastability}. Tao's insightful post served as philosophical motivation for the model-theoretic perspective adopted in the current manuscript. 

We gratefully acknowledge support for this research from the National Science Foundation through grant DMS-1500615. 
We thank the administration and sponsors of the Banff International Research Station for providing an excellent research environment in June 2016 during the meeting of the focused research group on Topological Methods in Model Theory (16frg676). 
We thank Xavier Caicedo, Chris Eagle, and Frank Tall for valuable discussions and comments that helped refine some of the ideas and content of this manuscript.
We also wish to thank Ulrich Kohlenbach for sharing his knowledge of the origins and early literature on metastable convergence.

\section{A finitary formulation of convergence}
\label{sec:metastability}

\subsection{Motivation}
\label{sec:motivation}

Given $\epsilon\ge 0$, we say that a sequence $a=(a_n:n\in\NN)$ in some metric space~$X$ is \emph{$[\epsilon]$-Cauchy} if at least one of its tails $a_{\ge N}=(a_n:n\ge N)$ satisfies the inequality $\osc_{\ge N}(a) \coloneqq \sup_{m,n\ge N}\dd(a_m,a_n)\le\epsilon$.
The $[\epsilon]$-Cauchy condition is infinitary in the sense that $\osc_{\ge N}(a)$ depends on the values $a_n$ as $n$ varies over the infinite set $\{N,N+1,\dots\}$. 
Tao's criterion for ``metastable convergence''~\cite{Tao:2008} imposes small oscillation conditions only on \emph{finite} segments $a_{[N,N']} = (a_n:N\le n\le N')$ of~$a$. 
For every fixed choice of a strictly increasing function $F:\NN\to\NN$ we regard the collection $\eta=([N,F(N)]:N\in\NN)$ as a ``sampling'' of~$\NN$, one finite segment at a time; 
abusing nomenclature, we use the name \emph{sampling} to refer to either the collection $\eta$ or the function~$F$ defining it. 
In Tao's nomenclature, for $\epsilon>0$ and sampling~$F$, the sequence $a$ is \emph{$[\epsilon,F]$-metastable} if the inequality $\osc_{[N,F(N)]}(a)\coloneqq \sup_{N\le m,n\le F(N)}\dd(a_m,a_n)\le\epsilon$ holds for some~$N$. 
For fixed $\epsilon>0$ and sampling~$F$, ``\emph{$[\epsilon,F]$-metastable}'' is a weaker property than ``\emph{$[\epsilon]$-Cauchy}'' inasmuch as the former involves only the values of~$a$ on the subsets $a_{[N,F(N)]}$ of the tails $a_{\ge N}$. 
However, when $F$ varies over all samplings of~$\NN$, the conjunction of the corresponding properties of $[\epsilon,F]$-metastability of~$a$ implies that $a$ is $[\epsilon]$-Cauchy. 
This leads to Tao's characterization of convergence (i.e., of the Cauchy property in complete spaces) as $[\epsilon,F]$-metastability for all $\epsilon>0$ and all samplings~$F$ (Proposition~\ref{prop:metastable-Cauchy}).

The metastable characterization of convergence is still not quite finitary 
because the existential statement on $N\in\NN$ is infinitary. 
(We use the term ``finitary'' as a synonym for ``axiomatizable in first-order continuous logic'' per the Appendix to this manuscript.)
Tao's concept of \emph{metastability with a given rate} arises by restricting this existential statement to a bounded (finitary) one. 
After Tao, for fixed $\epsilon>0$ and sampling~$F$, we call $M\in\NN$ an \emph{(upper bound on the) rate of $[\epsilon,F]$-metastable convergence of~$a$} if $\osc_{[N,F(N)]}(a)\le\epsilon$ holds for some $N\le M$.
Any collection $\Mb = (M_{\epsilon,F})\subset\NN$ of natural numbers, one for each $\epsilon>0$ and sampling~$F$, is an \emph{(upper bound on the) rate of metastability of~$a$} if $M_{\epsilon,F}$ bounds the $[\epsilon,F]$-rate of metastable convergence of~$a$ for all $\epsilon,F$. 
Evidently, given $\epsilon,F$ and $M\in\NN$, the property ``$M$ is a rate of $[\epsilon,F]$-metastability for~$a$'' is finitary. 
 
Given arbitrary $\epsilon,F$ and any Cauchy sequence~$a$, it is clear that $a$ admits \emph{some} bound $\Mb$ on its rate of metastability---one may take $M_{\epsilon,F}$ to be any $N$ satisfying $\osc_{\ge N}(a)\le\epsilon$. 
However, no choice of $\Mb$ applies uniformly to \emph{all} Cauchy sequences.

In Subsection~\ref{sec:metastable-convergence} below, we define various notions of metastable convergence for nets in metric spaces. 
In particular, the notion of metastable convergence with a given rate (Definition~\ref{def:unif-metastab}) is the natural finitary notion of convergence of nets extending Tao's (for sequences).

\subsection{Metastable convergence of nets in metric spaces}
\label{sec:metastable-convergence}

Throughout this section we fix a nonempty directed set $(\cD,\le)$
(that is, $\le$ is a reflexive, antisymmetric and transitive binary relation on~$\cD$ such that every pair of elements has an upper bound).
We denote by $\cD_{\ge i}$ the final segment $\{j\in \cD : j\ge i\}\subset\cD$.

We recall that a $\cD$-net $\ab = (a_i:i\in \cD)$ in a topological space~$X$ \emph{converges to $b\in X$} if, for every neighborhood $B$ of $b$ in~$X$, there exists $i\in \cD$ such that $a_i\in B$ for all $j\ge i$. 
If $(X,\dd)$ is a metric space and $\epsilon\ge 0$, we will say that the $\cD$-net $\ab$ is \emph{$[\epsilon]$-Cauchy} if there exists $i\in \cD$ such that $\dd(a_j,a_{j'})\le\epsilon$ for all $j,j'\ge i$, and that $\ab$ is \emph{$(\epsilon)$-Cauchy} (or \emph{$\epsilon$-Cauchy}) if $\ab$ is $[\epsilon']$-Cauchy for all $\epsilon'>\epsilon$. 
The net $\ab$ is Cauchy in the usual sense when it is $0$-Cauchy;
in this case, $\ab$ converges to $b$ if for all $\epsilon>0$ there is $i\in \cD$ such that $\dd(a_i,b)\le \epsilon$ for all $j\ge i$. 
Every Cauchy net in a complete metric space $X$ converges to some (necessarily unique) element $b\in X$.
All metric spaces under consideration shall be complete, so we use the term ``convergent'' as a synonym of ``Cauchy''.

\begin{definition}
  A \emph{sampling} of the directed set~$(\cD,\le)$ is any collection $\eta = (\eta_i : i\in \cD)$ of finite subsets of~$\cD$ (indexed by $\cD$ itself), such that $\eta_i$ is a nonempty finite subset of~$\cD_{\ge i}$ for each $i\in \cD$.%
\footnote{The condition ``$\eta_i$ is nonempty'' is not logically necessary. 
We impose it for heuristic reasons only.}
The collection of all samplings of~$\cD$ will be denoted $\Smpl(\cD)$. 
\end{definition}

\begin{definition}\label{def:metastable}
Fix a metric space $(X,\dd)$ and directed set $(\cD,\le)$. 
For $\eta\in\Smpl(\cD)$ and $\epsilon>0$, a $\cD$-net $\ab = (a_i : i\in \cD)$ in~$X$ is:
\begin{itemize}
\item  \emph{strict $\epsilon,\eta$-metastable}, or \emph{$[\epsilon,\eta]$-metastable} (note the square brackets), if there exists $i\in \cD$ such that $\dd(a_j,a_{j'})\le\epsilon$ for all $j,j'\in\eta_i$.
Any such $i$ is a \emph{witness} of the (strict) $[\epsilon,\eta]$-metastability of~$\ab$.
\item \emph{lax $\epsilon,\eta$-metastable}, or \emph{$(\epsilon,\eta)$-metastable} (note the round parentheses), if $\ab$ is $[\epsilon',\eta]$-metastable for all $\epsilon'>\epsilon$, 
\item  \emph{$\eta$-metastable,} if $\ab$ is $(0,\eta)$-metastable,  
\item {$\epsilon$-metastable,} if $\ab$ is $(\epsilon,\eta')$-metastable for all $\eta'\in\Smpl(\cD)$, 
\item \emph{metastable} if $\ab$ is $0$-metastable.
\end{itemize}
\end{definition}
For fixed $\epsilon,\eta$, $(\epsilon,\eta)$-metastability (resp., $[\epsilon,\eta]$-metastability, $\epsilon$-metastability) implies $(\epsilon',\eta)$-metastability (resp., $[\epsilon',\eta]$-metastability, $\epsilon'$-metastability) for all $\epsilon'\ge\epsilon$. 
It is also clear that strict $[\epsilon,\eta]$-metastability implies lax $(\epsilon,\eta)$-metastability, that lax $(\epsilon',\eta)$-metastability for all $\epsilon'>\epsilon\ge 0$ implies lax $(\epsilon,\eta)$-metastability, and consequently that $\epsilon'$-metastability for all $\epsilon'>\epsilon\ge0$ implies $\epsilon$-metastability.
We also remark that any net $\ab$ is necessarily $C$-metastable if $C$ is an upper bound on the distances $\dd(a_i,a_j)$ ($i,j\in\cD$); 
this is the case, in particular, if $\dd(a_i,x_0)\le C/2$ for some fixed $x_0\in X$ and all $i\in\cD$.

\begin{definition}\label{def:osc}
Let $\ab = (a_i:i\in\cD)$ be an arbitrary bounded $\cD$-net on a metric space~$(X,\dd)$.

  For every $\eta\in\Smpl(\cD)$, the \emph{$\eta$-oscillation $\osc_{\eta}(\ab)$ of~$\ab$} is
  \begin{equation*}
      \osc_{\eta}(\ab) 
= \inf\{\epsilon\ge 0 : \text{$\ab$ is $[\epsilon,\eta]$-metastable}\}.
  \end{equation*}

 The \emph{oscillation $\osc(\ab)$ of~$\ab$} is
  \begin{equation*}
    \osc(\ab) = \sup\{\osc_{\eta}(\ab) : \eta\in\Smpl(\cD)\}.
  \end{equation*}

\end{definition}
Note that the notations ``$\osc$'', ``$\osc_{\eta}$'' fail to exhibit the dependence of $\osc(\ab)$, $\osc_{\eta}(\ab)$ on~$\cD$. 
However, when these notations are used, the directed set~$\cD$ will be fixed, precluding ambiguous interpretations. 

\begin{proposition} \label{lem:osc-metastab}
For any bounded $\cD$-net~$\ab$, $\eta\in\Smpl(\cD)$ and $\epsilon\ge 0$:
  \begin{enumerate}[\normalfont(1)]
  \item $\osc_{\eta}(\ab) \le \epsilon$ if and only if $\ab$ is $(\epsilon,\eta)$-metastable,
  \item $\osc_{\eta}(\ab) = \inf\{\epsilon\ge 0 : \text{$\osc_{\eta_i}(\ab)\le\epsilon$ for some $i\in\cD$}\}$,
  \item $\osc_{\eta}(\ab) 
= \min\{\epsilon\ge 0 : \text{$\ab$ is $(\epsilon,\eta)$-metastable}\}$,
  \item $\osc_{\eta}(\ab) 
= \inf\{\osc_{\eta_i}(\ab) : i\in\cD\}$,
  \item $\osc(\ab) 
= \min\{\epsilon\ge 0 : \text{$\ab$ is $\epsilon$-metastable}\}$,
  \item $\osc(\ab)
= \sup\{\epsilon\ge 0 : \text{for all $i\in\cD$ there exist $j,j'\ge i$ with $\dd(a_j,a_{j'})\ge\epsilon$}\}$,
  \item $\ab$ is $\epsilon$-Cauchy if and only if $\osc(\ab)\le \epsilon$, 
  \item $\ab$ is $\epsilon$-Cauchy if and only if $\ab$ is $\epsilon$-metastable.
  \end{enumerate}
\end{proposition}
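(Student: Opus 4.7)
The plan is to prove the eight assertions in order, with (1)--(5) following fairly directly from the definitions and (6) being the key computation; (7) and (8) then drop out.

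First, observe that if $\ab$ is $[\epsilon',\eta]$-metastable with witness~$i$ and $\epsilon\ge\epsilon'$, then $\ab$ is $[\epsilon,\eta]$-metastable with the same witness, since $\dd(a_j,a_{j'})\le\epsilon'\le\epsilon$ for all $j,j'\in\eta_i$. It follows that $(\epsilon,\eta)$-metastability is equivalent to $[\epsilon',\eta]$-metastability for every $\epsilon'>\epsilon$, which by the definition of the infimum is equivalent to $\osc_{\eta}(\ab)\le\epsilon$; this gives~(1), and (3) is the restatement that the infimum in $\osc_{\eta}(\ab) = \inf\{\epsilon : [\epsilon,\eta]\text{-metastable}\}$ is in fact a minimum once one passes to the lax version. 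For~(2) I note that, interpreting $\osc_{\eta_i}(\ab)$ as $\max_{j,j'\in\eta_i}\dd(a_j,a_{j'})$, the statement ``$\ab$ is $[\epsilon,\eta]$-metastable'' is literally ``$\osc_{\eta_i}(\ab)\le\epsilon$ for some $i\in\cD$,'' so the defining infimum for $\osc_{\eta}$ rewrites as in~(2); swapping the two infima then yields~(4).

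For~(5), $\ab$ is $\epsilon$-metastable iff it is $(\epsilon,\eta)$-metastable for every $\eta$, iff by~(1) $\osc_{\eta}(\ab)\le\epsilon$ for every $\eta$, iff $\osc(\ab)\le\epsilon$; the smallest such $\epsilon$ is $\osc(\ab)$ itself, achieved because $\osc(\ab) = \sup_{\eta}\osc_{\eta}(\ab)$ and $(\osc_\eta(\ab),\eta)$-metastability holds by~(3).

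The crux is~(6). Let $S = \sup\{\epsilon\ge 0 : \forall i\in\cD,\ \exists j,j'\ge i,\ \dd(a_j,a_{j'})\ge\epsilon\}$; I will first show $S=\inf_{i\in\cD}\sup_{j,j'\ge i}\dd(a_j,a_{j'})$ by the standard argument comparing ``$\ge\epsilon$'' quantifiers to suprema over tails, and then show that this common quantity equals $\osc(\ab)$. For the inequality $\osc(\ab)\le S$, given any $\epsilon>S$ I pick $i_0$ with $\sup_{j,j'\ge i_0}\dd(a_j,a_{j'})\le\epsilon$; since every sampling~$\eta$ satisfies $\eta_{i_0}\subset\cD_{\ge i_0}$, we have $\osc_{\eta_{i_0}}(\ab)\le\epsilon$, hence $\osc_{\eta}(\ab)\le\epsilon$ by~(4), and taking the supremum over~$\eta$ yields $\osc(\ab)\le\epsilon$. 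For the reverse inequality, the tricky step, I assume $S\ge\epsilon$ and, for each $\delta>0$, construct a sampling $\eta^{\delta}$ by choosing for every $i\in\cD$ a pair $\eta^{\delta}_i=\{j_1(i),j_2(i)\}\subset\cD_{\ge i}$ with $\dd(a_{j_1(i)},a_{j_2(i)})\ge\epsilon-\delta$; such a pair exists because $\sup_{j,j'\ge i}\dd(a_j,a_{j'})\ge S\ge\epsilon$. Then $\osc_{\eta^{\delta}_i}(\ab)\ge\epsilon-\delta$ for every $i$, so by~(4) $\osc_{\eta^{\delta}}(\ab)\ge\epsilon-\delta$, whence $\osc(\ab)\ge\epsilon-\delta$; letting $\delta\to 0$ gives $\osc(\ab)\ge\epsilon$, so $\osc(\ab)\ge S$.

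Finally, $\ab$ is $\epsilon$-Cauchy iff for every $\epsilon'>\epsilon$ there exists $i$ with $\sup_{j,j'\ge i}\dd(a_j,a_{j'})\le\epsilon'$, iff $\inf_{i}\sup_{j,j'\ge i}\dd(a_j,a_{j'})\le\epsilon$, iff (by~(6)) $\osc(\ab)\le\epsilon$, proving~(7); combining (5) and (7) immediately gives~(8). The only genuinely non-formal step is the construction of the separating sampling $\eta^{\delta}$ in~(6), where I must choose witnessing pairs simultaneously for every $i\in\cD$; this uses the axiom of choice but requires no further structure on~$\cD$.
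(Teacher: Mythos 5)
Your proof is correct and follows essentially the same route as the paper: (1)--(4) unpack the definitions, (5) follows, (6) is the crux and is handled by constructing a ``bad'' sampling $\eta^{\delta}$ witnessing large oscillation (the paper does the same with a two-element sampling exceeding a threshold $t<s$), and (7), (8) then drop out. The only cosmetic difference is that you route (6) and (7) through the intermediate tail-oscillation quantity $\inf_{i}\sup_{j,j'\ge i}\dd(a_j,a_{j'})$, which you identify with the paper's $\sup$-characterization by a standard argument; the paper instead argues directly from the $\sup$-characterization in both (6) and (7), but the underlying estimates are identical.
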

It is customary to use~(6) above as the definition of~$\osc(\ab)$. 
\begin{proof}
  \begin{enumerate}
  \item Let $r=\osc_{\eta}(\ab)$. 
For any $t>r$ the definition of $\osc_{\eta}$ implies that $\ab$ is $[s,\eta]$-metastable for some $s\in(r,t)$, hence also $[t,\eta]$-metastable; therefore, $\ab$ is $(r,\eta)$-metastable, hence \emph{a fortiori} $(\epsilon,\eta)$-metastable for all $\epsilon\ge r$. 
Conversely, if $\ab$ is $(\epsilon,\eta)$-metastable, then it is $[\epsilon',\eta]$-metastable for all $\epsilon'\ge\epsilon$, so the  definition of $\osc_{\eta}$ implies $r \le\epsilon$. 
  \item Clearly, $\ab$ is $[\epsilon,\eta]$-metastable iff $\osc_{\eta_i}(\ab)\le\epsilon$ for some $i\in\cD$. 
  \item This follows immediately from~(1).
  \item Let $r=\osc_{\eta}(\ab)$ and $s = \inf\{\osc_{\eta_i}(\ab) : i\in\cD\}$. 
By part~(2), we have $r\le s$. 
Conversely, if $\epsilon\ge\osc_{\eta_i}(\ab)$ for some $i\in\cD$, then $\epsilon\ge s$, hence $s\le r$.
\item Clearly, the set $\{\epsilon\ge 0 :\text{$\ab$ is $\epsilon$-metastable}\}$ has a least element, say~$s$, since $\epsilon'$-metastability for all $\epsilon'>\epsilon$ is equivalent to $\epsilon$-metastability.
Let $r = \osc(\ab)$.
If $\ab$ is $\epsilon$-metastable, then it is $(\epsilon,\eta)$-metastable for all~$\eta\in\Smpl(\cD)$, hence $\osc_{\eta}(\ab)\le\epsilon$ by~(1), so $r\le s$ by~(3).
Conversely, for all $\eta\in\Smpl(\cD)$, $\ab$ is $(r,\eta)$-metastable by~(1) and the definition of~$\osc$, and thus $\ab$ is $r$-metastable, so $r\ge s$.
\item Let $r = \osc(\ab)$ and 
\[
s = \sup\{\epsilon\ge 0 : \text{for all $i\in\cD$ there exist $j,j'\ge i$ with $\dd(a_j,a_{j'})\ge\epsilon$}\}.
\]
If $0\le t<s$, then for all $i\in\cD$ there exists $\eta_i=\{j,j'\}\subset\cD_{\ge i}$ such that $\dd(a_j,a_{j'})>t$, hence $\ab$ is not $[t,\eta]$-metastable for $\eta = (\eta_i:i\in\cD)$. 
It follows that $t\le \osc_{\eta}(\ab)\le r$. 
As this holds for all positive $t<s$, we have $s\le r$. 
Conversely, if $0\le t<r$, then $\osc_{\eta}(\ab) > t$ for some $\eta\in\Smpl(\cD)$. 
By~(1), $\ab$ is not $(t,\eta)$-metastable. 
Hence, there is $t'>t$ such that for all $i\in\cD$ there exist $j,j'\in\eta_i\subset\cD_{\ge i}$ with $\dd(a_j,a_{j'}) > t' > t$; hence, $t\le s$. 
As this holds for all $t<r$, we have $r\le s$.
\item Let $r=\osc(\ab)$. 
It follows from (6) that $\ab$ is $r$-Cauchy, thus also $\epsilon$-Cauchy for all $\epsilon\ge r$, 
Conversely, if $0\le\epsilon < r$, then by~(6) there exists $\epsilon'\in(\epsilon,r)$ with the property that for all $i\in\cD$ there exist $j,j'\ge i$ with $\dd(a_j,a_{j'})\ge\epsilon'$. 
Then $\ab$ is not $\epsilon$-Cauchy.\qedhere
\item This follows immediately from~(5) and~(7), plus the remarks following Definition~\ref{def:metastable}.
\end{enumerate}
\end{proof}

\begin{proposition}[Metastable characterization of the Cauchy property]\label{prop:metastable-Cauchy}
  A net in a metric space~$X$ is Cauchy if and only if it is metastable.
\end{proposition}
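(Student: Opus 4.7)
The statement is essentially the $\epsilon=0$ specialization of Proposition~\ref{lem:osc-metastab}(8), since by definition ``Cauchy'' means $0$-Cauchy and, per Definition~\ref{def:metastable}, ``metastable'' means $0$-metastable. Nevertheless, I would give a direct proof, since the two implications are short and transparent.

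For the forward direction, suppose $\ab$ is Cauchy. Fix any $\eta \in \Smpl(\cD)$ and any $\epsilon>0$. By the Cauchy property there exists $i \in \cD$ such that $\dd(a_j,a_{j'}) \le \epsilon$ for all $j,j' \ge i$. Since $\eta_i$ is a finite subset of $\cD_{\ge i}$, this $i$ witnesses the $[\epsilon,\eta]$-metastability of $\ab$. As $\epsilon>0$ and $\eta$ are arbitrary, $\ab$ is $(0,\eta)$-metastable for every $\eta$, i.e., $\ab$ is $0$-metastable.

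For the converse I would argue contrapositively. Suppose $\ab$ is not Cauchy. Then there is some $\epsilon>0$ such that for every $i \in \cD$ one can choose $j_i,j_i' \in \cD_{\ge i}$ with $\dd(a_{j_i},a_{j_i'}) > \epsilon$. Setting $\eta_i = \{j_i,j_i'\}$ defines a bona fide sampling $\eta \in \Smpl(\cD)$ (each $\eta_i$ is a nonempty finite subset of $\cD_{\ge i}$), and by construction no $i$ witnesses $[\epsilon,\eta]$-metastability of $\ab$. Hence $\ab$ fails to be $(\epsilon',\eta)$-metastable for any $\epsilon' < \epsilon$; in particular it is not $(0,\eta)$-metastable, and so not metastable.

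I do not anticipate any real obstacle: the argument is a direct unpacking of the definitions. The only point that deserves care is the construction of the sampling in the second direction, namely verifying that the two-element sets $\eta_i \subset \cD_{\ge i}$ indeed form a valid element of $\Smpl(\cD)$, which is immediate. If one prefers maximal brevity, the entire proof can be compressed to a single sentence invoking Proposition~\ref{lem:osc-metastab}(8) at $\epsilon=0$.
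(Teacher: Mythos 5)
Your proposal is correct, and its one-sentence version (invoking Proposition~\ref{lem:osc-metastab}(8) at $\epsilon=0$) is exactly the paper's proof. The expanded direct argument you offer is also valid: the forward direction correctly observes that a Cauchy witness $i$ works for every sampling since $\eta_i\subset\cD_{\ge i}$, and the converse correctly builds, from the failure of $[\epsilon]$-Cauchyness, a two-element-per-index sampling that defeats $[\epsilon,\eta]$-metastability; the monotonicity of metastability in $\epsilon$ then closes the argument. In effect this re-derives, from the definitions, the special case of items (6)--(8) of Proposition~\ref{lem:osc-metastab} actually needed here. The paper's route is more economical because it has already established the oscillation machinery; your direct version is the better choice if one wants the proposition to stand on its own without that machinery, and as you note it incidentally shows that samplings with $|\eta_i|\le 2$ already suffice, which the paper separately records as a remark.
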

\begin{proof}
This is the particular case $\epsilon=0$ of~(8) in Proposition~\ref{lem:osc-metastab}.
\end{proof}

\begin{remark}
  Propositions~\ref{lem:osc-metastab} and~\ref{prop:metastable-Cauchy} remain true if we consider only samplings $\eta$ such that $\eta_i$ consists of no more than two elements of~$\cD_{\ge i}$ for all $i\in\cD$. 
This more restrictive definition of sampling could be used in all further developments without any essential changes.
\end{remark}

\begin{definition}\label{def:unif-metastab}
  Fix a directed set~$\cD$.  
The collection of all finite subsets of $\cD$ will be denoted $\PfinD$.  
Let $\ab = (a_i : i\in\cD)$ be a $\cD$-net in a metric space~$X$.

Given $\epsilon>0$ and $\eta\in\Smpl(\cD)$, a set $E\in\PfinD$ is called \emph{a (bound on the) rate of $[\epsilon,\eta]$-metastability of~$\ab$}
if there exists a witness $i\in E$ of the $[\epsilon,\eta]$-metastability of~$\ab$. 
(No sequence $\ab$ has this property if $E$ is empty.)

For $\epsilon>0$, a collection $\Eb = (E_{\eta} : \eta\in\Smpl(\cD))\subset\PfinD$ is called a \emph{(bound on the) rate of $[\epsilon]$-metastability of~$\ab$}
if $E_{\eta}$ is a rate of $[\epsilon,\eta]$-metastability of~$\ab$ for all $\eta\in\Smpl(\cD)$.

For $r\ge 0$ and $\eta\in\Smpl(\cD)$, a collection $\Eb = (E_{\epsilon} : \epsilon>r)$ in $\PfinD$ is called a \emph{(bound on the) rate of $(r,\eta)$-metastability of~$\ab$}
if $E_{\epsilon}$ is a rate of $[\epsilon,\eta]$-metastability of~$\ab$ for all $\epsilon>r$. 

For $r\ge 0$, a collection $\Eb = (E_{\epsilon,\eta} : \eta\in\Smpl(\cD),\epsilon>r)$ in $\PfinD$ 
is called \emph{a (bound on the) rate of $r$-metastability of~$\ab$},
if $E_{\cdot,\eta}$ is a rate of $(r,\eta)$-metastability of~$\ab$ for all $\eta\in\Smpl(\cD)$, where $E_{\cdot,\eta} = (E_{\epsilon,\eta} : \epsilon>r)$. 
When $r=0$, we say simply that $\Eb$ is a (bound on the) rate of metastability of~$\ab$.

If $\cC$ is any collection of $\cD$-nets in~$X$, we say that:
\begin{itemize}
\item \emph{$E$ is a uniform (bound on the) rate of $[\epsilon,\eta]$-metastability for~$\cC$,} or \emph{$\cC$ is $E$-uniformly $[\epsilon,\eta]$-metastable,}
if $\ab$ is $E$-uniformly $[\epsilon,\eta]$-metastable for all $\ab\in\cC$;
\item \emph{$\Eb$ is a uniform (bound on the) rate of $[\epsilon]$-metastability for~$\cC$,} or \emph{$\cC$ is $\Eb$-uniformly $[\epsilon]$-metastable,} 
if $\ab$ is $\Eb$-uniformly $[\epsilon]$-metastable for all $\ab\in\cC$;
\item \emph{$\Eb$ is a uniform (bound on the) rate of $r$-metastability for~$\cC$,} or \emph{$\cC$ is $\Eb$-uniformly $r$-metastable,} 
if $\ab$ is $\Eb$-uniformly $r$-metastable for all $\ab\in\cC$.
(When $r=0$, we usually omit it.)
\end{itemize}
\end{definition}

\begin{remark}\label{rem:unif-metastab-Cauchy}
We show that the concept of uniform metastability generalizes that of uniform convergence. 
Let $\cC$ be any collection of $\cD$-nets.
Given $M\in\cD$ and $\epsilon>0$, the collection $\cC$ is \emph{$M$-uniformly $[\epsilon]$-Cauchy} if $\osc_{\ge M}(\ab) \coloneqq \sup_{j,j'\ge M}\dd(a_j,a_{j'}) \le \epsilon$ for each~$\epsilon>0$ and $\ab\in\cC$.
Given $\Mb = (M_\epsilon : \epsilon>0)\subset\cD$, the collection~$\cC$ is \emph{$\Mb$-uniformly Cauchy} if every $\ab\in\cC$ is $M_{\epsilon}$-uniformly $[\epsilon]$-Cauchy for each~$\epsilon>0$.
Corresponding to $\Mb$ there is a rate of metastability $\Eb = (E_{\epsilon,\eta})$ defined by $E_{\epsilon,\eta} = \{M_{\epsilon}\}\in\Pfin(D)$. 
Under this identification, the collection~$\cC$ is $\Mb$-uniformly Cauchy if and only if it is $\Eb$-metastable. 

Clearly, metastability rates $\Eb$ obtained from a collection $\Mb$ as above are very special. 
The following example, due to Avigad \emph{et al.}~\cite{Avigad-Dean-Rute:2012}, exhibits a family of uniformly metastable sequences that are not uniformly convergent.
Every monotonically increasing sequence in~$[0,1]$ is convergent. 
Let $\cC$ be the collection of all such sequences. 
Clearly, there is no rate $\Mb$ such that all sequences $\ab\in\cC$ are $\Mb$-uniformly Cauchy; in fact, for $\epsilon\in(0,1)$, the sequence $\ab$ with $a_m=0$ for $m\le M_{\epsilon}$ and $a_m=1$ for $m>M_{\epsilon}$ satisfies $\osc_{\ge E_{\epsilon}}(\ab) = 1 > \epsilon$.
On the other hand, for any $\epsilon>0$ and a sampling of~$\NN$ given as a strictly increasing function $F : \NN\to\NN$ (as per Section~\ref{sec:metastability}.\ref{sec:motivation}), let $E_{\epsilon,F} = \{m\in\NN : m\le F^{(k)}(0)\}$ where $k = \lceil\epsilon^{-1}\rceil$ is the smallest integer no smaller than~$\epsilon^{-1}$ and $F^{(k)}(0)$ is the $k$-fold iterate of~$F$ applied to~$0$.
Since $k\epsilon\ge 1$, at least one of the $k$ differences $a_{F^{(j+1)}(0)}-a_{F^{(j)}(0)}$ ($j=0,1,\dots,k-1$) must not exceed~$\epsilon$ whenever $\ab\in\cC$, hence $\cC$ is $E_{\epsilon,F}$-uniformly $[\epsilon]$-metastable. 
The collection $\Eb = (E_{\epsilon,F})$ is a uniform metastability rate for~$\cC$. 
\end{remark}

The concept of uniform metastability is crucial to our applications.
As discussed in Remark~\ref{rem:unif-metastab-Cauchy} above, uniform metastability is a proper generalization of the classical notion of uniform convergence. 
Moreover, uniform metastability with a given rate is axiomatizable in the logic of metric structures (Proposition~\ref{prop:metastab-1st-ord} below). 
This allows for powerful applications of model theory, particularly of compactness (e.g., the Uniform Metastability Principle, Proposition~\ref{prop:U-metastab}).
We believe that many convergence results in analysis follow from hypotheses captured by the semantics of first-order logic for metric structures; consequently, such results ought to admit refinements to convergence with a metastability rate---and moreover the rate ought to be universal, i.e., independent of the structure to which the theorem is applied. 
Tao's Metastable Dominated Convergence Theorem, as stated and proved in Section~\ref{sec:DCT} below, is but one example of this philosophy.

\section{Convergence of nets in metric structures and uniform metastability}
\label{sec:nets-in-structures}

Throughout Sections~\ref{sec:nets-in-structures} to~\ref{sec:DCT}, we will assume that the reader is familiar with the material presented in Section~\ref{sec:metric-structures}. 
In particular, we assume familiarity with the notions of Henson metric structure, signature, positive bounded formula, approximate satisfaction, the Compactness Theorem~\ref{T:compactness}, and saturated structures. 
We will deal with multi-sorted structures that contain discrete sorts alongside nondiscrete ones. 
Recall that $\mathbb{R}$, equipped with its ordered field structure and a constant for each rational number, occurs tacitly as a sort of every metric structure, and that discrete predicates in a structure are seen as $\{0,1\}$-valued functions. 
If $(M,d,a)$ is a pointed metric space and $C\ge0$, the set $\{\,x\in M\mid d(x,a)\le C\,\}$ will be denoted $M^{[C]}$.

Throughout the paper,  $L$ will be a many-sorted signature with sorts $(\BS_i : i<\alpha)$ (for some ordinal~$\alpha>1$), and $\BS_0$ will be the special sort designated for $\RR$. 
For notational convenience, we will identify $\BS_0$ with $\RR$. 

Hereafter, $(\cD,\le)$ will denote a directed set with least element~$j_0$. 
We will regard $(\cD,\le)$ as a discrete metric structure with sorts $\cD$ and $\mathbb{R}$, and the point $j_0$ will be regarded as the anchor for the sort $\cD$.
We will refer to $(\cD,\le,j_0)$ as a \emph{pointed directed set}.

\begin{definition}[Directed structure]\label{def:directed-structure}
Fix a pointed directed set $(\cD,\le,j_0)$. 
Let $L$ be a many-sorted signature with sorts $(\BS_i : i<\alpha)$ (for some ordinal~$\alpha>1$), where $\BS_0=\RR$.
The sort $\BS_1$ will be called the \emph{directed sort}; it will be denoted by~$\DD$ henceforth. 
Let $L$ include a symbol $\llbracket\cdot\le\cdot\rrbracket$ for a function $\DD\times\DD \to \{0,1\}$.
Let $L$ also include distinct constant symbols $\mathtt{c}_j$, one for each $j\in \cD$.
In addition to the function symbols for the sort metrics and the operations on~$\RR$ plus constants for rational numbers, $L$~may include other function and constant symbols, as well as any other sorts than those already mentioned.
A \emph{$\cD$-directed structure} is any metric $L$-structure $\cM$ such that
\begin{itemize}
\item the sort $\DD^{\cM}$ is discrete, 
\item the interpretation of $\llbracket\cdot\le\cdot\rrbracket$ induces an order on~$\DD^{\cM}$, denoted~$\le$ (by a slight abuse of notation), such that  $(\DD^{\cM},\le,\mathtt{c}_{j_0}^{\cM})$ is an anchored directed set and, for all $i,j\in\DD^{\cM}$, $i\le j$ holds precisely when $\llbracket i\le j\rrbracket = 1$,
\item the map $\cD\to\DD^{\cM}$  defined by $j\mapsto \mathtt{c}_j^{\cM}$ is an order-preserving injection;
\end{itemize}
\end{definition}

It should be clear that the class of $\cD$-directed structures is axiomatizable in the semantics of approximate satisfaction of positive bounded $L$-formulas. 
We note that the embedding $\cD\hookrightarrow\DD^{\cM}$ is usually not surjective.

In order to discuss nets in $\cD$-directed structures, we need to extend the language $L$ with a function symbol $\mathtt{s} : \DD\to\BS_{\iota}$ interpreted as a function from the directed sort~$\DD$ into some other sort~$\BS_{\iota}$ of~$L$. 
In this context, $T$ shall denote any fixed uniform $L[\mathtt{s}]$-theory extending the theory of $\cD$-directed $L$-structures. 
(Extensions of a Henson language are discussed in Section~\ref{sec:metric-structures}.\ref{sec:extensions}.)

\begin{definition}
\label{D: internal net}
Fix a $\cD$-directed $L$-structure~$\cM$, a new function symbol $\mathtt{s} : \DD\to\BS_{\iota}$, and a uniform $L[\mathtt{s}]$-theory~$T$. 
An \emph{(external) net} in~$\cM$ is any $\cD$-net $\ssb = (s_i : i\in\cD)$ taking values in some sort of~$L$. 
An \emph{internal net (modulo~$T$)} is a function $s : \DD^{\cM}\to\BS^{\cM}_{\iota}$ such that $(\cM,s)$ is a model of~$T$. 
\end{definition}
Note that an internal net $s$ yields an external $\cD$-net $\ssb = (s_i : i\in\cD)$, letting $s_i = s(i)$ for $i\in\cD$.

For any rational $\epsilon>0$, $\eta\in\Smpl(\cD)$ and internal net $s = \mathtt{s}^{\cM}$, the $\cD$-net $\ssb$ is strict $[\epsilon,\eta]$-metastable if and only if there exists $i\in \cD$ such that
\begin{equation*}
\cM\appmod \xi^{\eta}_i(\epsilon)\qquad\text{(equivalently,\quad $\cM\models \xi^{\eta}_i(\epsilon)$)},
\end{equation*}
where $\xi_i^{\eta}$ is the positive bounded $L$-formula
\begin{equation}\label{eq:xi}
  \xi^{\eta}_i(\mathtt{t}) : 
\bigwedge_{j,j'\in\eta_i} \big(\dd_{\iota}(\mathtt{s}(j),\mathtt{s}(j')) \le \mathtt{t}\big),
\end{equation}
as follows from the semantics of approximate satisfaction and part~(3) of Proposition~\ref{lem:osc-metastab}.%
\footnote{For notational convenience, in~\eqref{eq:xi} we write $\mathtt{s}_k$ for $\mathtt{s}(\mathtt{c}_k)$, where $\mathtt{c}_k$ is the constant denoting the element $k\in\cD$. 
Similar simplifications will be usually made without comment whenever the intended strict syntax is otherwise clear.}
We emphasize that $\xi_i^{\eta}$ is a \emph{bona fide} $L$-formula since it is a finite conjunction of atomic formulas. 
Nevertheless, the property ``\emph{$\ssb$ is $[\epsilon,\eta]$-metastable}'' is not $L$-axiomatizable since the asserted existence of the witness $i\in\cD$ amounts to an infinite disjunction of formulas when $\cD$ is itself infinite.

For fixed $\eta\in\Smpl(\cD)$ and~$\epsilon > 0$, call $\ssb$ an \emph{$[\epsilon,\eta]$-unstable} $\cD$-net if, for all $i\in\cD$, there exist $j,j'\in\eta_i$ with $\dd_{\iota}(s_j,s_{j'})\ge\epsilon$.
For rational $\epsilon\ge 0$, the assertion ``\emph{$\ssb$ is $[\epsilon,\eta]$-unstable}'' is equivalent to
\begin{equation*}
\cM \appmod \wneg\xi_i^{\eta}(\epsilon)\qquad\text{for all $i\in\cD$,\quad (alternatively,\quad $\cM \models \wneg\xi_i^{\eta}(\epsilon)$)}
\end{equation*}
where 
\begin{equation}\label{eq:notxi}
  \wneg\xi_i^{\eta}(\mathtt{t}) : \bigvee_{j,j'\in\eta_i}
\big(\dd_{\iota}(\mathtt{s}(j),\mathtt{s}(j')) \ge \mathtt{t}\big).
\end{equation}
Note that $[\epsilon,\eta]$-metastability is consistent with $[\epsilon',\eta]$-instability precisely when $\epsilon'\le\epsilon$. 
In contrast to the property of strict metastability, the property of $[\epsilon,\eta]$-instability for given $\eta\in\cD$ and $\epsilon\ge 0$ is axiomatized by the collection
\begin{equation*}
\{\wneg\xi^{\eta}_i(\epsilon'): \text{rational $\epsilon'<\epsilon$ and $i\in\cD$}\}
\end{equation*}
of $L$-sentences, in the semantics of approximate (or discrete) satisfaction.

Recall that if $\mathcal{M}$ is a structure, the complete theory $\Th(\cM)$ of $\mathcal{M}$ is the set of all sentences satisfied by $\mathcal{M}$.
It should be clear from the preceding observations that if $\ssb$ is a $\cD$-net  underlying an internal net~$s$, then classical properties of  $\ssb$, including: \emph{``$\ssb$ is $[\epsilon,\eta]$-metastable''}, \emph{``$\ssb$ is $(\epsilon,\eta)$-metastable''}, \emph{``$\ssb$ is $\eta$-metastable''}, ``$r\le \osc(\ssb)\le s$'', etc., depend only on the complete theory $T$ of~$\cM$, though some of these properties are not axiomatizable.

\begin{proposition}[Axiomatizability of uniform metastability]\label{prop:metastab-1st-ord}
Fix a pointed directed set~$(\cD,\le,j_0)$, a language $L$ for $\cD$-directed structures, a new function symbol $\mathtt{s} : \DD\to\BS_{\iota}$ and a uniform $L[\mathtt{s}]$-theory~$T$. 
For fixed reals $\epsilon > r\ge 0$ and sampling $\eta\in\Smpl(\cD)$, the following properties of the $\cD$-net~$\ssb$ induced by an internal net $s = \mathtt{s}^{\cM}$ are $L[\mathtt{s}]$-axiomatizable (modulo~$T$):
\begin{itemize}
\item ``\emph{$E$ is a rate of $[\epsilon,\eta]$-metastability for~$\ssb$}'', for any fixed $E\in\PfinD$;
\item ``\emph{$\Eb$ is a rate of $(r,\eta)$-metastability for~$\ssb$}'', for any family $\Eb = (E_{\epsilon}: \epsilon>r)$ in $\PfinD$; 
\item ``\emph{$E_{\bullet}$ is a rate of $r$-metastability for~$\ssb$}'', for any family $\Eb = (E_{\epsilon,\eta}: \eta\in\Smpl(\cD), \epsilon>r)$ in $\PfinD$.
\end{itemize}
Hence, each of the preceding properties characterizes an axiomatizable subclass of models of~$T$. 
In fact, in each case the collection of axioms is independent of $T$ and $L$: 
It depends only on the underlying directed set~$\cD$. 
\end{proposition}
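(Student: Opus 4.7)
The plan is to exhibit explicit axiomatizations built from the finite positive bounded formulas $\xi_i^{\eta}(\mathtt{t})$ of~\eqref{eq:xi}, by taking finite disjunctions over potential witnesses and then conjunctions over parameters. The proof of all three clauses reduces to a single base case, namely~(1), from which (2) and (3) follow by mere bookkeeping.

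\textbf{Base case: clause (1).} For fixed $E\in\PfinD$, $\epsilon>0$ and $\eta\in\Smpl(\cD)$, unfolding Definition~\ref{def:unif-metastab} shows that ``\emph{$E$ is a rate of $[\epsilon,\eta]$-metastability for~$\ssb$}'' means exactly that $\cM\models\xi_{i}^{\eta}(\epsilon)$ for some $i\in E$. I would argue that this is equivalent to the approximate satisfaction of the single positive bounded $L[\mathtt{s}]$-sentence
\[
\phi_{E,\epsilon,\eta} \;:\; \bigvee_{i\in E}\xi_i^{\eta}(\epsilon).
\]
One direction is immediate (from the fact that $\cM\models\xi_{i^*}^{\eta}(\epsilon)$ implies $\cM\models\xi_{i^*}^{\eta}(\epsilon')$ for every $\epsilon'>\epsilon$). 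For the converse, suppose $\cM\appmod\phi_{E,\epsilon,\eta}$ but $\xi_i^{\eta}(\epsilon)$ fails in~$\cM$ for every $i\in E$, so that the quantity $\delta_i\coloneqq\max_{j,j'\in\eta_i}\dd_{\iota}(s_j,s_{j'})-\epsilon$ is strictly positive. Since $E$ is finite, $\delta\coloneqq\min_{i\in E}\delta_i>0$. Taking the approximation in which every instance of ``$\le\epsilon$'' is replaced by ``$\le\epsilon+\delta/2$'', no disjunct holds, contradicting approximate satisfaction. Thus $\{\phi_{E,\epsilon,\eta}\}$ axiomatizes~(1).

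\textbf{Clauses (2) and (3) as conjunctions.} For~(2), fix $r\ge 0$, $\eta\in\Smpl(\cD)$ and $\Eb=(E_{\epsilon}:\epsilon>r)$. The property says that for \emph{every} $\epsilon>r$, $E_{\epsilon}$ witnesses clause~(1), so an axiomatization is
\[
\Sigma_{\Eb,r,\eta}\;=\;\{\phi_{E_{\epsilon},\epsilon,\eta}:\epsilon>r\}.
\]
For~(3), fix $r\ge 0$ and $\Eb=(E_{\epsilon,\eta})$. Slice along $\eta\in\Smpl(\cD)$ and apply clause~(2) to each slice $\Eb_{\eta}=(E_{\epsilon,\eta}:\epsilon>r)$; an axiomatization is then
\[
\bigcup_{\eta\in\Smpl(\cD)}\Sigma_{\Eb_{\eta},r,\eta}.
\]
By inspection of~\eqref{eq:xi}, every $\phi_{E,\epsilon,\eta}$ uses only the constants $\mathtt{c}_j$ ($j\in\cD$), the function symbol $\mathtt{s}$, and the metric $\dd_{\iota}$---all part of the fixed signature data of a $\cD$-directed structure with an internal net. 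Hence the resulting axiom sets depend only on the underlying directed set~$\cD$, as asserted, and in particular are independent of the theory~$T$ and of any other symbols of~$L$.

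\textbf{Main obstacle.} The only nontrivial step is the base case, specifically the verification that approximate satisfaction of the finite disjunction $\phi_{E,\epsilon,\eta}$ coincides with the exact witness-existence property. Finiteness of~$E$ is what enables the uniform-$\delta$ pigeonhole argument above; if $E$ were infinite, the same scheme would at best axiomatize a lax ($(\epsilon,\eta)$-metastability) variant. Everything after this observation is a straightforward assembly of conjunctions.
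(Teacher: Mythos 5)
Your approach matches the paper's: both use the finite disjunctions $\xi^{\eta}_E=\bigvee_{i\in E}\xi^{\eta}_i$ as the basic building block, and clauses (2) and (3) are assembled from instances of clause (1) by taking conjunctions over the relevant index sets. Your explicit pigeonhole verification of the base case (using finiteness of $E$ and of each $\eta_i$) is correct in spirit and supplies a detail the paper leaves to the reader. There is, however, one genuine technical gap.

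The proposition is stated for a fixed \emph{real} $\epsilon > r\ge 0$, but your proposed axiom $\phi_{E,\epsilon,\eta}=\bigvee_{i\in E}\xi^{\eta}_i(\epsilon)$ is a positive bounded $L[\mathtt{s}]$-sentence only when $\epsilon$ is \emph{rational}: the signature supplies a constant $\mathtt{c}_q$ only for rational $q$, and atomic formulas $t\le q$ require $q$ rational. The same issue propagates to your clauses (2) and (3): $\Sigma_{\Eb,r,\eta}=\{\phi_{E_{\epsilon},\epsilon,\eta}:\epsilon>r\}$ ranges over all real $\epsilon>r$, so most of its putative members are not formulas at all (and you cannot simply restrict to rational $\epsilon$, since $E_{\epsilon}$ varies with~$\epsilon$). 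This is precisely why the paper's proof interposes a rational bound: it axiomatizes clause~(1) by the scheme $\{\xi^{\eta}_E(\epsilon'):\text{rational }\epsilon'>\epsilon\}$, a legitimate set of sentences for any real $\epsilon$, and similarly $\{\xi^{\eta}_{E_{\epsilon}}(\epsilon'):\epsilon>r,\ \text{rational }\epsilon'>\epsilon\}$ for clause~(2). For rational $\epsilon$ your single sentence and the paper's scheme have exactly the same models under approximate satisfaction (because every approximation of $\xi^{\eta}_E(\epsilon)$ already involves rational bounds $>\epsilon$, and $E$ and each $\eta_i$ are finite), so your argument, once the ad hoc constant ``$\epsilon+\delta/2$'' in your pigeonhole step is replaced by a rational in $(\epsilon,\epsilon+\delta)$, establishes that equivalence cleanly. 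But as written the proof does not cover irrational parameters, and you should pass to the rational scheme as the paper does.
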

\begin{proof}
For $E\in\PfinD$ and $\eta\in\cD$, let
\begin{equation*}
  \xi^{\eta}_E = \bigvee_{i\in E}\xi^{\eta}_i\quad
\text{with $\xi^{\eta}_i$ given by~\eqref{eq:xi}.}
\end{equation*}
The first stated property is axiomatized by the formulas $\xi^{\eta}_{E}(\epsilon')$ for rational $\epsilon' > \epsilon$, the second by all formulas $\xi^{\eta}_{E_{\epsilon}}\!(\epsilon')$ for rational $\epsilon' > \epsilon$, and the third by all formulas $\xi^{\eta}_{E_{\epsilon,\eta}}\!(\epsilon')$ for  $\epsilon > r$, rational $\epsilon' > \epsilon$, and all $\eta\in\Smpl(\cD)$.
\end{proof}

\begin{proposition}[Uniform Metastability Principle]
\label{prop:U-metastab}
Fix a pointed directed set~$(\cD,\le,j_0)$, a language $L$ for $\cD$-directed structures, a new function symbol $\mathtt{s} : \DD\to\BS_{\iota}$ and a uniform $L[\mathtt{s}]$-theory~$T$. 
Let $\eta\in\Smpl(\cD)$ and $r\ge 0$. 
If $\ssb$ is $(r,\eta)$-metastable whenever $(\cM,s)\appmod T$, then there exists a collection $\Eb = (E_{\epsilon} : \epsilon>r)$ in~$\PfinD$ such that $\Eb$ is a rate of $(r,\eta)$-metastability for~$\ssb$, uniformly over all models $(\cM,s)$ of~$T$.
\end{proposition}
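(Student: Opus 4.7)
The plan is to argue by contradiction using the Compactness Theorem~\ref{T:compactness}, constructing $E_\epsilon$ separately for each $\epsilon > r$ and then collecting the family $\Eb = (E_\epsilon : \epsilon > r)$. So fix $\epsilon > r$ and suppose, toward a contradiction, that no $E \in \PfinD$ serves as a uniform rate of $[\epsilon,\eta]$-metastability for $\ssb$ across all models $(\cM,s) \appmod T$. Choose rational parameters $\epsilon_2, \epsilon_1$ with $r < \epsilon_2 < \epsilon_1 < \epsilon$; these intermediate values will absorb the boundary mismatch between the $\le$ appearing in $\xi_i^\eta$ and the $\ge$ appearing in $\wneg\xi_i^\eta$.

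Next I would introduce the expanded theory
\begin{equation*}
T' \;=\; T \,\cup\, \{\wneg\xi_i^\eta(\epsilon_1) : i \in \cD\}
\end{equation*}
over the same signature $L[\mathtt{s}]$. To show $T'$ is approximately finitely satisfiable, consider any finite fragment, which only involves $\wneg\xi_i^\eta(\epsilon_1)$ for $i$ ranging over some $E \in \PfinD$. By the contradiction hypothesis applied to this $E$, there is $(\cM,s) \appmod T$ for which $E$ fails to witness $[\epsilon,\eta]$-metastability, i.e., for each $i \in E$ one has $\osc_{\eta_i}(\ssb) > \epsilon > \epsilon_1$; finiteness of $\eta_i$ yields $j, j' \in \eta_i$ with $\dd_{\iota}(s_j,s_{j'}) > \epsilon_1$, so $(\cM,s)$ approximately satisfies each $\wneg\xi_i^\eta(\epsilon_1)$ in view of~\eqref{eq:notxi}. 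By Compactness, $T'$ has a model $(\cM^*,s^*)$.

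From $(\cM^*,s^*) \appmod \{\wneg\xi_i^\eta(\epsilon_1) : i \in \cD\}$ I deduce that $\ssb^*$ is $[\epsilon_1,\eta]$-unstable: for every $i \in \cD$ there exist $j,j' \in \eta_i$ with $\dd_{\iota}(s_j^*, s_{j'}^*) \ge \epsilon_1$. On the other hand, $(\cM^*,s^*) \appmod T$, so the hypothesis of the proposition forces $\ssb^*$ to be $(r,\eta)$-metastable, and in particular $[\epsilon_2,\eta]$-metastable since $\epsilon_2 > r$. Thus there exists $i \in \cD$ with $\osc_{\eta_i}(\ssb^*) \le \epsilon_2$, so every pair $j, j' \in \eta_i$ satisfies $\dd_{\iota}(s_j^*, s_{j'}^*) \le \epsilon_2 < \epsilon_1$. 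This contradicts $[\epsilon_1,\eta]$-instability at this particular~$i$, completing the argument.

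The main delicate point, and the reason for sliding in two auxiliary parameters $\epsilon_2 < \epsilon_1 < \epsilon$, is reconciling the two inequality directions inherent to the approximate semantics: the failure of $[\epsilon,\eta]$-metastability only yields \emph{strict} inequalities $\dd_{\iota}(s_j, s_{j'}) > \epsilon$, while $\wneg\xi_i^\eta$ captures non-strict ones; conversely the $(r,\eta)$-metastability hypothesis only guarantees $[\epsilon',\eta]$-metastability for $\epsilon' > r$, not at $\epsilon' = r$ itself. Once this bookkeeping is handled, the proof is an essentially routine application of Compactness to the axiomatization provided by Proposition~\ref{prop:metastab-1st-ord}.
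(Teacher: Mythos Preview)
Your proof is correct and follows essentially the same route as the paper's: a contradiction argument obtained by applying the Compactness Theorem to $T$ together with the instability axioms $\{\wneg\xi_i^{\eta}(\cdot) : i\in\cD\}$, yielding a model that is simultaneously $(r,\eta)$-metastable and $[\epsilon',\eta]$-unstable for some $\epsilon'>r$. The only cosmetic difference is that you introduce two auxiliary rationals $r<\epsilon_2<\epsilon_1<\epsilon$ to manage the strict/non-strict inequality bookkeeping explicitly, whereas the paper handles this more tersely by working directly with the semantics of approximate satisfaction at a single rational $\epsilon>r$.
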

Loosely speaking, if all internal sequences are metastable, then they are uniformly me\-tas\-table. 
(Of course the notion of internal sequence is in reference to a fixed uniform theory~$T$.)
On the other hand, the uniform rate $\Eb$ is certainly dependent on~$T$. 
The uniform theory $T$ implies an upper bound $C\ge 0$ such that $s(j)\in(\BS_{\iota})^{[C]}$ for all $j\in\DD^{\cM}$ whenever $(\cM,s)\appmod T$.

\begin{proof}
It is enough to show that a bound $E_{\epsilon}$ on the rate of $[\epsilon,\eta]$-metastability exists for each rational $\epsilon>r$, uniformly for all $\ssb$ arising from $\mathtt{s}^{\cM}$ for arbitrary~$\cM\appmod T$. 
Assume no such $E_{\epsilon}$ exists for some $\epsilon > r$.
With the notation of Proposition~\ref{prop:metastab-1st-ord}, given $E\in\PfinD$ there exists $(\cM,s)\appmod T$ and some rational $\epsilon'>\epsilon$ such that $(\cM,s)\not\appmod \xi_E^{\eta}(\epsilon')$. 
Consequently, $(\cM,s)\appmod \wneg\xi_E^{\eta}(\epsilon) : \bigwedge_{i\in E} \wneg\xi_i^{\eta}(\epsilon)$, with $\wneg\xi_i^{\eta}$ as in~(\ref{eq:notxi}). 
Thus, the collection 
  \begin{equation*}
X_{\epsilon}^{\eta} = \{\wneg\xi_i^{\eta}(\epsilon) : i\in\cD\}
\end{equation*}
is finitely jointly satisfiable with~$T$. 
By the Compactness Theorem~\ref{T:compactness}, there exists a model $(\cM,s)$ of  $T\cup X_{\epsilon}^{\eta}$. 
On the one hand, $\ssb$ is $[\epsilon,\eta]$-unstable because $\cM\appmod X_{\epsilon}^{\eta}$; 
on the other hand, $\ssb$ is  $(r,\eta)$-metastable by hypothesis, since $(\cM,s)\appmod T$. 
This is a contradiction since $r<\epsilon$. 
\end{proof}

The Uniform Metastability Principle may be formulated as the following dichotomy: 
For a fixed uniform $L[\mathtt{s}]$-theory~$T$, real $r\ge 0$ and $\eta\in\Smpl(\cD)$,
\begin{itemize}
\item Either: There exists a rate $\Eb = (E_{\epsilon} : \epsilon>r)$ of $(r,\eta)$-metastability for $\ssb$ valid uniformly for all models $(\cM,s)$ of~$T$;
\item Or else: There exists a model $(\cM,s)$ of~$T$ such that $\ssb$ fails to be $(r,\eta)$-metastable.
\end{itemize}

Below, we state a form of the Uniform Metastability Principle that applies when many internal nets are realized in the same structure. 
Fix a pointed directed set~$(\cD,\le,j_0)$ and a signature~$L$ for $\cD$-directed structures. 
Extend $L$ with function symbols $\mathtt{s} : \DD\to\BS_{\iota}$ $\boldsymbol{\sigma} : \BA\times\DD\to\BS_{\iota}$ where $\BA$ is of the form $\BS_{i_1}\times\dots\times\BS_{i_n}$ (a Cartesian product of sorts of~$L$) and $\BS_{\iota}$ is any sort of~$L$. 
We will treat $\BA$ itself as a sort, writing $\BA^{\cM}$ for $\BS_{i_1}^{\cM}\times\dots\times\BS_{i_n}^{\cM}$ in any $L$-structure~$\cM$.
If $(\cM,\sigma,s)$ is an $L[\boldsymbol{\sigma},\mathtt{s}]$-structure, we will say that~$s$ \emph{admits $\sigma$-parameters of size $\le C$} if there exists $\overline{a}\in(\BA^{\cM})^{[C]}$ such that $s(j) = \sigma(\overline{a},j)$ for all $j\in\DD^{\cM}$. 
The slightly more general notion that $s$~\emph{approximately admits $\sigma$-parameters of size $\lesssim C$} means that, for all $\epsilon > 0$, there exists $\overline{a}\in(\BA^{\cM})^{[C+\epsilon]}$ such that $\dd(s(j),\sigma(\overline{a},j))\le \epsilon$ for all $j\in\DD^{\cM}$. 
If $C$ is rational, the latter property of $L[\boldsymbol{\sigma},\mathtt{s}]$-structures~$(\cM,\sigma,s)$ is captured by a single axiom
\begin{equation}\label{eq:axiom-params-C}
\upsilon_C : 
(\exists_C\overline{\mathtt{a}})(\forall j)
(\dd(\mathtt{s}(j),\boldsymbol{\sigma}(\overline{\mathtt{a}},j))\le 0).
\end{equation}
In general, the property is axiomatized by the scheme $\{\upsilon_D : \text{rational $D>C$}\}$.

\begin{corollary}[Uniform metastability of parametrized sequences]\label{cor:UMP-paramd}
Fix a pointed directed set~$(\cD,\le,j_0)$ and a signature~$L$ for $\cD$-directed structures. 
Extend $L$ with function symbols $\mathtt{s} : \DD\to\BS_{\iota}$ and $\boldsymbol{\sigma} : \BA\times\DD\to\BS_{\iota}$ where $\BA$ is of the form $\BS_{i_1}\times\dots\times\BS_{i_n}$ (a Cartesian product of sorts of~$L$) and $\BS_{\iota}$ is any sort of~$L$.
Fix a rational $C\ge 0$ and a uniform $L[\boldsymbol{\sigma},\mathtt{s}]$-theory~$T$ such that $s$ approximately admits $\sigma$-parameters of size~$\lesssim C$ for every model $(\cM,\sigma,s)\appmod T$ (i.e., $T\appmod\upsilon_D$ for all rational $D>C$ with $\upsilon_D$ defined in~\eqref{eq:axiom-params-C} above).
For some $r\ge 0$ and $\eta\in\Smpl(\cD)$, assume that the external $\cD$-net $\ssb$ is $(r,\eta)$-metastable whenever $(\cM,\sigma,s)\appmod T$. 
Then there exists a collection $\Eb^r = (E_{\epsilon} : \epsilon>r)$ in~$\PfinD$ such that $\ssb$ is $\Eb$-uniformly $(r,\eta)$-metastable whenever $(\cM,\sigma,s)\appmod T$. 
Moreover, $\Eb$ depends only on~$T$.
\end{corollary}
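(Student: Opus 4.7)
The plan is to deduce Corollary~\ref{cor:UMP-paramd} as a direct application of the Uniform Metastability Principle (Proposition~\ref{prop:U-metastab}) after absorbing the parameter function symbol $\boldsymbol{\sigma}$ into the base language. The key observation is that, once the base signature is enlarged appropriately, the hypotheses and conclusion of the corollary coincide with those of Proposition~\ref{prop:U-metastab} modulo a relabeling; no genuinely new argument is required, only a bit of bookkeeping.

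Concretely, I would set $L' := L[\boldsymbol{\sigma}]$ and first check that $L'$ remains a signature for $\cD$-directed structures, since the directed sort~$\DD$, the relation symbol $\llbracket\cdot\le\cdot\rrbracket$, and the constants $\mathtt{c}_j$ ($j\in\cD$) are all inherited unchanged from~$L$ and the new symbol $\boldsymbol{\sigma}$ does not interfere with any of the defining clauses of Definition~\ref{def:directed-structure}. Any model $(\cM,\sigma,s)\appmod T$ can then be regarded as a pair $(\cM',s)$ where $\cM' := (\cM,\sigma)$ is an $L'$-structure and $s : \DD^{\cM'}\to\BS_{\iota}^{\cM'}$ is an internal net for the uniform $L'[\mathtt{s}]$-theory~$T$; the parameterization axioms $\{\upsilon_D : \text{rational } D > C\}$ from~\eqref{eq:axiom-params-C} are simply retained as axioms of $T$ in the signature $L'[\mathtt{s}]$. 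The hypothesis of the corollary then reads verbatim as: for every model $(\cM',s)\appmod T$, the external $\cD$-net~$\ssb$ is $(r,\eta)$-metastable. Applying Proposition~\ref{prop:U-metastab} to the pair $(L',T)$ produces a collection $\Eb = (E_{\epsilon} : \epsilon > r) \subset \PfinD$ that uniformly witnesses $(r,\eta)$-metastability of $\ssb$ across all models of~$T$, and the fact that $\Eb$ depends only on~$T$ is built into the compactness argument that establishes the Proposition.

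No substantive obstacle arises in this argument; the heart of the matter---the compactness argument using the formulas $\xi_i^{\eta}$ and $\wneg\xi_i^{\eta}$ of~\eqref{eq:xi} and~\eqref{eq:notxi}---has already been carried out in the proof of Proposition~\ref{prop:U-metastab}. The one point requiring attention is bookkeeping: verifying that the two-step extension $L\to L[\boldsymbol{\sigma}]\to L[\boldsymbol{\sigma},\mathtt{s}]$ is equivalent to the one-step extension $L'\to L'[\mathtt{s}]$ for the purposes of the definitions of \emph{uniform theory} and \emph{internal net} used in Section~\ref{sec:nets-in-structures}, and that uniformity of $T$ in the original signature transfers without loss to uniformity in the new one. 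The parameterization hypothesis plays no essential role in the deduction itself---it is merely a natural way in which such uniform theories arise in practice (for instance in the Dominated Convergence application of Section~\ref{sec:DCT}), and it is preserved automatically under the relabeling. With that verification in hand, the corollary follows immediately from Proposition~\ref{prop:U-metastab}.
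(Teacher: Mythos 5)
Your proposal is correct and takes essentially the same route as the paper's own (very terse) proof: the paper simply invokes Proposition~\ref{prop:U-metastab} in its dichotomy form, implicitly treating $L[\boldsymbol{\sigma}]$ as the base language for $\cD$-directed structures, whereas you spell out that bookkeeping step explicitly. Your side observation that the parameterization hypothesis ($T\appmod\upsilon_D$ for $D>C$) plays no role in the deduction itself is also accurate---it characterizes how $T$ will arise in practice (e.g.\ in Section~\ref{sec:DCT}) and is the source of the remark in the paper's proof that $\Eb$ implicitly depends on~$C$, but it is not used to establish existence of~$\Eb$.
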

As a simple application of saturation, the notions \emph{``$s$ admits $\sigma$-parameters of size $\le C$''} and \emph{``$s$ approximately admits $\sigma$-parameters of size $\lesssim C$''} are seen to be equivalent in any $\omp$-saturated $L[\boldsymbol{\sigma},\mathtt{s}]$-structure~$(\cM,\sigma,s)$.
Since every model of $T$ admits an $\omega^+$-saturated elementary extension, the hypotheses of Corollary~\ref{cor:UMP-paramd} may be relaxed to state that the external $\cD$-nets $\ssb$ are $(r,\eta)$-metastable whenever $s$ admits (exact) parameters of size~$\le C$ via~$\sigma$ (without changing the conclusion).

\begin{proof}
By the Uniform Metastability Principle~\ref{prop:U-metastab}, if the collection $\Eb$ did not exist, there would be a model $(\cM,\sigma,s)$ of~$T$ such that $\ssb$ is not $(r,\eta)$-metastable, contradicting the hypotheses. 
Hence, the metastability rate~$\Eb$ must exist, and depends only on~$T$.
(Note that $\Eb$ implicitly depends on~$C$.)
\end{proof}

\section{Loeb structures}
\label{sec:Loeb}

In this section, $L$ will be a many-sorted signature with sorts $(\BS_i : i<\alpha)$ (for some ordinal~$\alpha$), where $\BS_0 = \RR$ (as convened), $\BS_1 = \Omega$, and $\BS_2 = \cA$. 
The sort $\Omega$ will be interpreted as a set with anchor point~$\omega_0$, and $ \cA$ will be interpreted as a Boolean algebra with its least element as anchor point.

Recall that, by default, $L$ comes equipped with the following symbols:
\begin{itemize}
\item a binary function symbol $\dd_i:\BS_i\times\BS_i\to\RR$ ($i<\alpha$) for the metric of $\BS_i$ (we may dispense with $\dd_0$ since $\dd_0(x,y) = |y-x|$ for $x,y\in\RR$, and we shall denote $\dd_1$ by $\dd_{\Omega}$, $\dd_2$ by $\dd_{\cA}$);
\item binary symbols $+_{\RR}$, $-_{\RR}$, $\cdot_{\RR}$ for the operations of addition, subtraction, multiplication of~$\RR$ and a monadic function symbol $\absR{\cdot}$ for the absolute value of~$\RR$;
\item binary symbols $\wedge_{\RR}$ (``minimum'') and $\vee_{\RR}$ (``maximum'') for the lattice operations of~$\RR$;
\item an $\RR$-valued constant $\mathtt{c}_r$ for each rational number $r\in\RR$.
\end{itemize}
We will require that, in addition to these symbols, the signature $L$ include the following:
\begin{itemize}
\item a symbol for the \emph{measure}:
\begin{equation*}
  \boldsymbol{\mu} : \cA \to \RR;
\end{equation*}
\item function symbols
\[
\cup: \cA\times\cA \to \cA, \qquad \cap:\cA\times\cA\to \cA,\qquad\compl{}:\cA \to \cA
\]
for the union (join), intersection (meet) and complementation operations of the interpretation of~$\cA$;
\item a symbol $\llbracket \cdot \in \cdot \rrbracket:\Omega\times\cA \to \{0,1\}$;
\item $\cA$-valued constants $\varnothing$, $\mathtt{\Omega}$ for the zero (null element) and unity (universal element) of the interpretation of~$\cA$;
\item an $\Omega$-valued constant $\mathtt{c}_{\omega_0}$ for the distinguished element $\omega_0$ of the interpretation of $\Omega$.
\end{itemize}
The signature $L$ may include other function and constant symbols, as well as many other sorts than those already mentioned.

In order to simplify the notation, we normally omit the subscripts in the operations $+_{\RR}$, $-_{\RR}$, $\cdot_{\RR}$,  $\wedge_{\RR}$, $\vee_{\RR}$, and $\absR{\cdot}$  of~$\RR$. 
Also, for a rational number $r$, we denote the constant $\mathtt{c}_r$ simply as $r$.

If $\cM$ is a fixed $L$-structure,  we will denote the sorts of $\cM$ corresponding to $\cA$ and $\Omega$ also as $\cA$ and $\Omega$. 
Moreover, if the context allows it, we will identify the symbols of $L$ with their interpretation in $\cM$; thus, for instance, we denote $\cup^{\cM}$ and $\mathtt{\Omega}^{\cM}$ simply as $\cup$ and $\mathtt{\Omega}$ (respectively). 

\begin{definition}[Loeb structure]\label{def:Loeb-struct}
A signature $L$ as described above will henceforth be called a \emph{Loeb signature}. 
A \emph{pre-Loeb finite measure structure} is an $L$-structure~$\cM$ such that: 
\begin{itemize}
\item the metrics $\dd_{\Omega}$ on $\Omega$ and $\dd_{\cA}$ on $\cA$ are discrete;
\item $\llbracket\cdot\in\cdot\rrbracket$ is a $\{0,1\}$-valued function identifying ($\cA$, $\varnothing$, $\mathtt{\Omega}$, $\cup$, $\cap$, $\compl{\cdot}$) with an algebra of subsets of~$\Omega$, i.e., for all $A,B\in\cA$ and $\omega\in\Omega$:
  \begin{itemize}
  \item $\dd_{\cA}(A,B) = \sup_{x\in\Omega}\bigl|\llbracket x\in A\rrbracket - \llbracket x\in B\rrbracket\bigr|$;
  \item $\llbracket\omega\in\varnothing\rrbracket = 0$;
  \item $\llbracket\omega\in\mathtt{\Omega}\rrbracket = 1$;
  \item $\llbracket\omega\in A\cup B\rrbracket = \llbracket\omega\in A\rrbracket \vee \llbracket\omega\in B\rrbracket$;
  \item $\llbracket\omega\in A\cap B\rrbracket = \llbracket\omega\in A\rrbracket \wedge \llbracket\omega\in B\rrbracket$;
  \item $\llbracket\omega\in A\rrbracket + \llbracket\omega\in\compl{A}\rrbracket = 1$;
  \end{itemize}
\item the interpretation $\mu$ of $\boldsymbol{\mu}$ is a finitely additive measure on~$\cA$.
For some $C\ge 0$:
  \begin{itemize}
  \item $\mu(\varnothing) = 0$ and $0 \le \mu(A) \le \mu(\mathtt{\Omega}) \eqqcolon \|\mu\| \le C$ for all $A\in\cA$;
  \item $\mu(A\cup B) + \mu(A\cap B) = \mu(A) + \mu(B)$ for all $A,B\in\cA$.
  \end{itemize}  
\end{itemize}

In a \emph{pre-Loeb signed measure structure}~$\cM$, the last axioms are replaced by:
\begin{itemize}
\item For some $C\ge 0$, $\mu = \boldsymbol{\mu}^{\cM}$ is a signed measure on~$\cA$ with total variation at most~$C$:
  \begin{itemize}
  \item $\mu(\varnothing) = 0$ and $\mu(A\cup B) + \mu(A\cap B) = \mu(A)+\mu(B)$ for all $A,B\in\cA$;
  \item $\|\mu\| \coloneqq \sup\{|\mu(A)|+|\mu(B)|-|\mu(A\cap B)| : A, B \in \cA\} \le C$.
  \end{itemize}
\end{itemize}
A \emph{pre-Loeb probability structure} is a pre-Loeb finite measure structure $\cM$ with $\|\mu\| = 1$.
A \emph{Loeb probability (finite measure, signed measure) structure} is any $\lamp$-saturated pre-Loeb probability (finite measure, signed measure) structure, where $\lambda = \card(L)$.
\end{definition}
Clearly, a positive pre-Loeb structure is a signed pre-Loeb structure.
It can be easily verified that the equality $\|\mu\| =  \sup\{|\mu(A)|+|\mu(B)|-|\mu(A\cap B)| : A, B \in \cA\}$ holds in any pre-Loeb measure structure (probability, finite or positive). 
On the other hand, in signed measure structures only the inequality $|\mu(\Omega)| \le \|\mu\|$ holds in general.

Recall (Definition~\ref{D:model}) that if $L'$ is a signature, a class $\mathcal{C}$ of metric structures is said to be $L'$-axiomatizable if it consists of the models of a (positive bounded) $L'$-theory.

\begin{proposition}\label{prop:Loeb-1st-order}
Let~$L$ be a signature for Loeb structures. 
For every fixed $C\ge 0$, the class of all pre-Loeb probability (finite measure, signed measure) $L$-structures $\cM$ such that $\|\mu\|\le C$ is axiomatizable. 
\end{proposition}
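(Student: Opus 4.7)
The plan is to produce, for each variant of pre-Loeb structure (probability, finite measure, signed measure) and each fixed $C\ge 0$, an explicit positive bounded $L$-theory whose class of approximate models is precisely the class described in Definition~\ref{def:Loeb-struct} with $\|\mu\|\le C$. Since the three variants differ only in the positivity, normalization, and norm-bound axioms, I would handle them uniformly, flagging the variant-specific replacements as I go.

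For the structural clauses, the discreteness of $\dd_\Omega$ and $\dd_\cA$ is captured by the schema
\[
(\forall_r x)(\forall_r y)\bigl(\dd_i(x,y)\le 0 \;\vee\; \dd_i(x,y)\ge 1\bigr),\qquad i\in\{1,2\},\; r\in\QQ_{>0},
\]
whose approximate models are exactly the $L$-structures in which $\dd_i$ takes values in $\{0\}\cup[1,\infty)$ (as one sees by letting the weakening tolerances tend to zero). The Boolean-algebra identities on $\cA$ and the compatibility equations linking $\llbracket\cdot\in\cdot\rrbracket$ with $\varnothing$, $\mathtt{\Omega}$, $\cup$, $\cap$, and $\compl{\cdot}$ are already stated as universally quantified atomic equations, hence already positive bounded. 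The identity $\dd_\cA(A,B) = \sup_x|\llbracket x\in A\rrbracket - \llbracket x\in B\rrbracket|$ splits into the universal inequality $(\forall A,B,x)(\dd_\cA(A,B)\ge |\llbracket x\in A\rrbracket - \llbracket x\in B\rrbracket|)$ together with a schema of bounded existential disjunctions providing distinguishing witnesses; combined with the discreteness axioms already in place, this pins down the sup. The measure clauses are equally routine: $\boldsymbol\mu(\varnothing)=0$, finite additivity, positivity (in the positive cases), and normalization $\boldsymbol\mu(\mathtt{\Omega})=1$ (in the probability case) are atomic universal sentences, while the bound $\|\mu\|\le C$ is $(\forall A)(\boldsymbol\mu(A)\le C)$ in the positive case and $(\forall A,B)(|\boldsymbol\mu(A)|+|\boldsymbol\mu(B)|-|\boldsymbol\mu(A\cap B)|\le C)$ in the signed case.

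The main obstacle is verifying that the discreteness schema and the existential half of the sup-identity for $\dd_\cA$, once interpreted under the approximate-satisfaction semantics of positive bounded logic, cut out exactly the pre-Loeb condition rather than something weaker. This is a careful application of the semantics developed in Section~\ref{sec:metric-structures}: each disjunct in a discreteness or witness-existence axiom is built from atomic inequalities whose weakenings collapse, in the limit, to the sharp inequalities of Definition~\ref{def:Loeb-struct}. Assembling all the axioms above produces a positive bounded $L$-theory whose approximate models are exactly the pre-Loeb structures of the given type with $\|\mu\|\le C$, completing the proof.
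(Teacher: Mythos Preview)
Your proposal is correct and follows the same approach as the paper. The paper's own proof explicitly declares the verification a ``routine exercise'' and works out only the single example $\|\mu\|\le C$ via $(\forall A,B)(|\boldsymbol{\mu}(A)| + |\boldsymbol{\mu}(B)| - |\boldsymbol{\mu}(A\cap B)| \le C)$, leaving the remaining clauses to the reader; you have simply supplied more of those details, which is fine.
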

(The assumption $C=1$ in the case of probability structures is tacit.)
\begin{proof}

Given $C\ge 0$, we have to show that the clauses of Definition~\ref{def:Loeb-struct} are equivalent to the (approximate) satisfaction of positive bounded $L$-sentences. 
Clearly, it suffices to do so under the assumption that $C$ is rational (if $C$ is irrational, simply take the union of all axiom schemes for rational $D>C$).
This is a routine exercise, so we give only one example. 
The condition $\|\mu\|\le C$ amounts to the approximate satisfaction of the sentence%
\footnote{We remark that the value $D = \|\boldsymbol{\mu}\|$ is itself uniquely characterized by the formula
$(\forall A,B)(|\boldsymbol{\mu}(A)| + |\boldsymbol{\mu}(B)| - |\boldsymbol{\mu}(A\cap B)| \le D)
\wedge
(\exists A,B)(|\boldsymbol{\mu}(A)| + |\boldsymbol{\mu}(B)| - |\boldsymbol{\mu}(A\cap B)| \ge D)$.}
\begin{equation*}
  (\forall A,B)(|\boldsymbol{\mu}(A)| + |\boldsymbol{\mu}(B)| - |\boldsymbol{\mu}(A\cap B)| \le C).
\end{equation*}
The reader is invited to write down formulas axiomatizing the remaining clauses. 
\end{proof}

\begin{definition}\label{def:ext-Loeb-alg}
  For any pre-Loeb structure~$\cM$, define
  \begin{equation*}
    \begin{split}
\cA^{\cM} &\to \mathcal{P}(\Omega^{\cM}) \\
    A &\mapsto [A]^{\cM} = \{\omega\in\Omega^{\cM} :  \llbracket\omega\in A\rrbracket = 1\}.
    \end{split}
  \end{equation*}
The collection
  \begin{equation*}
    [\cA]^{\cM} = \{[A]^{\cM} : A\in\cA\}
  \end{equation*}
of subsets of~$\Omega^{\cM}$ is the \emph{induced (external) algebra in~$\Omega^{\cM}$.}
It follows from the definition of pre-Loeb structure that $[\cA]^{\cM}$ is an algebra of subsets of~$\Omega^{\cM}$
(the map $A \mapsto [A]^{\cM}$ is an isomorphism between the Boolean algebras $\cA$ and $[\cA]^{\cM}$).

The \emph{(external) measure induced by~$\mu$ on~$\cM$} is the real-valued function
\begin{equation*}
  \begin{split}
[\mu]^{\cM} : [\cA]^{\cM} &\to \RR \\
    [A]^{\cM} &\mapsto \mu(A).
  \end{split}
\end{equation*}
$[\mu]^{\cM}$ is well defined because $A\mapsto[A]^{\cM}$ is injective.

When $\cM$ is fixed, we usually write $[\mu]$, $[\cA]$, $[A]$ for $[\mu]^{\cM}$, $[\cA]^{\cM}$, $[A]^{\cM}$. 
We may also write $[\Omega]$ instead of $\Omega^{\cM}$, for emphasis.
\end{definition}

\begin{proposition}\label{prop:Loeb-measure}
Fix a Loeb structure~$\cM$ and let $\AL$ be the $\sigma$-algebra of subsets of $\Omega$ generated by $[\cA]$.
  \begin{enumerate}[\normalfont(1)]
  \item The function $[\mu]$ extends to a unique countably additive positive [signed] measure~$\muL$ on~$\AL$ with total variation $\var(\muL) = \|\mu\|$. 
  \item For all $S\in\AL$ and $\epsilon>0$ there exist $A,B\in\cA$ such that $[A]\subset S\subset [B]$ and $\|\mu\restriction (B\setminus A)\| \le \epsilon$, where
\vspace{-0.5cm}
    \begin{equation*}
      \begin{split}
        \mu\restriction X : \cA &\to \RR\\
Y &\mapsto \mu(Y\cap X)
      \end{split}
    \end{equation*}
for any $X\in\cA$.
\item For every $S\in\AL$ there exists $A\in\cA$ such that $|\muL|(S\triangle[A])=0$, where $X\triangle Y = (X\setminus Y)\cup(Y\setminus X)$ is the symmetric set difference and $|\muL|$ is the (nonnegative) absolute measure of~$\muL$.
  \end{enumerate}
\end{proposition}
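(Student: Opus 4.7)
The plan is to reduce the whole proposition to one nontrivial input---countable additivity of the finitely additive set function $[\mu]$ on the algebra $[\cA]\subseteq\cP(\Omega^{\cM})$---and then appeal to Carath\'eodory together with two further applications of $\lamp$-saturation. With countable additivity in hand, claim~(1) follows from the Carath\'eodory extension theorem plus uniqueness; the identity $\var(\muL)=\|\mu\|$ will drop out once the approximation claim~(2) is in place, since every $\AL$-partition of~$\Omega$ can then be matched up to~$\epsilon$ by a partition drawn from~$[\cA]$. Claim~(3) will follow from~(2) by one more saturation argument.

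The heart of the proof is the passage from finite to countable additivity of $[\mu]$ on $[\cA]$. I would show: whenever $(A_n)_{n\in\NN}$ is a decreasing sequence in~$\cA$ with $\bigcap_n[A_n]=\emptyset$, one has $\|\mu\restriction A_n\|\to 0$. Suppose instead $\|\mu\restriction A_n\|\ge\epsilon>0$ for every~$n$; then $\mu$ restricted to subsets of~$A_n$ is nontrivial, so some $B\subseteq A_n$ in~$\cA$ has $\mu(B)\ne 0$ and hence $A_n$ is a nonzero element of~$\cA$. The axiom $\dd_{\cA}(A,B)=\sup_{x\in\Omega}|\llbracket x\in A\rrbracket-\llbracket x\in B\rrbracket|$ combined with the discreteness of~$\dd_{\cA}$ then forces $[A_n]\ne\emptyset$. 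The partial type $\{\llbracket\mathtt{x}\in A_n\rrbracket\ge 1:n\in\NN\}$ in a variable of sort~$\Omega$ is therefore finitely satisfiable (the $A_n$ being decreasing), so $\lamp$-saturation of~$\cM$ produces $\omega\in\Omega^{\cM}$ lying in every~$[A_n]$, contradicting $\bigcap_n[A_n]=\emptyset$. The same argument covers positive and signed cases uniformly.

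For the two-sided approximation~(2), I would fix $S\in\AL$ and $\epsilon>0$ and apply the Carath\'eodory outer-measure construction (to the total-variation functional $\|\mu\restriction\cdot\|$) to produce a countable family $(C_k)_{k\in\NN}$ in~$\cA$ with $S\subseteq\bigcup_k[C_k]$ and $\sum_k\|\mu\restriction C_k\|\le|\muL|(S)+\epsilon/4$. The partial type in a variable~$\mathtt{X}$ of sort~$\cA$ comprising the formulas ``$C_k\subseteq\mathtt{X}$'' for each $k\in\NN$ together with the single atomic bound ``$\|\mu\restriction\mathtt{X}\|\le|\muL|(S)+\epsilon/2$'' is finitely satisfied by the internal unions $C_1\cup\cdots\cup C_N$; by $\lamp$-saturation it is realized by some $B\in\cA$ with $[B]\supseteq S$ and small excess $\|\mu\restriction B\|-|\muL|(S)\le\epsilon/2$. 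The symmetric construction applied to $\Omega\setminus S$ yields $A\in\cA$ with $[A]\subseteq S$ and likewise small defect, and finite additivity on the algebra then delivers $\|\mu\restriction(B\setminus A)\|\le\epsilon$.

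Finally,~(3) follows from~(2) by taking $\epsilon=1/n$ to obtain $A_n,B_n\in\cA$ with $[A_n]\subseteq S\subseteq[B_n]$ and $\|\mu\restriction(B_n\setminus A_n)\|<1/n$. The partial type $\{A_n\subseteq\mathtt{X}\subseteq B_n:n\in\NN\}$ is finitely satisfiable (since $[A_i]\subseteq[B_j]$ forces $A_i\subseteq B_j$ in~$\cA$, so the union $A_1\cup\cdots\cup A_N$ realizes any finite fragment), whence $\lamp$-saturation produces $A\in\cA$ with $A_n\subseteq A\subseteq B_n$ for every~$n$; then $[A]\triangle S\subseteq[B_n]\setminus[A_n]$ forces $|\muL|(S\triangle[A])\le 1/n$ for all~$n$, hence $=0$. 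The main obstacle is the passage from finite to countable additivity of $[\mu]$ on $[\cA]$: that is where $\lamp$-saturation does genuine work, whereas the saturation arguments for~(2) and~(3) amount to routine type-realization on the Boolean-algebra sort~$\cA$.
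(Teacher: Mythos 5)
Your proposal is correct and follows essentially the same route as the paper: saturation establishes countable additivity (indeed, that a descending chain of nonempty $[A_n]$ has nonempty intersection), Carath\'eodory provides the extension, and type-realization on the sort~$\cA$ collapses the increasing/decreasing approximating chains in part~(3). The one place you go beyond the paper's text is part~(2), where the paper tersely cites Carath\'eodory while you correctly supply the additional saturation step needed to replace the countable cover $\bigcup_k[C_k]$ by a single $B\in\cA$ with controlled total variation; that step is genuinely required and your spelling it out is an improvement in completeness, not a change of method.
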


\begin{proof}
We prove the statements for $\mu$ a positive finite measure, leaving the case of a signed measure to the reader.
  \begin{enumerate}
  \item Clearly, $[\mu]$ is a finitely additive nonnegative measure on~$([\Omega],[\cA])$ with finite total variation $\var([\mu]) = \mu(\mathtt{\Omega}) = \|\mu\|$. 
Suppose that $([A_i] : i\in\NN)$ is a descending chain in~$[\cA]$ (for some descending chain $(A_i)$ in~$\cA$). 
Assume that, for every $i\in\NN$, $[A_i]$ is nonempty.
Choose $\omega_i\in[A_i]$. 
By saturation, since $(A_i)$ is descending, we have $\llbracket\omega\in A_i\rrbracket=1$ for some $\omega$ and all $i\in\NN$. 
Certainly, $\omega\in\bigcap_{i\in\NN}[A_i]$. 
Therefore, if $\bigcap_{i\in\NN}[A_i] = \emptyset$, we must have $A_j = \emptyset$ for some $j\in\NN$, hence $\inf_{i\in\NN}[\mu]([A_i]) = [\mu]([A_j]) = \mu(A_j) = \mu(\varnothing) = 0$. 
By taking relative complements, the preceding argument implies that $[\mu]$ is countably additive on~$[\cA]$, hence a premeasure thereon (in fact, if a set $B\in[\cA]$ is a countable union of sets in $[\cA]$, then $B$ is necessarily a finite union of such sets).
By the Carathéodory Extension Theorem, $[\mu]$ admits an extension to a countably-additive measure on~$\sigma[\cA]=\AL$ with total variation $\var(\muL) = \var([\mu]) = \|\mu\|$. 
The extension is unique because $[\mu]$ has finite total variation. 

\item This assertion also follows from Carathéodory's theorem.

\item Let $S\in\AL$.
By part~(2), for each $n\in\NN$ we may choose $A_n,B_n\in\cA$ with $[A_n]\subset S\subset[B_n]$ and $\mu(B_n\setminus A_n)\le 1/(n+1)$. 
Clearly, $\inf_n\mu(B_n\setminus A_n) = 0$.
Without loss of generality, $(A_n)$ is increasing and $(B_n)$ decreasing.
By saturation, there exists $A\in\cA$ such that $A_n\subset A\subset B_n$ for all $n\in\NN$. 
Since $(A_n)$ is increasing and $(B_n)$ decreasing, $U = \bigcup_{n\in\NN}[A_n]\subset[A]\subset \bigcap_{n\in\NN}[B_n] = V$, 
and also $U\subset S\subset V$.
We have $\muL(S\triangle[A])\le\muL(V\setminus U) = \inf_n\mu(B_n\setminus A_n) = 0$, so $A$ is as required.\qedhere
  \end{enumerate}
\end{proof}

\begin{definition}
We call $\AL$ the \emph{Loeb algebra} of the Loeb structure~$\cM$, and $\muL$ the \emph{Loeb measure} on~$[\Omega]$ (i.e., on $\Omega^{\cM}$).
\end{definition}%

\begin{remarks}\hfill
\begin{enumerate}
\item
The proof of Proposition~\ref{prop:Loeb-measure} given above is an adaptation of the classical construction of Loeb measures~\cite{Loeb:1975} (see also the articles by Cutland~\cite{Cutland:1980, Cutland:2000}, and Ross~\cite{Ross:1997}, on which we base our approach). 
Our context differs from the classical one in the sense that we do not need to use nonstandard universes or hyperreals, and our measures need not be probability measures.
\item
Our definition of Loeb algebra differs from the classical definition, according to which the Loeb algebra is the completion $\overline{\AL}$ of~$\AL$ relative to~$\muL$ and every subset $U$ of an $\muL$-null set is declared to be $\overline{\AL}$-measurable and null. 
While the classical definition has the advantage that the converse statements to~(2) and~(3) of Proposition~\ref{prop:Loeb-measure}  hold, we prefer to avoid completing $\AL$ so its definition is independent of~$\mu$.
\end{enumerate}
\end{remarks}

\section{Integration structures}
\label{sec:prob-struct}

In this section, $L$ will denote a signature for Loeb structures (with sorts $\RR$, $\Omega$, $\cA$) such that, in addition to 
all the  constant and function symbols required for Loeb structures, $L$ includes a sort $\LO$ (whose interpretation will be a Banach lattice-algebra) and the following function symbols:
\begin{itemize}
\item a monadic symbol $\nLO{\cdot}:\LO\to\RR$ (to be interpreted as the Banach norm of sort~$\LO$);
\item function symbols $\sup : \LO\to\RR$ and $\inf : \LO\to\RR$; 
\item binary symbols 
\begin{align*}
+_{\LO}&:\LO\times\LO\to\LO,\\
-_{\LO} &:\LO\times\LO\to\LO,\\
\cdot_{\LO}&:\LO\times\LO\to\LO, 
\end{align*}
and $\centerdot_{\LO}:\RR\times\LO\to\LO$
(to be interpreted as the algebra operations of $\LO$ and the scalar multiplication of the interpretation of $\LO$, respectively);
\item binary symbols
\[
\wedge_{\LO}:\LO\times\LO\to\LO,\qquad \vee_{\LO}:\LO\times\LO\to\LO
\]
(to be interpreted as the lattice operations of  the interpretation of  $\LO$);
\item monadic symbols $\sup_{\Omega}:\LO\to\RR$ and $\inf_{\Omega}:\LO\to\RR$; 
\item a binary symbol $\ev_{\Omega} : \LO\times\Omega\to\RR$; 
\item a monadic symbol $\chi : \cA \to \LO$;

\item $I : \LO \to \RR$;
\item $\LO$-valued constant symbols $\mathtt{0}_{\LO}$ and $\mathtt{1}_{\LO}$.
\end{itemize}
A signature including the sorts and symbols above will be called a \emph{signature for integration structures.}
Such a signature may include many other sorts, function symbols, and constant symbols. 

\begin{notation}
If $L$ is a signature for integration structures, $\cM$ is an $L$-structure, $f\in(\LO)^{\cM}$, $\omega\in\Omega^{\cM}$ and $A\in\cA^{\cM}$, we write $\ev_{\Omega}^{\cM}(f,\omega)$ for $f(\omega)$,  $\chi_A$ for $\chi(A)$, and $If$ for $I(f)$.

If $\cM$ is an $L$-structure and $\cM$ is fixed by the context, in order to simplify the notation, we will denote the sort corresponding to $\LO$ in $\mathcal{M}$ as $\LO$, rather than $(\LO)^{\cM}$. Carrying this simplification one step further, we will remove the $\cM$-superscript from the interpretations of the function symbols in $\cM$; thus, for instance, we write $\nLO{\cdot}$ and $+_{\LO}$ instead of $(\nLO{\cdot})^{\cM}$ and  $(+_{\LO})^{\cM}$, respectively. 
\end{notation}

\begin{definition}[Integration structure]
Let $L$ be a signature for integration structures. 
A \emph{positive pre-integration structure} is an $L$-structure $\cM$ such that:
\begin{itemize}
\item The interpretation of 
($\LO$, $+_{\LO}$, $-_{\LO}$, $\centerdot_{\LO}$, $\cdot_{\LO}$, $\wedge_{\LO}$, $\vee_{\LO}$,  $\nLO{\cdot}$, $0_{\LO}$)  in $\cM$ is a Banach algebra and Banach lattice with anchor~$0_{\LO}$ (the \emph{zero function});
\item for all $f\in\LO$ and $\omega\in\Omega$, we have:
  \begin{itemize}
  \item the metric on~$\LO$ is that induced by the norm~$\nLO{\cdot}$; 
  \item $\nLO{f} = \sup_{\omega'\in\Omega}|f(\omega')|$; 
  \item $\ev_{\Omega}(\cdot,\omega)$ is an algebra and lattice homomorphism; 
  \item $\sup f = \sup_{\omega'\in\Omega}f(\omega')$ and $\inf f = \inf_{\omega'\in\Omega}f(\omega')$;
  \item $0_{\LO}(\omega) = 0$ and $1_{\LO}(\omega) = 1$;
  \end{itemize}
\item each $f\in\LO$ is \emph{approximately $\cA$-measurable} (\emph{$\Aapp$measurable}) in the following sense: 
For any reals $u<v$ there exists $A\in\cA$ such that $f(\omega)\le v$ if  $\llbracket\omega\in A\rrbracket = 1$, and $f(\omega)\ge u$ if $\llbracket\omega\in A\rrbracket = 0$;
\item $\chi_A(\omega) = \llbracket \omega\in A \rrbracket$ for all $A\in\cA$ and $\omega\in\Omega$;
\item for some $C\ge 0$: 
  \begin{itemize}
  \item $\cM$ is a pre-Loeb finite measure structure satisfying $\mu(A) = I\chi_A$ for all $A\in\cA$, and $\|\mu\| \coloneqq \mu(\Omega)\le C$ (we also define $\|I\| = \|\mu\|$);
  \item the \emph{integration operation}~$I$ is a $\|\mu\|$-Lipschitz linear functional~$\LO\to\RR$;
  \item $\|\mu\|\inf f \le If \le \|\mu\|\sup f$ for all $f\in\LO$
(in particular, $I$ is a positive functional: $If \ge 0$ if $f \ge 0$).
  \end{itemize}
\end{itemize}

In a \emph{signed pre-integration structure}, the last axiom becomes:
\begin{itemize}
\item For some $C\ge 0$:
 \begin{itemize}
  \item $\cM$ is a pre-Loeb signed measure structure satisfying  $\|\mu\| \le C$ and $\mu(A) = I\chi_A$ for all $A\in\cA$ (we define $\|I\| \coloneqq \|\mu\|$);
  \item $I$~is a $\|\mu\|$-Lipschitz linear functional~$\LO\to\RR$;
 \item $|If| \le \|\mu\|\nLO{f}$ for all $f\in\LO$.
 \end{itemize}
\end{itemize}

A \emph{probability pre-integration structure} is a positive integration structure with $\|\mu\| = 1$. 
A \emph{probability \emph{(resp., \emph{positive}, \emph{signed})} integration structure} is any \lamp-saturated probability (resp., positive, signed) pre-integration structure, where $\lambda=\card(L)$.
\end{definition}

For $f\in\LO$, we let $f_+ = (f\vee 0)$, $f_- = (-f)_+$ (so $f = f_++f_-$ with $f_+,f_-\ge 0$), and $|f| = f_++f_-$ (so $|f|(\omega) = |f(\omega)|$ for all $\omega\in\Omega$). 
For syntactic convenience, we treat the order of the lattice $\LO$ as part of the language, so we write $f\le g$ to mean $\nLO{(f\wedge g) - f}\le 0$. 

\begin{remark}
The usual definition of measurability suggests postulating that for every $f\in\LO$ and every interval $J\subset\RR$ there shall exist $A\in\cA$ such that $\omega\in\cA$ if and only if $f(\omega)\in J$.  
However, exact measurability in this sense is not axiomatizable by positive bounded $L$-sentences.  
On the other hand, approximate measurability is axiomatizable as shown below in Proposition~\ref{prop:Loeb-measurability}.  
Perhaps surprisingly, this postulate fails even in (saturated) probability structures.  
By working in saturated probability spaces and externally enlarging $[\cA]$ to a $\sigma$-algebra $\sigma[\cA]$ of subsets of~$\Omega$, the $\Aapp$measurability axiom implies the exact $\sigma[\cA]$-measurability of all functions $[f]:\Omega\to\RR$ for $f\in\LO$.  
See Proposition~\ref{prop:Loeb-measurability} below.
\end{remark}

\begin{proposition}\label{prop:meas-struc-1st-ord}
Let~$L$ be a language for pre-measure structures. 
For every $C\ge 0$, the class of all (probability, positive, or signed) pre-measure $L$-structures $\cM$ with $\|\boldsymbol{\mu}^{\cM}\|\le C$ is axiomatizable in the logic of approximate satisfaction of positive bounded formulas.
\end{proposition}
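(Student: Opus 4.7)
The plan is to follow the strategy of Proposition~\ref{prop:Loeb-1st-order} essentially verbatim, observing that the only additional content here is the presence of extra sorts and symbols in $L$ that play no role in the axiomatization of the measure-theoretic data. The definition of a (probability, positive, or signed) pre-measure structure consists of finitely many clauses: the discreteness of $\dd_\Omega$ and $\dd_\cA$; the identification of $(\cA,\varnothing,\mathtt{\Omega},\cup,\cap,\compl{\cdot})$ with an algebra of subsets of $\Omega$ via the membership predicate $\llbracket\cdot\in\cdot\rrbracket$; and the finite-additivity and magnitude-control properties of $\boldsymbol{\mu}$. I would treat each clause in turn, showing that it is captured by a positive bounded $L$-sentence (or by a scheme of such sentences in the semantics of approximate satisfaction).

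First I would dispatch the equational clauses: the Boolean-algebra identities, the compatibilities $\llbracket\omega\in A\cup B\rrbracket = \llbracket\omega\in A\rrbracket\vee\llbracket\omega\in B\rrbracket$ etc.\ (as listed in Definition~\ref{def:Loeb-struct}), and the finite additivity $\boldsymbol{\mu}(A\cup B)+\boldsymbol{\mu}(A\cap B)=\boldsymbol{\mu}(A)+\boldsymbol{\mu}(B)$ together with $\boldsymbol{\mu}(\varnothing)=0$. Each is a single universally quantified positive bounded $L$-sentence. The discreteness of $\dd_\Omega$ and $\dd_\cA$, as well as the $\{0,1\}$-valuedness of $\llbracket\cdot\in\cdot\rrbracket$, is captured in the standard manner for discrete sorts in the Henson semantics. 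The modifier "positive" corresponds to the extra axiom $(\forall A)(\boldsymbol{\mu}(A)\ge 0)$, and "probability" corresponds to $\boldsymbol{\mu}(\mathtt{\Omega})=1$.

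The only clause requiring any real care is the bound $\|\boldsymbol{\mu}\|\le C$. If $C$ is rational, this is axiomatized by the single positive bounded sentence
\[
(\forall A,B)\bigl(|\boldsymbol{\mu}(A)|+|\boldsymbol{\mu}(B)|-|\boldsymbol{\mu}(A\cap B)|\le C\bigr),
\]
exactly as in the proof of Proposition~\ref{prop:Loeb-1st-order}. For irrational $C$ one takes the scheme $\{\|\boldsymbol{\mu}\|\le D:\text{rational }D>C\}$; approximate satisfaction of this scheme in any $L$-structure is equivalent to $\|\boldsymbol{\mu}\|\le C$ on the nose.

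I do not expect a substantive obstacle. The main point demanding vigilance is notational: verifying that every clause is written as a genuine positive bounded $L$-formula (no disguised existentials, no unbounded quantifiers, no negations of nonequational atomic formulas), and verifying that approximate satisfaction of the combined scheme, together with the rigidity provided by the discreteness of $\Omega$ and $\cA$, forces each defining clause of Definition~\ref{def:Loeb-struct} to hold exactly in every model. Since the axiom list is independent of any additional sorts or symbols that $L$ may carry beyond those required for pre-measure structures, the argument is insensitive to the full richness of~$L$.
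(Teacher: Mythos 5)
There is a genuine gap, and it stems from misreading which structures the proposition is about. Despite the wording ``pre-measure structures,'' Proposition~\ref{prop:meas-struc-1st-ord} sits in Section~\ref{sec:prob-struct} and concerns pre-\emph{integration} structures: the paper's own proof refers to ``the axioms in the definition of integration structure,'' the parenthetical ``(probability, positive, or signed)'' matches the trichotomy for integration structures rather than the Loeb one, and the proposition is later invoked (in Proposition~\ref{prop:int-struc-1st-ord}) \emph{alongside} Proposition~\ref{prop:Loeb-1st-order}, precisely to supply axiomatizability of the integration-theoretic layer that the Loeb proposition does not cover. Your proposal axiomatizes only the Boolean algebra, the membership predicate, and the finitely additive measure $\boldsymbol{\mu}$ with $\|\boldsymbol{\mu}\|\le C$ --- that is, you have re-proved Proposition~\ref{prop:Loeb-1st-order}. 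You say nothing about the sort $\LO$ and its Banach lattice-algebra axioms, the evaluation map $\ev_{\Omega}$, the identity $\nLO{f}=\sup_{\omega}|f(\omega)|$, the map $\chi:\cA\to\LO$, the compatibility $\mu(A)=I\chi_A$, the Lipschitz and positivity conditions on the functional $I$, or --- crucially --- the approximate $\cA$-measurability of each $f\in\LO$.

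That last clause is the one place where the proof is not a routine transcription, and it is exactly the example the paper chooses to display. Exact measurability is \emph{not} expressible by positive bounded sentences, as the remark preceding the proposition points out; what saves the day is that $\Aapp$measurability as defined (with the slack $u<v$) is equivalent to the \emph{approximate} satisfaction of the scheme
\begin{equation*}
  (\forall_{\!D}f)(\exists A)(\forall\omega)\,
  \bigl(r\chi_{\compl{A}}-D\chi_A \le f(\omega) \le r\chi_A + D\chi_{\compl{A}}\bigr)
\end{equation*}
over rational $D>0$ and rational $r$: relaxing the inequalities by $\epsilon$ turns the exact threshold $r$ into the pair $u=r-\epsilon<v=r+\epsilon$, which is precisely the slack built into the definition, and the existential quantifier over $A$ is legitimately bounded because the sort $\cA$ is discrete and anchored. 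A complete proof must make this point. I would encourage you to redo the argument with the full axiom list of a pre-integration structure in front of you, treating the measurability clause as the one step requiring an actual argument; the part you did write is correct but is the content of the earlier proposition, not this one.
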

\begin{proof}
It is a routine exercise to verify that the axioms in the definition of integration structure can be written as sets of positive bounded $L$-sentences. 
As an example, 
the $\Aapp$measurability condition amounts to the axiom schema
\begin{equation*}
  (\forall_{\!D}\!f)(\exists A)(\forall \omega)
(r\chi_{\compl{A}} - D\chi_A \le f(\omega) \le r\chi_A + D\chi_{\compl{A}})
\end{equation*}
for all rational $D>0$ and~$r$, as easily seen from the semantics of approximate satisfaction. 
The remaining axioms are handled similarly. 
\end{proof}

\begin{notation}
  Given any function $G:\mathbb{R}\to\mathbb{R}$, an element $f\in\LO$, and a real number $t\in\RR$, we let
\begin{equation*}
  \{G(f) \le t\} \coloneqq \{\omega\in\Omega : G(f(\omega))\le t\},
\end{equation*}
with similar definitions for $\{G(f) < t\}$, $\{G(f) \ge t\}$, and $\{G(f) > t\}$.
\end{notation}

\begin{proposition}\label{prop:Loeb-measurability}
  Let $\cM$ be an integration $L$-structure.
  For every $f\in\LO$, the function $[f] = [f]^{\cM} : \Omega\to\RR$ defined by $\omega \mapsto f(\omega)$ is $\AL$-measurable.
\end{proposition}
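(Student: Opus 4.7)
The goal is to show that the preimage $\{f \le t\} = \{\omega \in \Omega : f(\omega) \le t\}$ lies in $\AL$ for every $t \in \RR$, since the sets $(-\infty,t]$ generate the Borel $\sigma$-algebra of~$\RR$. The only concrete tool at our disposal is the $\Aapp$measurability axiom, which for each pair $u < v$ provides some $A \in \cA$ satisfying $\{f < u\} \subseteq [A] \subseteq \{f \le v\}$. Since $[A] \in \AL$ by definition of~$\AL$, and $\AL$ is closed under countable intersections, the natural plan is to sandwich $\{f \le t\}$ between two nested sequences of sets $[A_n]$ whose outer and inner envelopes both converge to~$\{f \le t\}$.

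Concretely, I would fix $t \in \RR$ and, for each $n \ge 1$, apply the $\Aapp$measurability axiom with $u_n = t + 1/n$ and $v_n = t + 2/n$ to obtain $A_n \in \cA$ with
\begin{equation*}
\{f < t + 1/n\} \;\subseteq\; [A_n] \;\subseteq\; \{f \le t + 2/n\}.
\end{equation*}
Intersecting over $n \in \NN_{\ge 1}$, the left-hand envelope gives $\bigcap_n \{f < t + 1/n\} = \{f \le t\}$, while the right-hand envelope gives $\bigcap_n \{f \le t + 2/n\} = \{f \le t\}$. Sandwiched between two copies of the same set, we conclude $\{f \le t\} = \bigcap_{n \ge 1} [A_n]$, which is a countable intersection of elements of~$[\cA]$ and hence belongs to~$\AL$.

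Since $t$ was arbitrary, this shows that $[f]^{-1}((-\infty,t]) \in \AL$ for every $t$, so $[f]$ is $\AL$-measurable.

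There is no real obstacle here; the proof is essentially a one-paragraph argument once one notices that $\Aapp$measurability, applied with parameters shrinking toward~$t$, already suffices to express each sublevel set as a countable intersection of elements of~$[\cA]$. The only point requiring mild care is the choice of parameters: one must separate $u_n$ strictly from $t$ so that the inner envelopes $\{f < t + 1/n\}$ increase up to (and include) the locus where $f(\omega) = t$, while keeping $v_n \downarrow t$ so that the outer envelopes $\{f \le t + 2/n\}$ also collapse onto $\{f \le t\}$. I would not invoke part~(3) of Proposition~\ref{prop:Loeb-measure} or any saturation beyond what is already built into the $\Aapp$measurability axiom, since the argument above uses only the axiom itself together with the definition of~$\AL$.
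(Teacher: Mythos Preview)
Your proof is correct and follows the same overall strategy as the paper: express $\{f\le t\}$ as a countable intersection of sets $[A_n]$ obtained from the $\Aapp$measurability axiom with parameters descending to~$t$. The one substantive difference is that the paper invokes saturation to obtain, for each rational $u_n>t$, a set $A_n\in\cA$ giving an \emph{exact} split $\{f<u_n\}\subseteq[A_n]\subseteq\{f\le u_n\}$ (i.e., with the axiom's parameters $u$ and $v$ collapsed to the single value~$u_n$), and then intersects. You instead keep the two parameters separate, choosing $u_n=t+1/n$ and $v_n=t+2/n$, so that the axiom applies directly without any appeal to saturation; the sandwich $\{f<u_n\}\subseteq[A_n]\subseteq\{f\le v_n\}$ still forces $\bigcap_n[A_n]=\{f\le t\}$ because both envelopes collapse to the same set. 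Your version is thus slightly more elementary and shows that this particular proposition holds for any pre-integration structure, not just saturated ones---a point you correctly flag in your final paragraph.
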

\begin{proof}
Since $\AL$ is a $\sigma$-algebra, it suffices to show that $\{f \le t\}\in\AL$ for any fixed real number~$t$. 

Let $(u_n:n\in\NN)$ be a strictly decreasing sequence of rational numbers such that $\inf_nu_n = t$. 
By $\Aapp$measurability and saturation, for each $n\in\NN$ there exists $A_n\in\cA$ such that $f(\omega)\le u_n$ if $\omega\in[A_n]$, and $f(\omega)\ge u_n$ if $\omega\notin[A_n]$. 
Clearly, $\{f\le t\} = \bigcap_{n\in\NN}[A_n] \in \AL$.
\end{proof}

\begin{theorem}
[Riesz Representation Theorem for integration structures]\label{thm:Riesz}
  Let $\cM$ be an integration $L$-structure (positive or signed).
For every $f\in\LO$, $[f]$ is Loeb-integrable and
\begin{equation*}
  If = \int [f]\,\dd\muL.
\end{equation*}
\end{theorem}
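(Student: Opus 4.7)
My plan is to prove $If = \int[f]\,\dd\muL$ in two stages: first for $\cA$-simple functions via the axiom $I\chi_A = \mu(A)$ and linearity, then for arbitrary $f \in \LO$ by uniform approximation in the $\nLO{\cdot}$-norm, exploiting the $\Aapp$measurability axiom and the Lipschitz continuity of~$I$. Loeb-integrability of $[f]$ itself is essentially immediate: Proposition~\ref{prop:Loeb-measurability} gives $\AL$-measurability, the axiom $\nLO{f} = \sup_{\omega}|f(\omega)|$ yields uniform boundedness of $[f]$, and $\muL$ has finite total variation by Proposition~\ref{prop:Loeb-measure}(1).

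For the base case $f = \chi_A$ with $A \in \cA$, we have $[f] = \mathbf{1}_{[A]}$, and the identity $If = \mu(A) = \muL([A]) = \int [f]\,\dd\muL$ is immediate from the defining axiom of~$I$ and the construction of $\muL$. Linearity of $I$ and of the Loeb integral then extends this to every $\cA$-simple function $g = \sum_{k} c_k \chi_{B_k}$ with $B_k\in\cA$ and $c_k\in\RR$.

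The crux is to approximate an arbitrary $f \in \LO$ by such a $g$ in the $\nLO{\cdot}$-norm. Setting $M = \nLO{f}$ and fixing rational $\epsilon>0$, partition $[-M,M]$ by thresholds $t_k = -M + 2kM/N$ with $N$ chosen so that $2M/N < \epsilon$, and for a small $\delta>0$ apply $\Aapp$measurability at each pair $(t_k - \delta, t_k + \delta)$ to obtain $C_k \in \cA$ with $[C_k]\subseteq\{f\le t_k+\delta\}$ and $[\compl{C_k}]\subseteq\{f\ge t_k-\delta\}$. The iterated unions $C'_k = C_0 \cup \cdots \cup C_k$ form a nested chain in $\cA$ for which the same two inclusions persist (by a short check). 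The differences $B_0 = C'_0$, $B_k = C'_k\setminus C'_{k-1}$ for $1\le k\le N$, and $B_{N+1} = \compl{C'_N}$ then give a partition of $\Omega$ inside $\cA$, and on each $[B_k]$ the value $f(\omega)$ is pinched into an interval of length $\le 2M/N + 2\delta$. Taking $c_k$ to be the midpoint of this interval, $g = \sum_k c_k \chi_{B_k}$ satisfies $\nLO{f-g} < \epsilon$ for $\delta$ small enough.

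With this approximation in hand, the $\|\mu\|$-Lipschitz bound $|If - Ig| \le \|\mu\|\,\nLO{f-g}$ (built into the integration axioms) together with $\bigl|\int([f]-[g])\,\dd\muL\bigr| \le \|\mu\|\,\nLO{f-g}$ (from Proposition~\ref{prop:Loeb-measure}(1)) yields $\bigl|If - \int [f]\,\dd\muL\bigr| \le 2\|\mu\|\epsilon$, and letting $\epsilon\to 0$ gives the equality. The main obstacle I expect is the third step: turning the raw output of $\Aapp$measurability, which produces sets one level at a time with no built-in nesting, into a chain in $\cA$ that simultaneously interlaces cleanly and still faithfully tracks the level sets $\{f\le t_k\}$. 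Everything else is either an axiom, a routine linearity calculation, or a standard passage to the limit using continuity of~$I$.
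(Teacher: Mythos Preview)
Your proof is correct and takes a genuinely different route from the paper's.

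The paper first passes to the Loeb side: it partitions $[-D,D]$ into short intervals $J_i$, forms the $\AL$-measurable level sets $S_i = \{f\in J_i\}$ (via Proposition~\ref{prop:Loeb-measurability}), builds an $\AL$-simple minorant $F_\epsilon$, and only then invokes the internal approximability statement Proposition~\ref{prop:Loeb-measure}(3) to replace each $S_i$ by some $[A_i]$ with $A_i\in\cA$ and $\muL([A_i]\triangle S_i)=0$. The comparison of $If$ with $\int[f]\,\dd\muL$ then runs through the intermediate quantity $If_\epsilon = \int F_\epsilon$.

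You instead stay entirely on the internal side: you use the $\Aapp$measurability axiom directly to manufacture an $\cA$-simple function $g$ with $\nLO{f-g}<\epsilon$, and then conclude via the $\|\mu\|$-Lipschitz bound on $I$ together with $Ig = \int[g]\,\dd\muL$ for $\cA$-simple~$g$. Your nesting trick $C'_k = C_0\cup\cdots\cup C_k$ does exactly what is needed (both inclusions persist, as you note), so the ``main obstacle'' you flagged is not an obstacle at all. The upshot is that your argument never touches Proposition~\ref{prop:Loeb-measure}(3) and hence does not rely on the saturation hypothesis beyond the existence of~$\muL$ itself; it is in this sense more elementary. The paper's route, by contrast, showcases the strength of internal approximability and is closer in spirit to the classical Loeb-measure development.
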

\begin{proof}
We assume that $I$ is positive, leaving the signed case to the reader. 
Let $C = \|I\| = \|\mu\|$ and $D = \nLO{f}$.
$[f]$ is Loeb integrable because it is  $\AL$-measurable (Proposition~\ref{prop:Loeb-measurability}) and bounded (by $D$). 
The Loeb measure $\muL$ is also positive with total variation $\var(\muL) = \muL(\Omega)= \mu(\Omega) = \|\mu\| = C$.
Let us write $\int\!F$ for $\int\!F\dd\muL$. 
If $A\in\cA$, we have $I\chi_A = \mu(A) = \muL([A]) = \int[\chi_A]$.
Fix $\epsilon > 0$ and let 
$(J_i : i<k)$ be any finite collection of disjoint intervals, each having length at most~$\epsilon$, such that $\bigcup_{i<k}J_i \supset [-D,D]$. 
For $i<k$, let $S_i = \{f\in J_i\}\subset\Omega$.
By measurability of $[f]$ (Proposition~\ref{prop:Loeb-measurability}), the collection $(S_i : i<k)$ is an $\AL$-measurable disjoint cover of~$\Omega$, hence $\sum_{i<k}\muL(S_i) = \muL(\Omega) = C$.
Choose rational numbers $r_i$ ($i<k$) such that $J_i\subset [r_i,r_i+2\epsilon]$.
The $\AL$-simple function $F_{\epsilon} = \sum_{i<k}r_i\chi_{S_i }: \Omega\to\RR$ satisfies $F_{\epsilon}\le [f]\le F_{\epsilon}+2\epsilon$, hence $\int\!F_{\epsilon} \le \int[f] \le \int\!F_{\epsilon} + 2C\epsilon$. 
By internal approximability (Proposition~\ref{prop:Loeb-measure}(3)), there exist $A_i\in\cA$ ($i<k$) such that $\muL([A_i]\triangle S_i) = 0$; thus $\sum_{i<k}\mu(A_i) = \sum_{i<k}\muL(S_i) = C$. 
Without loss of generality we may assume that the sets $A_i$ are pairwise disjoint. 
Let $B = \compl{\left( \bigcup_{i<k} A_i \right)}$, so $\mu(B) = 0$.
Let $f_{\epsilon} = f\chi_B + \sum_{i<k}r_i\chi_{A_i}$.
Certainly, $f_{\epsilon} \le f\le f_{\epsilon}+2\epsilon$, so $I f_{\epsilon} \le If \le If_{\epsilon}+2C\epsilon$. 
Since $\mu(B)=0$, we have $I f_{\epsilon} = \sum_{i<k} r_i\mu(A_i) = \sum_{i<k} r_i\muL(S_i) = \int\!F_{\epsilon}$.
Thus, $If$ and $\int[f]$ both lie in $[If_{\epsilon},If_{\epsilon}+2C\epsilon]$, so $\left| If - \int[f] \right|\le 2C\epsilon$. 
Since $C$ is fixed and this holds for all $\epsilon>0$, we conclude that $If = \int[f]$.
\end{proof}

\section{Directed integration structures}
\label{sec:DCT}

\begin{definition}[Directed integration structure]
Fix a pointed directed set $(\cD,\le,j_0)$. 
Let $L$ be both a signature for integration structures (with sorts $\RR$, $\Omega$, $\cA$, $\LO$) and also for $\cD$-directed structures (with directed sort~$\DD$). 
In addition, assume that $L$ has a function symbol 
\begin{equation*}
\boldsymbol{\varphi} : \DD\to\LO.
\end{equation*}
Such $L$ will be called a \emph{signature for directed integration structures}
(and it may include any other sorts and symbols than those named).

A \emph{$\cD$-directed pre-integration structure} is an $L$-structure $\cM$ that is both a $\cD$-directed $L$-structure and a pre-integration $L$-structure (positive or signed).
A \emph{$\cD$-directed integration structure} is an \lamp-saturated $\cD$-directed pre-integration structure, where $\lambda=\card(L)$.

Given any $\cD$-directed pre-integration structure~$\cM$, let $\varphi = \boldsymbol{\varphi}^{\cM}$. 
Note that the definition of $L$-structure implies that $\varphi$ is uniformly bounded on the discrete sort~$\DD^{\cM}$. 
We define
\begin{equation*}
 \|\varphi\| \coloneqq \sup_{j\in\DD}\nLO{\varphi(j)}.
\end{equation*}
For $j\in \DD$, we will denote $\varphi(j)$ by $\varphi_j$. 
Each $\omega\in\Omega$ defines an external $\cD$-net $\fb(\omega) \coloneqq (\varphi_j(\omega):j\in\cD)$ in~$\LO$. 
We also have a real-valued $\cD$-net $I\fb \coloneqq (I(\varphi_j):j\in\cD)$. 
\end{definition}

\begin{proposition}\label{prop:int-struc-1st-ord}
Let~$L$ be a language for directed integration structures and $C\ge 0$.
The class of all (positive or signed) $\cD$-directed pre-integration $L$-structures $\cM$ such that $\|\varphi\|\le C$ and $\|\mu\|\le C$ is axiomatizable in the logic of approximate satisfaction.
\end{proposition}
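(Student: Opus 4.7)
The plan is to exhibit the class in question as the intersection of three simpler axiomatizable classes, and then use the fact that a union of positive bounded axiom schemes axiomatizes the intersection. The three components to axiomatize are: (i) the property of being a $\cD$-directed $L$-structure; (ii) the property of being a (positive or signed) pre-integration $L$-structure with $\|\mu\|\le C$; and (iii) the bound $\|\varphi\|\le C$ on the new symbol $\boldsymbol{\varphi}$.

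For (i), the discussion following Definition~\ref{def:directed-structure} already notes axiomatizability; the relevant axioms enforce discreteness of $\DD$, the Boolean behavior of $\llbracket\cdot\le\cdot\rrbracket$ together with its interpretation as a reflexive, antisymmetric, transitive, directed order, and the order-preserving injectivity of $j\mapsto\mathtt{c}_j^{\cM}$ (one sentence per pair $j,j'\in\cD$ recording whether $j\le j'$ and, when $j\ne j'$, separating the constants $\mathtt{c}_j$ and $\mathtt{c}_{j'}$ in the discrete metric $\dd_{\DD}$). For (ii), Proposition~\ref{prop:meas-struc-1st-ord} provides a positive bounded $L$-theory whose approximate models are exactly the (positive or signed) pre-integration $L$-structures with $\|\mu\|\le C$.

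For (iii), I would take the axiom scheme
\begin{equation*}
\Sigma_{\varphi} = \bigl\{\, (\forall j)\bigl(\nLO{\boldsymbol{\varphi}(j)}\le D\bigr) : D\in\QQ,\ D>C \,\bigr\}.
\end{equation*}
By the semantics of approximate satisfaction, $\cM\appmod\Sigma_{\varphi}$ holds if and only if $\nLO{\varphi^{\cM}(j)}\le D$ for every rational $D>C$ and every $j\in\DD^{\cM}$, which is equivalent to $\|\varphi\|=\sup_{j\in\DD^{\cM}}\nLO{\varphi^{\cM}(j)}\le C$. The union of the axiom schemes in (i), (ii), and (iii) is then the required positive bounded $L$-theory whose approximate models are precisely the structures described. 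No step poses a genuine obstacle; the only mild subtlety is that because $C$ may be irrational, bounding $\|\varphi\|$ requires a scheme indexed by rationals $D>C$ rather than a single sentence, exactly as was done for $\|\mu\|\le C$ in the proof of Proposition~\ref{prop:meas-struc-1st-ord}.
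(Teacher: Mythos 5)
Your proposal is correct and takes essentially the same approach as the paper: decompose the class into already-axiomatizable pieces (directed structure, pre-integration structure with $\|\mu\|\le C$) and adjoin the scheme $\{(\forall j)(\nLO{\boldsymbol{\varphi}(j)}\le D) : D\in\QQ,\ D>C\}$ for $\|\varphi\|\le C$. The paper's one-line proof cites Propositions~\ref{prop:Loeb-1st-order} and~\ref{prop:meas-struc-1st-ord} and writes down the same scheme; you are somewhat more explicit in isolating the $\cD$-directed-structure axioms as a separate ingredient, which the paper leaves implicit (having already remarked after Definition~\ref{def:directed-structure} that this class is axiomatizable).
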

\begin{proof}
This follows from Propositions~\ref{prop:Loeb-1st-order} and~\ref{prop:meas-struc-1st-ord}, upon remarking that the condition $\|\varphi\|\le C$ is equivalent to the approximate satisfaction of the axioms%
\footnote{The value $D = \|\varphi\|$ is characterized by the approximate satisfaction of the formula
\begin{equation*}
(\forall j)
\big(\nLO{\boldsymbol\varphi_j} \le D\big) 
\wedge (\exists j)\big(\nLO{\boldsymbol\varphi_j} \ge D\big).
\end{equation*}}
\begin{equation*}
(\forall j)\big(\nLO{\boldsymbol\varphi_j} \le r\big)\qquad
\text{for all rational $r>C$.\qedhere}
\end{equation*}
\end{proof}

\begin{proposition}[Dominated Convergence Theorem]\label{prop:DCT}
Assume that the directed set~$\cD$ is countable. 
Let $\cM$ be a (saturated) $\cD$-directed integration $L$-structure, for a suitable language~$L$. 
Then
\begin{equation*}
  \osc(I\fb) \le \|I\|\sup_{\omega\in\Omega}\osc(\fb(\omega))\qquad
\text{for all $\omega\in\Omega$.}
\end{equation*}
In particular, $I\fb$ is convergent if $\fb(\omega)$ is convergent for each $\omega\in\Omega$.
\end{proposition}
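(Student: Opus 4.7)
The plan is to reduce the statement to the classical Dominated Convergence Theorem applied to the Loeb probability (more precisely, finite measure) space $(\Omega,\AL,\muL)$, using the Riesz Representation Theorem~\ref{thm:Riesz} to identify $I\varphi_j$ with $\int[\varphi_j]\,\dd\muL$.  Each $[\varphi_j]$ is $\AL$-measurable (Proposition~\ref{prop:Loeb-measurability}) and uniformly bounded by $\nLO{\varphi_j}\le\norm{\varphi}<\infty$, which will furnish a dominating function.

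First, since $\cD$ is countable, I would extract an increasing cofinal sequence $(i_n:n\in\NN)$ in $\cD$ by the usual diagonal construction.  Combining parts~(1), (4), and~(6) of Proposition~\ref{lem:osc-metastab}, the oscillation of \emph{any} bounded $\cD$-net $\ab$ may be computed as
\begin{equation*}
  \osc(\ab) \;=\; \inf_{i\in\cD}\;\sup_{j,j'\ge i}\dd(a_j,a_{j'}) \;=\; \lim_{n\to\infty}\,\sup_{j,j'\ge i_n}\dd(a_j,a_{j'}),
\end{equation*}
the last equality because $(i_n)$ is cofinal.  Apply this in two ways: to the real-valued net $I\fb$, and pointwise to each $\cD$-net $\fb(\omega)$.

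Next I would introduce the auxiliary functions $h_n\colon\Omega\to\RR$ defined by
\begin{equation*}
  h_n(\omega) \;=\; \sup\bigl\{\,|\varphi_j(\omega)-\varphi_{j'}(\omega)|\;:\;j,j'\in\cD_{\ge i_n}\,\bigr\}.
\end{equation*}
Because $\cD_{\ge i_n}$ is countable, $h_n$ is $\AL$-measurable as a countable supremum of measurable functions; it is dominated by $2\norm{\varphi}$; and it decreases pointwise, with
\begin{equation*}
  \lim_{n\to\infty}h_n(\omega)\;=\;\osc(\fb(\omega))\;\le\;r\quad\text{where }r\coloneqq\sup_{\omega\in\Omega}\osc(\fb(\omega)).
\end{equation*}
The Loeb measure $\muL$ is finite with total mass $\norm{\mu}=\norm{I}$, so classical Dominated (or Monotone) Convergence gives $\int h_n\,\dd\muL \longrightarrow \int\osc(\fb(\cdot))\,\dd\muL \le r\,\norm{I}$.

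Finally, for each $n$ and every $j,j'\ge i_n$, Theorem~\ref{thm:Riesz} and the triangle inequality yield
\begin{equation*}
  |I\varphi_j - I\varphi_{j'}| \;=\; \Bigl|\!\int([\varphi_j]-[\varphi_{j'}])\,\dd\muL\Bigr| \;\le\; \int h_n\,\dd\muL.
\end{equation*}
Taking the supremum over $j,j'\ge i_n$ and letting $n\to\infty$ delivers the bound $\osc(I\fb)\le r\,\norm{I}$, which is the asserted inequality.  The ``in particular'' clause follows immediately: if $\fb(\omega)$ is Cauchy for every $\omega$, then $r=0$ by Proposition~\ref{prop:metastable-Cauchy} and Proposition~\ref{lem:osc-metastab}(8), so $\osc(I\fb)=0$ and $I\fb$ is Cauchy in $\RR$, hence convergent.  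The only substantive technical point is the measurability of $h_n$, which is precisely where the countability hypothesis on $\cD$ enters; everything else is bookkeeping, given the Riesz identification of $I$ with Loeb integration.
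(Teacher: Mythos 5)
Your proof is correct, and it genuinely differs in its core measure-theoretic step from the one in the paper, even though both begin the same way: identify $I\varphi_j$ with $\int[\varphi_j]\,\dd\muL$ via the Riesz Representation Theorem~\ref{thm:Riesz}, use Proposition~\ref{prop:Loeb-measurability} to get $\AL$-measurability, and use countability of~$\cD$ to extract an increasing cofinal sequence $(i_n)$ that makes countable suprema and unions available. Where you then diverge is in how the two-sided bound on $\big|I\varphi_j-I\varphi_{j'}\big|$ is obtained. The paper fixes $\epsilon>0$, constructs the measurable sets $\Omega^i_\epsilon$ on which the tail-oscillation of $\fb(\omega)$ beyond $i$ is $\le r+\epsilon$, shows by continuity from below that some $\Omega^{i_0}_\epsilon$ has measure within $\epsilon/(\norm{\varphi}+1)$ of the total mass, and splits the integral of $|\varphi_{j'}-\varphi_j|$ over $\Omega^{i_0}_\epsilon$ and its complement --- essentially a hand-rolled Egorov-type argument. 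You instead form the decreasing sequence of pointwise tail-oscillations $h_n(\omega)=\osc_{\ge i_n}(\fb(\omega))$, observe that it is measurable (countable sup), dominated by $2\norm{\varphi}$, and decreases to $\osc(\fb(\omega))$, and then invoke the classical Monotone/Dominated Convergence Theorem to pass the limit inside the integral. Your route is shorter and produces the slightly sharper intermediate inequality $\osc(I\fb)\le\int\osc(\fb(\cdot))\,\dd\muL$, from which the stated bound $r\,\norm{I}$ follows trivially; the paper's route has the virtue of being entirely self-contained and not explicitly invoking a named convergence theorem in its own proof of a ``Dominated Convergence Theorem.'' Both uses of countability are located in exactly the same place (measurability of a supremum/intersection over $\cD_{\ge i_n}$), and the handling of the ``in particular'' clause via Proposition~\ref{prop:metastable-Cauchy} and Proposition~\ref{lem:osc-metastab}(7)--(8) is the same.
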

\begin{proof}
We prove the inequality $\osc(I\fb) \le \sup_{\omega\in\Omega}\osc(\fb(\omega))$ in the case when $\cM$ is a probability integration structure (with $\|I\| = 1$), leaving the general case to the reader. 
Let $r = \displaystyle\sup_{\omega\in\Omega}\osc(\fb(\omega))$.

\vspace{-1ex}
For $\epsilon > 0$
and $i,j,j'\in\cD$, let
\begin{equation*}
  \Omega^{j,j'}_{\epsilon} 
= \{\omega\in\Omega : |\varphi_{j'}(\omega)-\varphi_j(\omega)| \le r+\epsilon\}
\end{equation*}
and
\begin{equation*}
  \Omega^i_{\epsilon} = \bigcap_{j,j'\in\cD_{\ge i}}\Omega^{j,j'}_{\epsilon} =  \{\omega\in\Omega : |\varphi_{j'}(\omega)-\varphi_j(\omega)| \le r+\epsilon \quad\text{for all $j,j'\in\cD_{\ge i}$}\}.
\end{equation*}
The functions $\varphi_j(\cdot)$ and $\varphi_{j'}(\cdot)$ are $\AL$-measurable, by Proposition~\ref{prop:Loeb-measurability}. 
It is clear that each set $\Omega_{\epsilon}^{j,j'}$ is also $\AL$-measurable, and so is the (countable) intersection~$\Omega_{\epsilon}^i$. 
Clearly, $\Omega_{\epsilon}^i\subset\Omega^{i'}_{\epsilon}$ for $i\le i'$.
Since $\omega\in\Omega^i_{\epsilon}$ implies $\osc_{\eta_i}(\varphi_{\bullet}(\omega)) \le r+\epsilon$, while $\osc(\fb(\omega)) \le r$ for all $\omega$ by hypothesis, part~(2) of Proposition~\ref{lem:osc-metastab} gives:
\begin{equation*}
  \bigcup_{i\in\cD}\Omega_{\epsilon}^i = \Omega.
\end{equation*}
Since $\cD$ is countable and $\muL$ is a probability measure, we have $\sup_{i\in\cD}\muL(\Omega_{\epsilon}^i) = \muL(\Omega) = 1$, hence $\mu(\Omega^{i_0}_{\epsilon})\ge 1-\epsilon/(\|\varphi\|+1)$ for some $i_0\in\cD$. 
For $j,j'\ge i_0$, we have:
\begin{equation*}
  \begin{split}
    | I(\varphi_{j'})-I(\varphi_j) | &\le I\big(|\varphi_{j'}-\varphi_j|\big) = \int\!\left|\varphi_{j'}-\varphi_j\right|\chi_{\Omega_{\epsilon}^{i_0}}
    + \int\!\left|\varphi_{j'}-\varphi_j\right|\chi_{\compl{(\Omega_{\epsilon}^{i_0})}}\\
    &\le (r+\epsilon)\muL(\Omega_{\epsilon}^{i_0}) + 2\|\varphi\|\muL(\compl{(\Omega_{\epsilon}^{i_0})}) \le  r + 3\epsilon.
  \end{split}
\end{equation*}
Therefore, $\osc(I\fb)\le \osc_{\eta_i}(I\fb) \le r+3\epsilon$ (again, by part~(2) of Proposition~\ref{lem:osc-metastab}). 
Since $\epsilon$ is an arbitrary positive number, $\osc(I\fb)\le r = \sup_{\omega\in\Omega}\osc(\fb(\omega))$.
\end{proof}
Note that the proof above uses only the standard theory of integration.

\begin{corollary}[Metastable Dominated Convergence Theorem]
\label{prop:meta-int}
Let~$(\cD,\le,j_0)$ be a directed set with $\cD$ countable. 
Let $\cM$ be a (not necessarily saturated) $\cD$-directed pre-integration $L$-structure, for a suitable signature~$L$. 
Fix a real number $s\ge 0$. 
Let $T$ be any uniform theory including the axiom $\|I\|\le s$ (that is, the axioms $\|I\|\le u$ for all rationals $u>s$) and extending the theory of $\cD$-directed pre-integration $L$-structures. 
Given $r\ge 0$ and any collection $\Eb^r = (E_{\epsilon,\eta} : \eta\in\Smpl(\cD), \epsilon>r)$ in $\PfinD$, there exists another collection $\widetilde{\Eb}{}^{\!\!rs} = (\widetilde{E}_{\epsilon,\eta} : \eta\in\Smpl(\cD), \epsilon>rs)$ such that every model $\cM$ of~$T$ satisfies the following property: 

\emph{If every external $\cD$-net in the collection $\cC = (\fb(\omega) : \varphi\in({\LO})^{[1]}, \omega\in\Omega)$ is $\Eb^r$-uniformly $r$-metastable, then $\widetilde{\Eb^{rs}}$ is a rate of $rs$-metastability for the collection $(I\fb : \varphi\in(\LO)^{[1]})$.}

In fact, one such $\widetilde{\Eb^{rs}}$ may be found depending only on $r,s$, and~$\Eb^r$.

\end{corollary}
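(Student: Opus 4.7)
The plan is to apply the Uniform Metastability Principle (Proposition~\ref{prop:U-metastab}) to the internal $\cD$-net $\mathtt{s} : \DD\to\RR$ defined by $\mathtt{s}(j) = I(\boldsymbol{\varphi}(j))$, relative to a suitable uniform extension $T'$ of~$T$.

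First, I would form $T'$ by adjoining to~$T$ two families of axioms: (i)~the norm bound $\|\varphi\|\le 1$ (encoding $\varphi\in(\LO)^{[1]}$); and (ii)~the assertion that, for every $\omega\in\Omega$, the parametrized internal net $(\varphi_j(\omega):j\in\cD)$ is $\Eb^r$-uniformly $r$-metastable. By a parametrized version of Proposition~\ref{prop:metastab-1st-ord} (adjoining $\omega$ as a universally quantified variable of sort~$\Omega$), hypothesis~(ii) is captured by the axiom scheme
\[
(\forall\omega)\;\bigvee_{i\in E^r_{\epsilon,\eta}}\;\bigwedge_{j,j'\in\eta_i}\bigl|\boldsymbol{\varphi}_{j'}(\omega)-\boldsymbol{\varphi}_{j}(\omega)\bigr|\le\epsilon',
\]
ranging over rational $\epsilon'>\epsilon>r$ and $\eta\in\Smpl(\cD)$; these are positive bounded $L$-sentences. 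The resulting uniform theory $T'$ depends only on $T$, $r$, $s$, and~$\Eb^r$.

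Next, I would verify that $I\fb$ is $rs$-metastable in \emph{every} model of~$T'$. Given $\cM\appmod T'$, pass to a $\lamp$-saturated elementary extension $\cM^*\succeq\cM$, which again satisfies~$T'$; in particular $\cM^*$ is a genuine $\cD$-directed integration structure. By axioms~(ii) and Proposition~\ref{lem:osc-metastab}(5), every net $\fb(\omega)$ in $\cM^*$ has $\osc(\fb(\omega))\le r$. Since $\cD$ is countable, the Dominated Convergence Theorem (Proposition~\ref{prop:DCT}) applied in $\cM^*$ yields
\[
\osc(I\fb)\le\|I\|\cdot\sup_{\omega\in\Omega^{\cM^*}}\osc(\fb(\omega))\le s\cdot r=rs.
\]
The external $\cD$-net $I\fb$ is assembled from the values $I(\varphi_j)$ with $j\in\cD$, each named by a constant $\mathtt{c}_j$ of~$L$; by elementarity these values coincide in $\cM$ and~$\cM^*$. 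Hence $\osc(I\fb)\le rs$ in~$\cM$ as well, so $I\fb$ is $rs$-metastable there by Proposition~\ref{lem:osc-metastab}(5).

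The final step is to invoke the Uniform Metastability Principle (Proposition~\ref{prop:U-metastab}) for the internal net $\mathtt{s}(j)=I(\boldsymbol{\varphi}(j))$ and the theory~$T'$. Fixing $\eta\in\Smpl(\cD)$, the principle yields a rate $(\widetilde{E}_{\epsilon,\eta}:\epsilon>rs)\subset\PfinD$ of $(rs,\eta)$-metastability for $I\fb$, uniform over all models of~$T'$. Collecting these across $\eta\in\Smpl(\cD)$ produces the desired $\widetilde{\Eb}^{rs}=(\widetilde{E}_{\epsilon,\eta}:\eta\in\Smpl(\cD),\epsilon>rs)$, which depends only on~$T'$, and hence only on $T$, $r$, $s$, and~$\Eb^r$. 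The main technical point—and the closest thing to an obstacle—is transferring the DCT conclusion from saturated to arbitrary models of~$T'$; this is handled cleanly by the observation that $I\fb$, being indexed by the fixed set~$\cD$ of constants of~$L$, is preserved under elementary extensions, so its oscillation is computed identically in $\cM$ and~$\cM^*$.
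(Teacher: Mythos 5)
Your proof is correct and reaches the same conclusion as the paper's, but it takes a leaner route. The paper's proof introduces four auxiliary function symbols ($\boldsymbol{\sigma}$, $\mathtt{a}$, $\boldsymbol{\tau}$, $\mathtt{b}$) with definability and parameter-admissibility axioms, and then invokes the parametrized form of the principle, Corollary~\ref{cor:UMP-paramd}, twice: once to encode the hypothesis that all $\fb(\omega)$ are $\Eb^r$-uniformly $r$-metastable, and once to extract the uniform rate for the sequences $I\fb$. You instead encode the hypotheses directly as axioms of an extended uniform theory $T'$ — the norm bound $\|\boldsymbol{\varphi}^{\cM}\|\le 1$, and the $(\forall\omega)$-quantified scheme asserting uniform $r$-metastability of $\fb(\omega)$, which you correctly recognize as positive bounded (the disjunction over $i\in E^r_{\epsilon,\eta}$ is finite, and $\omega$ is a legitimate bounded universal quantifier since $\Omega$ is a discrete anchored sort). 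You then verify $\osc(I\fb)\le rs$ in every model of~$T'$ by passing to a saturated elementary extension, applying Proposition~\ref{prop:DCT} there, and pulling the conclusion back via elementarity (valid, since each $I(\boldsymbol{\varphi}(\mathtt{c}_j))$ is a closed $L$-term whose real value is preserved). Finally you apply the base Uniform Metastability Principle~\ref{prop:U-metastab}, once per $\eta$, and collect the rates. Both routes obtain uniformity over the implicit $\varphi$- and $\omega$-parameters by letting the model vary; your version avoids the signature bookkeeping entirely, which is a real simplification. One small technical point worth flagging: $\mathtt{s}(j) = I(\boldsymbol{\varphi}(j))$ is a compound $L$-term rather than a function symbol, and Proposition~\ref{prop:U-metastab} is stated for a function symbol $\mathtt{s}$. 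To apply it literally you should adjoin a fresh symbol $\mathtt{s}:\DD\to\RR$ with the defining axiom $(\forall j)(|\mathtt{s}(j)-I(\boldsymbol{\varphi}(j))|\le 0)$ (analogous to the paper's treatment of $\mathtt{b}$ via $\boldsymbol{\tau}$), or else observe that the compactness argument in the proof of~\ref{prop:U-metastab} goes through verbatim with the formulas $\xi^\eta_i$ built from the term $I(\boldsymbol{\varphi}(\cdot))$ in place of $\mathtt{s}(\cdot)$. Either patch is routine.
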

\begin{proof}
Let $r, s \ge 0$ and $\Eb^r$ be given.
Restrict the signature $L$ so it only names the sorts and symbols strictly required for a signature of $\cD$-directed pre-integration structures. 
Let $\TT = T[\boldsymbol{\sigma},\mathtt{a}]$ where $\mathtt{a} : \DD\to\RR$ and $\boldsymbol{\sigma} : \LO\times\Omega\times\DD \to \RR$ are new function symbols, and let $T$ be the theory of pre-integration $L$-structures augmented with the following $\LL$-axioms: 
\begin{itemize}
\item $\|\mathtt{I}\|\le C$, for all rational $C>s$;
\item $(\forall_{\!C}\varphi)(\forall j)(\forall\omega)(|\boldsymbol{\sigma}(\varphi,\omega,j) - \varphi_j(\omega)| \le 0)$, for each rational $C\ge 0$;
\item $(\exists_1\varphi)(\exists \omega)(\forall j)(|\mathtt{a}(j)-\boldsymbol{\sigma}(\varphi,\omega,j)| \le 0)$.
\end{itemize}
Clearly, $T$ is a uniform $\LL$-theory, and $\sigma$ is $(T\restriction L[\boldsymbol{\sigma}])$-definable in~$L$; 
in fact, every model $\cM$ of $T\restriction L$ admits a unique expansion to a model $(\cM,\sigma)$ of~$T\restriction L[\boldsymbol{\sigma}]$, and any such model admits some expansion $(\cM,\sigma,a)$ to a model of~$T$ via any $a : \DD^{\cM}\to\RR$ that approximately admits $\sigma$-parameters of size $\lesssim 1$. 
Using Proposition~\ref{prop:metastab-1st-ord}, extend $T$ to a (necessarily uniform) $\LL$-theory $\TT$ such that $\ab$ is $\Eb^r$-uniformly $r$-metastable for all models $(\cM,\sigma,a)$ of~$\TT$. 
Henceforth all $L'$-structures for a language $L'\subset \LL$ are assumed to be models of $\TT\restriction L'$. 
If $(\cM,\sigma)\appmod \TT\restriction L[\boldsymbol{\sigma}]$, every external sequence $\fb(\omega)$ with $\|\varphi\|\le 1$ admits $\sigma$-parameters $(\varphi,\omega)$ of size $\le 1$, and every such $\fb(\omega)$ is of the form $\ab$ for some expansion $(\cM,\sigma,a)\appmod \TT$ of $(\cM,\sigma)$.
By Proposition~\ref{lem:osc-metastab}, $\osc(\ab) \le r$, hence $\osc(\fb(\omega)) \le r$ whenever $\|\varphi\|\le 1$. 
Since $\|I\|\le s$, we have $\osc(I\fb) \le rs$ by Proposition~\ref{prop:DCT}.
Let $\mathtt{b} : \DD\to\RR$ and $\boldsymbol{\tau} : \LO\times\DD\to\RR$ be new function symbols. 
Expand $\TT$ to an $\LL[\boldsymbol{\tau},\mathtt{b}]$-theory $\TT'$ by adding the axioms
\begin{itemize}
\item $(\forall_{\!C}\varphi)(\forall j)(|\boldsymbol{\tau}(\varphi,j) - I\varphi_j| \le 0)$, for each rational $C\ge 0$;
\item $(\exists_1\varphi)(\forall j)(|\mathtt{b}(j)-\boldsymbol{\tau}(\varphi,j)| \le 0)$.
\end{itemize}
Clearly, $\TT'$ is uniform, $\tau$ is $(\TT'\restriction \LL[\boldsymbol{\tau}])$-definable in~$\LL$, and an expansion $(\cM,\tau,b)$ of a model $(\cM,\tau)$ of~$\TT'\restriction\LL[\boldsymbol{\tau}]$ to a model of~$\TT'$ is via $b : \DD^{\cM}\to\RR$ that approximately admits $\tau$-parameters of size $\lesssim 1$. 
Henceforth, all $L'$-structures for any signature $L'\subset\LL[\boldsymbol{\tau},\mathtt{b}]$ are assumed to be models of~$\TT'\restriction L'$. 
Since $\TT$ implies that $\osc(I\fb)\le rs$ whenever $\|\varphi\|\le 1$, $\TT'$ implies%
\footnote{\emph{A priori,} $\TT'$ only implies that $\osc(\bb)\le rs$ when $\bb$ admits $\tau$-parameters of size~$\le 1$, so $\bb : j\mapsto\tau(\varphi,j) = I\varphi_j$ for some $\|\varphi\|\le 1$. 
If this is the case, however, then any $\bb$ that approximately admits $\tau$-parameters of size $\lesssim 1$ must still satisfy $\osc(\bb)\le rs$. 
Alternatively, the rest of the proof may proceed applying Corollary~\ref{cor:UMP-paramd} in the stronger version that only assumes metastability when $\bb$ admits exact parameters of size $\le 1$.} 
that $\osc(\bb)\le rs$.
By uniform metastability of parametrized sequences (Corollary~\ref{cor:UMP-paramd}), 
there exists a collection~$\widetilde{\Eb^{rs}}$ depending only on~$\TT'$ such that all sequences $\bb$, and hence all sequences $I\fb$ for $\|\varphi\|\le 1$, are $\widetilde{\Eb^{rs}}$-uniformly $rs$-metastable. 
Moreover, $\widetilde{\Eb^{rs}}$ depends only on $\TT'$, hence only on $r,s$ and the given rate~$\Eb^r$.
\end{proof}

\section{Background on metric model theory}
\label{sec:metric-structures}
This section describes the general framework for the classes of structures that are the focus of the paper. 
We refer to these structures as \emph{metric structures}.  

\subsection*{Henson metric structures}

Recall that a \emph{pointed metric space} is a triple $(M,d,a)$, where $(M,d)$ is a metric space and $a$ is a distinguished element of $M$ called the \emph{anchor} of $M$. 
If $(M,d,a)$ is a pointed metric space, the closed ball of radius $r$ around the anchor point $a$ will be denoted 
$B_M[r]$, or simply $B[r]$ if the ambient space $M$ is clear from the context; 
the corresponding open ball will be denoted  $M^{(r)}$
or $B(r)$.
If $(M_1,d_1,a_1), \dots, (M_n,d_n,a_n)$ are pointed metric spaces, we regard the product $\prod_{i=1}^n (M_i,d_i,a_i)$ tacitly as a pointed metric space by  taking $(a_1,\dots,a_n)$ as its anchor and using the supremum metric. 

\begin{definition}
A \emph{metric \emph{(or \emph{Henson})} structure}~$\mathcal{M}$ (often just called a structure in this manuscript) consists of the following items:
\begin{itemize}
\item
A family $(M^{(s)} \mid s\in \mathbf{S})$ of pointed metric spaces,
\item
A collection of functions of the form
\[
F: M^{(s_1)}\times\dots \times M^{(s_n)} \to M^{(s_0)},
\]
each of which is locally uniformly continuous, i.e., uniformly continuous on each bounded subset of its domain.
\end{itemize}
The spaces $M^{(s)}$  are called the \emph{sorts} of $\mathcal M$. 
We say that $\mathcal{M}$ \emph{is based on} the collection $(M^{(s)} \mid s\in\mathbf{S})$ of its sorts. 

We do require that every metric structure contain the set $\mathbb{R}$ of real numbers, equipped with the usual distance and $0$ as an anchor point, as a distinguished sort. 
We also require that the given metric on each sort of $\mathcal M$ be included in the list of functions of $\mathcal M$, and that the anchor of each sort be included as a (constant) function.

If $\mathcal{M}$ is based on $(M^{(s)} \mid s\in\mathbf{S})$ an element of $M^{(s)}$ will be called an \emph{element} of $\mathcal{M}$ of sort~$s$.  
The \emph{cardinality} of $\mathcal{M}$, denoted $\card(\mathcal{M})$, is defined as $\sum_{s\in S}\card (M^{(s)})$.

Some of the sorts $M$ of a structure may be discrete metric spaces, with the respective metric $\dd : M\times M \to \{0,1\}$ taking the value~$1$ at every pair of distinct points.
If all the sorts of $\mathcal M$ are discrete, we will say that $\mathcal{M}$ is a discrete structure. 
Similarly, if the sorts of $\mathcal M$ are bounded, we will say that $\mathcal M$ is a bounded metric structure.

Some of the functions of a structure $\mathcal M$ may have arity $0$. 
Such functions correspond to distinguished elements of $\mathcal{M}$. 
We will call these elements the \emph{constants} of the structure. 
If $F$ is a $\{0,1\}$-valued function of $\mathcal M$, we will identify $F$ with a subset of its domain, namely, $F^{-1}(1)$.  
Such a function will be called a \emph{relation}, or a \emph{predicate}, of $\mathcal M$.

We will require that the special sort $\mathbb{R}$ should come equipped with the field operations of $\mathbb{R}$, the order relation and the lattice operations ($\max(x,y)$ and $\min(x,y)$), plus a constant for each rational number.

\end{definition}

If a structure $\mathcal{M}$ is based on $(M^{(s)} \mid s\in\mathbf{S})$ and $(F^i \mid i\in I)$ is a list of the functions of $\mathcal M$, we write
\[
\mathcal{M}=(M^{(s)}, F_i \mid s\in\mathbf{S}, i\in I).
\]
For notational simplicity, the real sort $\mathbb{R}$, the metrics on the sorts of $\mathcal{M}$, and their respective  anchors need not be listed explicitly in this notation. 
We will only list them when needed for emphasis.

The structures that we will be dealing with are ``hybrid'' in the sense that some of their sorts are discrete, while others are genuine metric spaces. 
Typically, the nondiscrete structures will be Banach algebras or Banach lattices; in these the natural anchor point is $0$. 
The discrete sorts that we will encounter include partial orders and purely algebraic structures; in structures of this type, the particular choice of anchor point is often inconsequential.

\subsection{Henson signatures and structure isomorphisms}
\label{sec:signatures}
We will need a formal way to index the sorts and functions of any given structure $\mathcal{M}$. 
This is accomplished through the concept of \emph{signature} of a metric structure.
\begin{definition}
Let $\mathcal{M}$ be a structure based on $(M^{(s)} \mid s\in\mathbf{S})$. 
A \emph{Henson signature}~$L$ for $\mathcal{M}$ consists of the following items:
\begin{itemize}
\item
A sort index set $\mathbf S$,
\item
A special element $s_\mathbb{R}\in\mathbf{S}$ such that $M^{(s_\mathbb{R})}=\mathbb{R}$,
\item
For each function $F: M^{(s_1)}\times\dots \times M^{(s_n)} \to M^{(s_0)}$, a triple of the form 
\[
((s_1,\dots,s_n), f,s_0),
\]
where $f$ is a purely syntactic symbol called a \emph{function symbol} for $F$. 
We write $F=f^{\mathcal{M}}$ and call $F$ the \emph{interpretation} of $f$ in $\mathcal{M}$. 
We call $s_1\times\dots\times s_n$ and $s_0$ the \emph{domain} and \emph{range} of $f$, respectively. 
We express this by writing (purely formally)
\[
f:s_1\times\dots\times s_n \to s_0.
\]
 The number $n$ is called the \emph{arity} of the function symbol $f$.  
If $n=0$ and the constant value of $f^\mathcal M$ in $M^{(s_0)}$ is $c$, we call $f$ a \emph{constant symbol} for $c$. 

\end{itemize}
We express the fact that $L$ is a signature for $\mathcal{M}$ by saying that $\mathcal{M}$ is an \emph{$L$-structure}. 
A structure~$\cM$ for some Henson signature~$L$ will also be called a \emph{Henson structure.}
The \emph{cardinality} of a signature $L$, denoted $\card(L)$, is defined as
\[
\card(\bS)+\card(\{\, f \mid\text{$f$ is a function symbol of $L$}\,\})+\aleph_0,
\]
where $\mathbf S$ is the sort index set of~$L$.
\end{definition}

\begin{definition}
	\label{D:signatures}
If $L$ and $L'$ are signatures, we say that $L$ is a \emph{subsignature} of $L'$ (or that $L'$ is an \emph{extension} of $L$), and  write $L\subseteq L'$, if the following conditions hold:
\begin{itemize}
\item
The sort index set of $L$ is a subset of the sort index set of $L'$,
\item
Every triple of the form $((s_1,\dots,s_n), f,s_0)$ that is in $L$ is also in $L'$.
\end{itemize}
If $L,L'$ are signatures, we say that $L'$ is an \emph{extension by constants} of $L$ if $L$ and $L'$ have the same sort index set and every function symbol of $L'$ that is not in $L$ is a constant symbol. 
If the set of such constant symbols is $C$, we denote $L'$ as $L[C]$.
\end{definition}

\begin{definition}
	Let $L$ be a signature and let $\mathcal{M}$ and $\mathcal{N}$ be $L$-structures based on $(M^{(s)}\mid s\in\mathbf{S})$ and $(N^{(s)}\mid s\in\mathbf{S})$, respectively.
	\begin{enumerate}
	\item
	$\mathcal{M}$ is a \emph{substructure} of $\mathcal{N}$ if $M^{(s)}\subseteq N^{(s)}$ and, for each function symbol~$f$, the interpretation $f^\mathcal{N}$ of $f$ in $\mathcal N$ is an extension of $f^\mathcal{M}$.
	\item
	$\mathcal{M}$ and $\mathcal{N}$ are \emph{isomorphic} if there exists a family $\mathcal{I}=(\mathcal{I}^{(s)}\mid s\in\mathbf{S})$ of maps (called an \emph{isomorphism} from $\mathcal{M}$ into $\mathcal{N}$) such that for each $s\in\mathbf{S}$,  $\mathcal{I}^{(s)}:M^{(s)}\to N^{(s)}$ is a bijection that commutes with the interpretations of the function symbols of $L$, in the sense that if $f:s_1\times\dots\times s_n \to s_0$, then $\mathcal{I}^{(s_0)}(f^\mathcal{M}(a_1),\dots,f^\mathcal{M}(a_n))=f^\mathcal{N}(\mathcal{I}^{(s_1)}(a_1),\dots,\mathcal{I}^{(s_n)}(a_n))$. 
If  $a$ is an element of $M^{(s)}$ and  the sort index $s$ need not be made specific, we may write $\mathcal{I}(a)$ instead of $\mathcal{I}^{(s)}(a)$.
	\item 
	An \emph{automorphism} of $\mathcal{M}$ is an isomorphism between $\mathcal{M}$ and $\mathcal{M}$.
	\end{enumerate}
\end{definition}

\subsection{Uniform classes and ultraproducts of metric structures}
\label{sec:ultraproducts}
Recall that a \emph{filter} on a nonempty set~$\Lambda$ is a collection~$\mathcal{F}$ of subsets of~$\Lambda$ such that (\textit{i})~$\Lambda\in \mathcal{F}$ and $\emptyset\notin \mathcal{F}$, (\emph{ii})~$A\cap B\in \mathcal{F}$ if $A,B\in \mathcal{F}$, and (\emph{iii})~$A\in \mathcal{F}$ if $B\in \mathcal{F}$ and $A\supset B$. 
An \emph{ultrafilter} on~$\Lambda$ is a maximal filter $\cU$ on~$\Lambda$; equivalently, $\cU$ is a filter such that (\emph{iv})~$A\in\cU$ or $\Lambda\setminus A\in\cU$ for all $A\subset \Lambda$. 
If $\Lambda$ is an index set and $\mathcal{F}$ is a filter on $\Lambda$, we will say that a subset of $\Lambda$ is \emph{$\mathcal{F}$-large} if it is in $\mathcal{F}$. 
An ultrafilter $\cU$ on~$\Lambda$ is \emph{principal} if there exists $\lambda_0\in\Lambda$ such that $A\in\cU$ iff $A\ni\lambda_0$ for all $A\subset\Lambda$; otherwise, $\cU$ is \emph{nonprincipal}.
If $X$ is a topological space, $(x_\lambda)_{\lambda\in \Lambda}$ is a family of elements of $X$, and $\mathcal{F}$ is a filter on $\Lambda$, we will say that $(x_\lambda)_{\lambda\in \Lambda}$ \emph{converges to} an element $y\in X$ with respect to $\mathcal{F}$  if for every neighborhood $U$ of~$y$, the set $\{\lambda\in\Lambda\mid x_\lambda\in U\}$ is $\mathcal{F}$-large. 
If $X$ is compact Hausdorff, then for every family $x_{\bullet} = (x_\lambda)_{\lambda\in \Lambda}$ and every ultrafilter $\mathcal{U}$ on $\Lambda$ there exists a unique $y\in X$ such that $(x_\lambda)_{\lambda\in \lambda}$ converges to $y$ with respect to $\mathcal{U}$; 
this element $y$ is called the \emph{$\mathcal{U}$-limit} of  $(x_\lambda)_{\lambda\in \lambda}$ and is denoted $\Ulim x_{\bullet}$ or $\Ulim_{\lambda} x_\lambda$.

Let $(X_\lambda,d_\lambda)_{\lambda\in\Lambda}$ be a family of metric spaces and let $\mathcal{U}$ be an ultrafilter on $\Lambda$. 
The \emph{$\mathcal{U}$-ultraproduct} of $(X_\lambda,d_\lambda)_{\lambda\in \Lambda}$ is the metric space defined in the following manner. 
Let $\lixd \coloneqq \ell^\infty(X_\lambda,d_\lambda\mid\lambda\in\Lambda)$ be the set of all elements of $\prod_{\lambda\in\Lambda}X_\lambda$ that are bounded (when regarded as families indexed by~$\Lambda$ in the natural way). 
For $x=(x_\lambda)_{\lambda\in \Lambda}$, $y=(y_\lambda)_{\lambda\in \Lambda}$ in $\lixd$, and an ultrafilter $\cU$ on~$\Lambda$, define
\[
d(x,y)= \Ulim_{\lambda} d_\lambda(x_\lambda, y_\lambda).
\]
Since elements of $\lixd$ are bounded families, it is clear that $d$ is well defined. 
It is also easy to verify that $d$ is a pseudometric on $\lixd$. 
Now we can turn $d$ into a metric in the usual way, namely by identifying any two elements $x,y\in\lixd$ such that $d(x,y)=0$. 
For $x\in\lixd$, we let $(x)_{\mathcal U}$ denote the equivalence class of $x$ under this identification, and for any two equivalence classes $(x)_{\mathcal U},(y)_{\mathcal U}$, we define  $d((x)_{\mathcal U},(y)_{\mathcal U})$ as $d(x,y)$. 
The resulting metric space is called the \emph{$\mathcal{U}$-ultraproduct} of the family $(X_\lambda,d_\lambda)_{\lambda\in \lambda}$. 
It will be denoted $(\prod_{\lambda\in\Lambda} X_\lambda)_{\mathcal U}$. 

If the spaces $(X_\lambda,d_\lambda)$ are identical to the same space $(X,d)$ for $\lambda\in\Lambda$, the $\mathcal{U}$-ultraproduct $(\prod_{\lambda\in\Lambda} X_\lambda)_{\mathcal U}$ is called the \emph{$\mathcal{U}$-ultrapower} of $(X,d)$, denoted $(X)_{\mathcal U}$. 
Note that the map from $X$ into $(X)_{\mathcal U}$ that assigns to each $x\in X$ the equivalence class of the constant family $(x\mid\lambda\in\Lambda)$ is an isometric embedding. 
When the ultrafilter $\mathcal{U}$ is principal or the space $(X,d)$ is compact this map is  surjective, though it is not so in general. 
The verification of these statements is left to the reader. 

In the definition of ultraproduct, we lifted the metrics from the family $(X_\lambda,d_\lambda)_{\lambda\in \Lambda}$ to $(\prod_{\lambda\in\Lambda} X_\lambda)_{\mathcal U}$ by taking $\mathcal{U}$-limits. 
Doing the same for more general functions requires additional hypotheses. 
Let us introduce the concept of \emph{uniform family of functions}.

\begin{definition}
Suppose that $(X, d, a)$ and $(Y,\rho, b)$ are pointed pseudometric spaces,  $B$ is a subset of $X$, and $F:X\to Y$ is uniformly continuous and bounded on $B$.
\begin{enumerate}
\item
A \emph{bound for $F$ on $B$} is a number $\Omega\ge 0$ such that
\[
x\in B \quad\Rightarrow\quad F(x)\in B_Y(\Omega).
\]
\item
A \emph{modulus of uniform continuity for $F$ on $B$} is a function $\Delta:(0,\infty)\to[0,\infty)$ such that, for all $x,y\in B$ and $\epsilon > 0$,
\[
d(x,y) < \Delta(\epsilon) \quad\Rightarrow\quad \rho(F(x),F(y))\le\epsilon.
\]
\end{enumerate}

\end{definition}

\begin{definition}
\label{D:uniform class}
Let $L$ be a signature and let  $\sC$  be a class of $L$-structures. 
We will say that $\sC$  is a \emph{uniform class} if the following two conditions hold for every function symbol $f:s_1\times\dots\times s_n \to s_0$ of $L$ and every $r>0$:
\begin{enumerate}
\item
(Local equiboundedness condition for $\sC$.) 
There exists $\Omega=\Omega_{f,r}\in[0,\infty)$ such that, for every structure $\mathcal M$ of $\sC$, the number $\Omega$ is a bound for $f^{\mathcal{M}}$ on  $B_{M_\lambda^{(s_1)}}(r)\times\dots\times B_{M_\lambda^{(s_n)}}(r)$.

\item
(Local equicontinuity condition for $\sC$.) 
There exists $\Delta=\Delta_{f,r}:(0,\infty)\to[0,\infty)$ such that for every structure $\mathcal M$ of $\sC$, the function $\Delta$ is a modulus of uniform continuity for $f^{\mathcal{M}}$ on $B_{M_\lambda^{(s_1)}}(r)\times\dots\times B_{M_\lambda^{(s_n)}}(r)$.

Any collection $(\Omega_{r,f},\Delta_{r,f} \mid r>0)$ will be called a family of \emph{moduli of local uniform continuity} for~$f$. 
A collection $\bU = (\Omega_{r,f},\Delta_{r,f} \mid r>0, f\in \mathbf{F})$, with $f$ ranging over the collection $\mathbf{F}$ of function symbols of~$L$, will be called a \emph{modulus of uniformity} for $L$-structures.
\end{enumerate}
\end{definition}

\begin{remark} Clearly, any single $L$-structure $\cM$ admits some modulus of uniformity~$\bU$; 
however, no single such~$\bU$ is a modulus of uniformity for every $L$-structure. 
This is quite analogous to the fact that every Cauchy sequence is metastable with some uniform rate~$\Eb$, but no single such rate of uniform metastability applies to all Cauchy sequences (refer to the discussion in Section~\ref{sec:metastability}-\ref{sec:motivation}).
\end{remark}

\label{P:ultraproducts of functions}
Let $\sC$  be a uniform class of $L$-structures. 
Let $(\mathcal{M}_\lambda)_{\lambda\in \Lambda}$ be family of structures in $\sC$ such that  $\mathcal{M}_\lambda$ is based on $(M_\lambda^{(s)} \mid s\in\mathbf{S})$ for each $\lambda\in \Lambda$. 
If $f:s_1\times\dots\times s_n \to s_0$ is a function symbol of $L$, then for any ultrafilter $\mathcal{U}$ on $\Lambda$ we define a function
\begin{equation*}
\Big( \prod_{\lambda\in\Lambda} f^{\mathcal{M}_\lambda} \Big)_{\mathcal U} : 
\Big( \prod_{\lambda\in\Lambda} M^{(s_1)}_\lambda \Big)_{\mathcal U} 
\times\dots\times
\Big( \prod_{\lambda\in\Lambda} M^{(s_n)}_\lambda \Big)_{\mathcal U} 
\to
\Big( \prod_{\lambda\in\Lambda} M^{(s_0)}_\lambda \Big)_{\mathcal U} 
\end{equation*}
naturally as follows:  
If $(x_\lambda^{i})_{\lambda\in\Lambda}\in\ell^\infty(M_\lambda^{(s_i)},d_\lambda^{(s_i)})_{\Lambda}$ for $i = 1,\dots,n$, we let
\begin{equation}
\label{D:ultraproduct extension}
\Big( \prod_{\lambda\in\Lambda} f^{\mathcal{M}_\lambda} \Big)_{\mathcal U}
\big(\, \big((x_\lambda^{1})_{\lambda\in\Lambda}\big)_{\mathcal U},\dots,\big((x_\lambda^{m})_{\lambda\in\Lambda}\big)_{\mathcal U}\, \big)=
\big(\,(f(x_\lambda^{1},\dots,x_\lambda^{m}))_{\lambda\in\Lambda}\,\big)_{\mathcal U}.
\end{equation}
The uniformitwy of $\sC$ implies that if $(x_\lambda^{i})_{\lambda\in\Lambda} \in B_r\big(\ell^\infty(M_\lambda^{(s_i)},d_\lambda^{(s_i)})_{\Lambda}\big)$, then $(f(x_\lambda^{1},\dots,x_\lambda^{m}))_{\lambda\in\Lambda} \in B_{\Omega}\big(\ell^\infty(M_\lambda^{(s_i)},d_\lambda^{(s_i)})_{\Lambda}\big)$ for some $\Omega>0$, hence the right-hand side of~\eqref{D:ultraproduct extension} is an element of $(\prod_{\lambda\in\Lambda} X_\lambda)_{\mathcal U}$. 
Thus, if $\Omega$ is a uniform bound for $f^{\mathcal{M}_\lambda}$ on $B_{M_\lambda^{(s_1)}}(r_1)\times\dots\times B_{M_\lambda^{(s_n)}}(r_n)$ for all $\lambda\in\Lambda$, then $\Omega$ is also a bound for $( \prod_{\lambda\in\Lambda} f^{\mathcal{M}_\lambda} )_{\mathcal U}$ on $B_{(\prod_{\lambda\in\Lambda} M^{(s_1)}_\lambda)_{\mathcal U} }(r_1)\times\dots\times B_{(\prod_{\lambda\in\Lambda} M^{(s_n)}_\lambda)_{\mathcal U} }(r_n)$.
It is also trivial to verify that, if $\Delta$ is a uniform continuity modulus for $f^{\mathcal{M}_\lambda}$ on $B_{M_\lambda^{(s_1)}}(r_1)\times\dots\times B_{M_\lambda^{(s_n)}}(r_n)$ for all $\lambda\in\Lambda$, then  $\Delta$ is also a modulus of uniform continuity for $( \prod_{\lambda\in\Lambda} f^{\mathcal{M}_\lambda} )_{\mathcal U}$ on $B_{( \prod_{\lambda\in\Lambda} M^{(s_1)}_\lambda)_{\mathcal U} }(r_1)\times\dots\times B_{( \prod_{\lambda\in\Lambda} M^{(s_n)}_\lambda)_{\mathcal U} }(r_n)$.  
Thus, equation~\eqref{D:ultraproduct extension} defines the interpretation of functions in $(\prod_{\lambda\in\Lambda} X_\lambda)_{\mathcal U}$ well. 
The following proposition summarizes the preceding discussion.

\begin{proposition}
\label{prop:ultraproduct}
Let $\sC$ be a uniform class of $L$-structures and let  $(\mathcal{M}_\lambda)_{\lambda\in \Lambda}$ be a family of structures in $\sC$ such that for each $\lambda\in \Lambda$ the structure $\mathcal{M}_\lambda$ is based on $(M_\lambda^{(s)} \mid s\in\mathbf{S})$. 
If $\mathcal{U}$ is an ultrafilter on $\Lambda$, we obtain an $L$-structure $(\prod_{\lambda\in\Lambda} \mathcal{M}_\lambda)_{\mathcal U}$ based on  $(\,(\prod_{\lambda\in\Lambda} M^{(s)}_\lambda)_{\mathcal U} \mid s\in\mathbf{S}\,)$ by interpreting any function symbol $f$ of~$L$ in $( \prod_{\lambda\in\Lambda} \mathcal{M}_\lambda)_{\mathcal U}$ as $(\prod_{\lambda\in\Lambda} f^{\mathcal{M}_\lambda} )_{\mathcal U}$.

Furthermore, any modulus of uniformity for~$\mathscr C$ is also a modulus of uniformity for $(\prod_{\lambda\in\Lambda} \mathcal{M}_\lambda)_{\mathcal U}$.
\end{proposition}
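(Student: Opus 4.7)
The proof has essentially been carried out in the paragraph preceding the statement, so my proposal is to organize those observations into a clean verification. The task splits into three parts: well-definedness of the ultraproduct interpretation, verification that it lands in the ultraproduct sort, and transfer of the uniformity moduli.

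First, I would fix a function symbol $f : s_1\times\cdots\times s_n\to s_0$ and representatives $(x^i_\lambda)_{\lambda\in\Lambda}\in\ell^\infty(M_\lambda^{(s_i)},d_\lambda^{(s_i)})_\Lambda$ for $i=1,\dots,n$. Pick $r>0$ so that each family lies in $B(r)$ coordinatewise (after possibly modifying on an $\mathcal{U}$-null set of indices, which does not affect the equivalence class). By the local equiboundedness condition with modulus $\Omega=\Omega_{f,r}$, the family $(f^{\mathcal{M}_\lambda}(x^1_\lambda,\dots,x^n_\lambda))_{\lambda\in\Lambda}$ lies in $B(\Omega)$ in every coordinate, so it belongs to $\ell^\infty(M_\lambda^{(s_0)},d_\lambda^{(s_0)})_\Lambda$ and therefore defines an element of $\bigl(\prod_\lambda M^{(s_0)}_\lambda\bigr)_{\mathcal{U}}$. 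This shows the right-hand side of \eqref{D:ultraproduct extension} makes sense.

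Next I would check that the value in the ultraproduct does not depend on the chosen representatives. Suppose $(x^i_\lambda)_\lambda$ and $(y^i_\lambda)_\lambda$ represent the same class, i.e.\ $\Ulim_\lambda d^{(s_i)}_\lambda(x^i_\lambda,y^i_\lambda)=0$ for each $i$. Enlarging $r$ if necessary so both families lie in $B(r)$, apply the local equicontinuity condition with modulus $\Delta=\Delta_{f,r}$: for every $\epsilon>0$ the set of $\lambda$ on which $d^{(s_i)}_\lambda(x^i_\lambda,y^i_\lambda)<\Delta(\epsilon)$ for all $i$ is $\mathcal{U}$-large, hence so is $\{\lambda:d^{(s_0)}_\lambda(f^{\mathcal{M}_\lambda}(\bar x_\lambda),f^{\mathcal{M}_\lambda}(\bar y_\lambda))\le\epsilon\}$, yielding $\Ulim_\lambda d^{(s_0)}_\lambda(f^{\mathcal{M}_\lambda}(\bar x_\lambda),f^{\mathcal{M}_\lambda}(\bar y_\lambda))\le\epsilon$. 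Letting $\epsilon\to 0$ gives equality of the two classes. The symbols for the sort metrics and the anchor constants (which are part of $L$ by convention) are interpreted coordinatewise and satisfy the axioms of metric/constant automatically by the construction of the ultraproduct of metric spaces recalled earlier in this section.

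Finally, for the uniformity claim, I would verify that $(\Omega_{f,r},\Delta_{f,r})$ remains a pair of moduli for $(\prod_\lambda f^{\mathcal{M}_\lambda})_{\mathcal{U}}$. The bound transfers because if every representative $(x^i_\lambda)_\lambda$ lies in $B(r)$ coordinatewise, then each $f^{\mathcal{M}_\lambda}(\bar x_\lambda)$ lies in $B(\Omega_{f,r})$, so the $\mathcal{U}$-class also lies in $B(\Omega_{f,r})$. The continuity modulus transfers by exactly the argument used for well-definedness: if the ultraproduct distances $d\bigl(((x^i_\lambda))_{\mathcal{U}},((y^i_\lambda))_{\mathcal{U}}\bigr)<\Delta_{f,r}(\epsilon)$ with both families in $B(r)$, then on an $\mathcal{U}$-large set of $\lambda$ the coordinatewise distance is below $\Delta_{f,r}(\epsilon)$, whence $d^{(s_0)}_\lambda(f^{\mathcal{M}_\lambda}(\bar x_\lambda),f^{\mathcal{M}_\lambda}(\bar y_\lambda))\le\epsilon$ on that set, giving the same inequality for the $\mathcal{U}$-limit.

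None of these steps presents a serious obstacle: the hypothesis of uniformity of $\mathscr{C}$ is precisely what is needed at every stage. The only delicate bookkeeping is the silent use of the fact that modifying a bounded family on an $\mathcal{U}$-null set of indices produces the same ultraproduct class, which lets us replace ``coordinatewise for $\mathcal{U}$-almost every $\lambda$'' with ``coordinatewise for every $\lambda$'' whenever convenient.
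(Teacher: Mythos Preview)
Your proposal is correct and follows exactly the paper's approach: the paper does not give a separate proof but states that the proposition ``summarizes the preceding discussion,'' and you have done precisely what that discussion does---use local equiboundedness to show the right-hand side of \eqref{D:ultraproduct extension} is a bounded family, use local equicontinuity for well-definedness on equivalence classes, and observe that both moduli transfer verbatim to the ultraproduct. Your additional bookkeeping remark about modifying representatives on a $\mathcal{U}$-null set is a helpful clarification that the paper leaves implicit.
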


\begin{definition}
  \label{D:ultraproduct}
The structure $(\prod_{\lambda\in\Lambda} \mathcal{M}_\lambda)_{\mathcal U}$ in Proposition~\ref{prop:ultraproduct} is called the \emph{$\cU$-ultraproduct} of the family $(\mathcal{M}_\lambda)_{\lambda\in \Lambda}$.
\end{definition}

Note that the hypothesis that a class~$\sC$ is uniform asserts that the collection $(f^{\cM}\mid\cM\in\sC)$ of interpretations of a given function symbol $f$ is an equicontinuous and equibounded family, precisely as in the statement of the Arzelà-Ascoli theorem. 
This classical result is thus subsumed under the fact that any ultraproduct of a uniform family of $L$-structures is itself an $L$-structure.
Just as the hypotheses of equicontinuity and equiboundedness are both necessary for the conclusion of the Arzelà-Ascoli theorem to hold, it should be clear that ultraproduct structures are defined, in general, only for subfamilies of some \emph{uniform} class of structures---otherwise, the right-hand side of equation~\eqref{D:ultraproduct extension} may fail to define an element lying at finite distance from the anchor $\big((a_{\lambda})_{\lambda\in\Lambda}\big)_{\cU}$, or else the function so defined may fail to be continuous. 

\subsection{Henson languages and semantics: Formulas and satisfaction}
\label{sec:formulas}
We now focus our attention on the precise connection between metric structures and their ultrapowers and, more generally, between families of metric structures and their ultraproducts. 
These connections are intimately connected to notions from model theory, a branch of mathematical logic.

In the current literature, there are two formally different but equivalent logical frameworks to study metric structures from a model-theoretic perspective. 
One of these frameworks is that of \emph{continuous model theory}~\cite{Ben-Yaacov-Usvyatsov:2010,Ben-Yaacov-Berenstein-Henson-Usvyatsov:2008}, which uses real-valued logic, and the other is the logic of approximate truth, introduced in the 1970s by C.~W.~Henson~\cite{Henson:1975,Henson:1976} and developed further by Henson and the second author~\cite{Henson-Iovino:2002,Iovino:2014}. 
We have adopted the latter because, despite its less widespread use in the current literature, it has strong syntactic advantages, as it allows dealing with unbounded metric structures such as Banach spaces in a natural fashion, without having to replace the metric by an equivalent bounded metric.%
\footnote{For a proof of the equivalence among various formulations, see~\cite{Iovino:2001,Iovino:2009,Caicedo-Iovino:2014}.}

Let $L$ be a fixed signature. 
In analogy with languages of traditional (discrete) first-order logic, we construct a language, called a \emph{Henson language,} which is suitable for discussing properties of metric structures. 
The language consists of syntactic expressions called \emph{positive bounded formulas} of~$L$, or $L$-formulas. 
These are strings or symbols built from a basic alphabet that includes the following symbols:
\begin{itemize}
\item
The function symbols of the signature $L$,
\item
For each sort index $s\in\mathbf{S}$ of $L$, a countable collection of symbols called \emph{variables of sort $s$}, or variables \emph{bound to the sort $s$}.
\item
Logical connectives $\lor$ and $\land$, and for each positive rational number $r$, quantifiers $\forall\!_r$ and $\exists_r$.
\item
Parentheses and the comma symbol. 
\end{itemize}

First we define the concept of $L$-term.  
Intuitively, a term is a string of symbols that may be interpreted by elements of $L$-structures.  Since elements of structures occur inside sorts, each term must have a sort associated with it. 
Thus  we define the concept of \emph{$s$-valued term}:

\begin{definition}
An \emph{$s$-valued $L$-term} is any finite string of symbols that can be obtained by finitely many applications of the following rules of formation:
\begin{enumerate}
\item
Every variable of sort $s$ is an $s$-valued term,
\item
If $f$ is a function symbol with $f:s_1\times\dots\times s_n \to s$ and $t_1,\dots,t_n$ are  such that $t_i$ is an $s_i$-valued for $i=1,\dots, n$, then $f(t_1, \dots, t_n)$ is an $s$-valued term.
\end{enumerate}

If $t$ is a term and $x_1,\dots, x_n$ is a list of variables that contains all the variables occurring in $t$, we write $t$ as $t(x_1,\dots, x_n)$.

A \emph{real-valued term} is an $s_\mathbb{R}$-valued term. 
A \emph{term} is string that is an $s$-valued term for some $s\in\mathbf{S}$.

\end{definition}

\begin{definition}
Let $\mathcal{M}$ be an $L$-structure based on $(M^{(s)} \mid s\in\mathbf{S})$ and let $t(x_1,\dots,x_n)$ be an $L$-term, where $x_i$ is a variable of sort $s_i$, for $i=1,\dots, n$. 
If $a_1,\dots, a_n$ are elements of $ \mathcal{M}$ such that $a_i$ is of sort $s_i$, for $i=1,\dots, n$, the \emph{evaluation} of $t$ in $\mathcal M$ at $a_1,\dots,a_n$, denoted $t^{\mathcal{M}}[a_1,\dots, a_n]$, is defined by induction on the length of $t$ as follows:
\begin{enumerate}
\item
If $t$ is $x_i$, then $t^{\mathcal{M}}[a_1,\dots, a_n]$ is $a_i$,
\item
If $t$ is $f(t_1,\dots, t_n)$, where $f$ is a function symbol and $t,\dots, t_n$ are terms of lower length, then $t^{\mathcal{M}}[a_1,\dots, a_n]$ is 
\[
f^{\mathcal{M}}(t_1^{\mathcal{M}}[a_1,\dots, a_n],\dots, t_n^{\mathcal{M}}[a_1,\dots, a_n]).
\]
\end{enumerate}
As an addendum to~(2), by a slight abuse of notation, if $f$ is a nullary $M^{(s)}$-valued function symbol (i.e., a constant symbol) we usually interpret $f$ as be the (unique) element $a\in M^{(s)}$ in the range of the function~$f^{\cM}$.
\end{definition}

\begin{notation}
\label{R:polynomial terms}
Recall from the definition of signature that every signature $L$ must include a special sort index  $s_\mathbb{R}$ and constant symbol for each rational number. 
Informally we will identify each rational number with its constant symbol in~$L$. 
More generally, since $L$ includes function symbols for the addition and multiplication in $\mathbb{R}$, for every polynomial $p(x_1,\dots,x_n)\in\mathbb{Q}[x_1,\dots,x_n]$ there exists a real-valued $L$-term $t(x_1,\dots,x_n)$ such that $t^{\mathcal{M}}[a_1,\dots,a_n] =p(a_1, \dots,a_n)$ for any $L$-structure $\mathcal{M}$ and $a_1,\dots,a_n\in\mathbb{R}$. 
We will identify $t$ and $p$. 
Thus, if $t_1,\dots, t_n$ are $L$ terms and $p(x_1,\dots,x_n)\in\mathbb{Q}[x_1,\dots,x_n]$, we may refer to the $L$-term $p(t_1,\dots, t_n)$.
\end{notation}

\begin{definition}
A \emph{positive bounded $L$-formula} (or simply an \emph{$L$-formula}) is any finite string of symbols that can be obtained by finitely many applications of the following rules of formation:
\begin{enumerate}
\item
If $t$ is a real-valued $L$-term and $r$ is a rational number, then the expressions
\[
t \le r\qquad\text{and} \qquad t\ge r
\]
are $L$-formulas.  
These are the \emph{atomic} $L$-formulas.
\item
If $\varphi$ and $\psi$ are positive bounded $L$-formulas, then the expressions
\[
(\varphi\land\psi) \qquad\text{and}\qquad (\varphi\lor\psi)
\] 
are positive bounded $L$-formulas. 
These are the \emph{conjunction} and \emph{disjunction}, respectively, of $\varphi$ and $\psi$.
\item 
If $\varphi$ is positive bounded $L$-formula, $r$ is a positive rational, and $x$ is a variable, then the expressions
\[
\exists_r  x\,\varphi \qquad\text{and}\qquad \forall\!_r x\,\varphi
\] 
are positive bounded $L$-formulas. 
\end{enumerate}
\end{definition}

\begin{notation}
Whenever possible, we shall omit parentheses according to the usual syntactic simplification rules.
If $t$ is a real-valued term and $r_1,r_2$ are rational numbers, we will write $r_1\le t \le r_2$  as an abbreviation of the conjunction $(r_1\le t\land t\leq r_2)$. 
Similarly, we regard $t=r$ as an abbreviation of the conjunction $(t\leq r\land t\geq r)$.  
If $t_1$ and $t_2$ are real-valued terms, we regard $t_1\le t_2$ as an abbreviation of $0\le t_2-t_1$ and, if $t_1,t_2$ are $s$-valued terms, we regard the expression $t_1=t_2$ as an abbreviation of $d(t_1,t_2)\le 0$, where $d$ is the function symbol designating the metric of the sort indexed by $s$. 
If $\varphi_1,\dots,\varphi_n$ are formulas, we may write $\bigwedge_{i=1}^n \varphi_i$ and $\bigvee_{i=1}^n \varphi_i$ as abbreviations of  $\varphi_1\land\dots\land\varphi_n$ and $\varphi_1\lor\dots\lor\varphi_n$, respectively. 
If $t$ is an $s$-valued term and $d,a$ are the designated function symbol and constant symbol, respectively, for the metric and the anchor of this sort, we shall regard the expression $t\in B_r$ as an abbreviation of the formula $d(t,a)\le r$.
\end{notation}

\begin{definition}
A \emph{subformula} of a formula $\varphi$ is a substring of $\varphi$ that is itself a formula. 
If $\varphi$ is a formula and $x$ is a variable, we say that $x$ occurs \emph{free} in $\varphi$ if there is at least one occurrence of $x$ in $\varphi$ that is not within any subformula of the form $\forall\!_r x\,\varphi$ or $\exists_r x\,\varphi$.  
If $x_1,\dots, x_n$ are variables, we write $\varphi$ as $\varphi(x_1,\dots, x_n)$ if all the free variables of $\varphi$ are among $x_1,\dots, x_n$. 
A positive bounded $L$-\emph{sentence} is a positive bounded formula without any free variables.
\end{definition}

The definition below introduces the most basic concept of model theory, namely, the satisfaction relation $\models$ between structures and formulas. Intuitively, if $\mathcal{M}$ is an $L$ structure,  $\varphi(x_1,\dots, x_n)$ is an $L$- formula and $a_1,\dots,a_n$ are elements of $\mathcal{M}$,
\[
\mathcal{M}\models\varphi[a_1,\dots, a_n]
\]
means that $\varphi$ is true $\mathcal{M}$ if $x_1,\dots x_n$ are interpreted as $a_1,\dots, a_n$, respectively. 
Evidently, for this to be meaningful, the variable $x_i$ must be of the same sort as the element $a_i$, for $i=1,\dots, n$.

\begin{definition}
Let $\mathcal{M}$ be an $L$-structure based on $(M^{(s)} \mid s\in\mathbf{S})$ and let $\varphi(x_1,\dots,x_n)$ be an $L$-formula, where $s_i$ is a variable of sort $s_i$, for $i=1,\dots, n$. 
If $a_1,\dots, a_n$ are elements of $ \mathcal{M}$ such that $a_i$ is of sort $s_i$, for $i=1,\dots, n$, the \emph{(discrete) satisfaction relation} $\mathcal{M}\models\varphi[a_1,\dots, a_n]$ is defined inductively as follows:
\begin{enumerate}
\item
If $\varphi(x_1,\dots,x_n)$ is $t\le r$, where $t=t(x_1,\dots, x_n)$ is a real-valued term and $r$ is rational, then
$\mathcal{M}\models\varphi[a_1,\dots, a_n]$ if and only if $t^{\mathcal{M}}[a_1,\dots, a_n]\le r$.
\item
If $\varphi(x_1,\dots,x_n)$ is $t\ge r$, where $t=t(x_1,\dots, x_n)$ is a real-valued term and $r$ is rational, then $\mathcal{M}\models\varphi[a_1,\dots, a_n]$ if and only if $t^{\mathcal{M}}[a_1,\dots, a_n]\ge r$.
\item
If $\varphi(x_1,\dots,x_n)$ is $(\psi_1\land\psi_2)$, where $\psi_1$ and $\psi_2$ are $L$-formulas, then
$\mathcal{M}\models\varphi[a_1,\dots, a_n]$ if and only if 
\[
\mathcal{M}\models\psi_1[a_1,\dots, a_n]
\quad\text{and}\quad
\mathcal{M}\models\psi_2[a_1,\dots, a_n].
\]
\item
If $\varphi(x_1,\dots,x_n)$ is $(\psi_1\lor\psi_2)$, where $\psi_1$ and $\psi_2$ are $L$-formulas, then
$\mathcal{M}\models\varphi[a_1,\dots, a_n]$ if and only if 
\[
\mathcal{M}\models\psi_1[a_1,\dots, a_n]
\quad\text{or}\quad
\mathcal{M}\models\psi_2[a_1,\dots, a_n].
\]
\item
If $\varphi(x_1,\dots,x_n)$ is $\exists_r x \, \psi(x, x_1,\dots,x_n)$, where $r$ is a positive rational, $x$ is a variable of sort $s$, and $\psi(x, x_1,\dots,x_n)$ is an $L$-formula, then
$\mathcal{M}\models\varphi[a_1,\dots, a_n]$ if and only if 
\[
\mathcal{M}\models\psi[a,a_1,\dots, a_n]
\quad\text{for some $a\in B_{M^{(s)}}[r]$}.
\]
\item
If $\varphi(x_1,\dots,x_n)$ is $\forall\!_r x\, \psi(x, x_1,\dots,x_n)$, where $r$ is a positive rational, $x$ is a variable of sort $s$, and $\psi(x, x_1,\dots,x_n)$ is an $L$-formula, then
$\mathcal{M}\models\varphi[a_1,\dots, a_n]$ if and only if 
\[
\mathcal{M}\models\psi[a,a_1,\dots, a_n]
\quad\text{for every $a\in B_{M^{(s)}}(r)$}.
\]
\end{enumerate}
If $\mathcal{M}\models\varphi[a_1,\dots, a_n]$, we say that $\mathcal{M}$ \emph{satisfies} $\varphi$ at $a_1,\dots, a_n$.
\end{definition}
Note that universal (resp., existential) quantification is interpreted in open (resp., closed) balls.

\begin{definition}
If $\Phi$ is a set of formulas, we denote it by $\Phi(x_1,\dots, x_n)$ if all the free variables of all the formulas in $\Phi$ are among $x_1,\dots, x_n$. 
If  $\mathcal{M}$ is a structure and $a_1,\dots, a_n$ are elements of $\mathcal{M}$, we write $\mathcal{M}\models\Phi[a_1,\dots, a_n]$ if $\mathcal{M}\models\varphi[a_1,\dots, a_n]$ for every $\varphi\in\Phi$.

\end{definition}

\subsection{Approximations and approximate satisfaction}
\label{sec:approximations}
We begin this subsection by defining a strict partial ordering of positive bounded $L$-formulas, namely the relation \emph{``$\psi$ is an approximation of~$\varphi$'',} denoted $\varphi\appby\psi$ (or $\psi\appto\varphi$). 
Roughly speaking, this means that $\psi$ arises when every estimate occurring in~$\varphi$ is relaxed. 
The formal definition of the approximation relation is by induction on the complexity of $\varphi$, as given by the following table.

\begin{center}
  \renewcommand{\arraystretch}{1.50}
\begin{tabular}{lll}
\underline{If $\varphi$ is:} &\qquad 
&\underline{The approximations of $\varphi$ are:}\\
$t \le r$ &\qquad  &$t \le r'$\quad where $r'>r$\\
$t \ge r$ &\qquad  &$t \ge r'$\quad where $r'<r$\\
$(\xi\land\psi)$ &\qquad  &$(\xi'\land\psi')$\quad
where $\xi \appby \xi'$ and $\psi \appby \psi'$\\
$(\xi\lor\psi)$ &\qquad  &$(\xi'\lor\psi')$\quad
where $\xi \appby \xi'$ and $\psi \appby \psi'$\\
$\exists_r x\, \psi$ &\qquad
&$\exists_{r'} x\, \psi'$\quad where $\psi \appby \psi'$ and $r'>r$\\
$\forall\!_r x\, \psi$ &\qquad
&$\forall\!_{r'} x\, \psi'$\quad where $\psi \appby \psi'$ and $r'<r$
\end{tabular}
\end{center}

\begin{definition}
Let $\mathcal{M}$ be an $L$-structure based on $(M^{(s)} \mid s\in\mathbf{S})$ and let $\varphi(x_1,\dots,x_n)$ be an $L$-formula, where $x_i$ is a variable of sort $s_i$, for $i=1,\dots, n$. 
If $a_1,\dots, a_n$ are elements of $ \mathcal{M}$ such that $a_i$ is of sort $s_i$, for $i=1,\dots, n$, we say that $\mathcal{M}$ \emph{approximately satisfies}  $\varphi$ at $a_1,\dots, a_n$, and write
\begin{equation*}
\mathcal{M}\appmod\varphi[a_1,\dots, a_n],
\end{equation*}
if
\begin{equation*}
\mathcal{M}\models\varphi'[a_1,\dots, a_n], \quad
\text{for every $\varphi'\appto\varphi$}.
\end{equation*}
If $\Phi(x_1,\dots,x_n)$ is a set of formulas, we say that $\mathcal{M}$ \emph{approximately satisfies}  $\Phi$ at $a_1,\dots, a_n$, and write $\mathcal{M}\appmod\Phi[a_1,\dots, a_n]$, if $\mathcal{M}\appmod\varphi[a_1,\dots, a_n]$ for every $\varphi\in\Phi$.
\end{definition}

Clearly, approximate satisfaction is a weaker notion of truth than discrete satisfaction.
For nondiscrete metric space structures, the approximate satisfaction relation $\appmod$ is the ``correct'' notion of truth, in the sense that for these structures it is not the notion of discrete satisfaction, but rather that of approximate satisfaction, that yields a well-behaved model theory. 
(For discrete structures, the two relations are clearly equivalent.)

The negation connective (``$\neg$'') is not allowed in positive bounded formulas.  
However, for every positive bounded formula $\varphi$ there is a positive bounded formula $\wneg\varphi$, called the \emph{weak negation of $\varphi$}, that plays a role analogous to that played by the negation of $\varphi$.  

\begin{definition}
  The unary pseudo-connective $\wneg$ of \emph{weak negation} of $L$-formulas is defined recursively as follows.
\begin{center}
  \renewcommand{\arraystretch}{1.50}
  \begin{tabular}{lll}
    \underline{If $\varphi$ is:} &\qquad 
    &\underline{$\wneg\varphi$ is:}\\
    $t\le r$ &\qquad  &$t \ge r$\\
    $t \ge r$ &\qquad  &$t \le r$\\
    $(\xi\land\psi)$ &\qquad  &$(\wneg\xi\lor\wneg\psi)$\\
    $(\xi\lor\psi)$ &\qquad  &$(\wneg\xi\land\wneg\psi)$\\
    $\forall\!_r x \, \psi$ &\qquad
    &$\exists_r x \wneg\psi$\\
    $\exists_r x \, \psi$ &\qquad
    &$\forall\!_r x \wneg\psi$.\\
  \end{tabular}
\end{center}
\end{definition}

\begin{remarks}\hfill
\label{R:approximations}
\begin{enumerate}
\item If $\varphi,\varphi'$ are positive bounded formulas, then $\varphi\appby\varphi'$ if and only if $\wneg\varphi'\appby\wneg\varphi$. 
\item Although languages for metric structures do not include a connective interpreted as the implication ``\emph{if $\varphi$ then $\psi$}'', a formula of the form ``$\wneg\varphi \vee \psi$'' may be regarded as a weak conditional.
\item If $\mathcal{M}$ is an $L$-structure and $\varphi(x_1,\dots,x_n)$ is a positive bounded $L$-formula such that $\mathcal{M} \not\models \varphi[a_1,\dots,a_n]$, then $\mathcal{M} \models \wneg\varphi[a_1,\dots,a_n]$. If $\varphi'$ is an approximation of $\varphi$ such that $\mathcal{M} \models \wneg\varphi'[a_1,\dots,a_n]$, then $\mathcal{M} \not\models \varphi[a_1,\dots,a_n]$.
\end{enumerate}
\end{remarks}

\begin{proposition}
\label{P:negation}
Let $\mathcal{M}$ be an $L$-structure based on $(M^{(s)} \mid s\in\mathbf{S})$, let $\varphi(x_1,\dots,x_n)$ be an $L$-formula, where $x_i$ is a variable of sort $s_i$, for $i=1,\dots, n$, and let $a_1,\dots, a_n$ be elements of $\mathcal{M}$ such that $a_i$ is of sort $s_i$, for $i=1,\dots, n$. 
Then, $\cM\not\appmod \varphi[a_1,\dots,a_n]$ if and only there exists a formula $\varphi'\appto\varphi$ such that $\cM\appmod\wneg\varphi'[a_1,\dots,a_n]$.
\end{proposition}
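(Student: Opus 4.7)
The plan is to avoid a fresh induction on $\varphi$ in the proposition itself, and instead extract three ancillary facts (each proved by a routine structural induction) that combine quickly.

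First, I would establish monotonicity of discrete satisfaction along approximations: if $\xi\appby\xi'$ and $\cM\models\xi[a_1,\dots,a_n]$, then $\cM\models\xi'[a_1,\dots,a_n]$. The induction handles the atomic cases $t\le r$ and $t\ge r$ directly (the rational threshold is relaxed), the Boolean cases routinely, and the quantifier cases by observing that $\exists_r x\,\psi\appby\exists_{r'} x\,\psi'$ enlarges the closed ball from $B[r]$ to $B[r']$ and weakens the matrix (dually for $\forall\!_r$). An immediate consequence is that $\cM\models\xi[a_1,\dots,a_n]$ implies $\cM\appmod\xi[a_1,\dots,a_n]$. Second, I would establish the density of $\appby$: for every $\varphi\appby\varphi'$ there exists $\varphi''$ with $\varphi\appby\varphi''\appby\varphi'$. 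This follows by induction on $\varphi$, using density of the rationals at atomic and quantifier clauses, and the inductive hypothesis at Boolean clauses. Third, I would simply cite Remark~\ref{R:approximations}(1) and~(3), both already in place.

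For the forward implication, suppose $\cM\not\appmod\varphi[a_1,\dots,a_n]$. By definition of $\appmod$, there is $\varphi^*\appto\varphi$ with $\cM\not\models\varphi^*[a_1,\dots,a_n]$, and so $\cM\models\wneg\varphi^*[a_1,\dots,a_n]$ by the first half of Remark~\ref{R:approximations}(3). Use density to interpolate $\varphi'$ with $\varphi\appby\varphi'\appby\varphi^*$. Remark~\ref{R:approximations}(1) converts $\varphi'\appby\varphi^*$ into $\wneg\varphi^*\appby\wneg\varphi'$, so monotonicity applied to $\cM\models\wneg\varphi^*[a_1,\dots,a_n]$ yields $\cM\models\wneg\varphi'[a_1,\dots,a_n]$. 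The corollary to monotonicity then upgrades this to $\cM\appmod\wneg\varphi'[a_1,\dots,a_n]$, exhibiting the required $\varphi'\appto\varphi$.

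For the reverse implication, suppose $\cM\appmod\wneg\varphi'[a_1,\dots,a_n]$ with $\varphi'\appto\varphi$. Apply density twice to produce a chain $\varphi\appby\varphi_0\appby\varphi_1\appby\varphi'$. Since $\varphi_1\appby\varphi'$, Remark~\ref{R:approximations}(1) gives $\wneg\varphi'\appby\wneg\varphi_1$, making $\wneg\varphi_1$ an approximation of $\wneg\varphi'$; hence $\cM\models\wneg\varphi_1[a_1,\dots,a_n]$ by the assumption on $\wneg\varphi'$. Applying the second half of Remark~\ref{R:approximations}(3) to the pair $\varphi_0\appby\varphi_1$, we conclude $\cM\not\models\varphi_0[a_1,\dots,a_n]$. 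Since $\varphi_0\appto\varphi$, this is the failing approximation of $\varphi$ demanded by $\cM\not\appmod\varphi[a_1,\dots,a_n]$. The principal obstacle is purely bookkeeping: weak negation reverses the direction of $\appby$, so the density interpolations must be placed with care---once on the $\varphi$-side for the forward direction, and twice for the reverse direction (to leave room for one $\wneg$-flip and one invocation of the second half of Remark~\ref{R:approximations}(3)).
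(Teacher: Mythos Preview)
Your proof is correct and follows essentially the same approach as the paper's. The paper's argument is slightly leaner: in the forward direction it takes $\varphi'$ to be the very approximation $\varphi^*$ witnessing $\cM\not\models\varphi^*$ (no interpolation needed), passing directly from $\cM\models\wneg\varphi^*$ to $\cM\appmod\wneg\varphi^*$; your interpolation of an intermediate $\varphi'$ there is harmless but unnecessary. For the reverse direction both proofs use the same double-interpolation $\varphi\appby\psi\appby\psi'\appby\varphi'$ and the same two applications of Remark~\ref{R:approximations}. Your explicit isolation of monotonicity and density as ancillary lemmas simply makes visible what the paper uses tacitly.
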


\begin{proof}
  In order to simplify the nomenclature, let us suppress the lists $x_1,\dots,x_n$ and $a_1,\dots,a_n$ from the notation.
    
  If $\mathcal{M}\not\appmod \varphi$, there exists $\varphi'\appto\varphi$ such that $\mathcal{M}\not\models \varphi'$.  
We have $\mathcal{M}\models \wneg\varphi'$ and hence $\mathcal{M}\appmod \wneg\varphi'$.  
Conversely, assume that there exists $\varphi'\appto\varphi$ such that $\mathcal{M} \appmod \wneg\varphi'$.  
Take sentences $\psi,\psi'$ such that $\varphi\appby\psi\appby\psi'\appby\varphi'$.  
Then $\mathcal{M}\models\wneg\psi'$ (by Remark~\ref{R:approximations}-(3)) and hence $\mathcal{M} \not\models\psi$, so $\mathcal{M} \not\appmod \varphi$.
\end{proof}

\subsection{Theories, elementary equivalence, and elementary substructures}
\label{sec:substructures}
In this section we include some basic definitions from model theory.

\begin{definition}
	\label{D:model}
	Let $L$ be a signature. 
	
	\begin{enumerate}
		\item
		An \emph{$L$-theory} (or simply a \emph{theory}) is a set of $L$-sentences. 
		\item
		If $T$ is a theory and $\mathcal{M}\appmod T$, we say that $\mathcal{M}$ is a \emph{model} of $T$. 
A theory is \emph{satisfiable} (or \emph{consistent}) if it has a model.
The class of all models of a theory~$T$ is denoted $\Mod(T)$.
If $\mathscr{C}$ is any class of $L$-structures, the class $\mathscr{C}\cap\Mod(T)$ of models of~$T$ in~$\mathscr{C}$ is denoted $\Mod_{\mathscr{C}}(T)$.
              \item An \emph{axiomatizable class $\sC$} (or \emph{elementary class}) is one that consists of all the models of a fixed theory~$T$. 
We say that $\sC$ \emph{is $L$-axiomatizable}, or \emph{$\sC$ is axiomatizable by~$T$,} when the language or the theory need to be specified.

              \item An $L$-theory~$T$ is \emph{uniform} if the class of all models of~$T$ is uniform.
(See Proposition~\ref{prop:uniformity-axiomatizable}.)

		\item
		The \emph{complete $L$-theory of a structure~$\mathcal{M}$}, denoted $\thry{(\mathcal{M})}$, is the set of all $L$-sentences $\varphi$ such that $\mathcal{M}\appmod\varphi$. 
A \emph{complete $L$-theory} is the complete $L$-theory of any $L$-structure~$\cM$.
\item 		The \emph{complete $L$-theory of class $\sC$ of $L$-structures} is $\thry{(\sC)} = \bigcap_{\cM\in\sC}\thry(\cM)$. 
		\item
		Two $L$-structures $\mathcal{M},\mathcal{N}$ are \emph{elementarily equivalent}, written $\mathcal{M}\equiv\mathcal{N}$, if they have the same complete theory, i.e., if
		\[
			\mathcal{M}\appmod\varphi \quad\Leftrightarrow\quad\mathcal{N}\appmod\varphi,\qquad
\text{for every $L$-sentence $\varphi$.}
		\]
		
		\item
		 If $\mathcal{M}$ and $\mathcal{N}$ are  $L$-structures and  $\mathcal{M}$ is a substructure of $\mathcal{N}$, we say that $\mathcal{M}$ is an \emph{elementary substructure} of $\mathcal{N}$, and we write $\mathcal{M}\prec\mathcal{N}$, if whenever $a_1,\dots, a_n$ are elements of $\mathcal{M}$ and $\varphi(x_1,\dots, x_n)$ is an $L$-formula such that $a_i$ is of the same sort as $x_i$, for $i=1,\dots, n$, we have
		\[
		\mathcal{M}\appmod\varphi[a_1,\dots, a_n] \quad\Leftrightarrow\quad\mathcal{N}\appmod\varphi[a_1,\dots, a_n].
		\]
	\end{enumerate}
\end{definition}

\begin{remarks}\label{rem:ThM-uniform}
  \begin{enumerate}
  \item When $\cM$ is an $L$-structure, the interpretation of each function symbol is locally bounded and locally uniformly continuous by definition, hence $\thry(\cM)$ is necessarily a uniform theory.
  \item Any satisfiable theory $T$ admits some extension to a uniform theory $T'=\thry(\cM)$, where $\cM$ is any model of~$T$. 
Neither the extension nor a modulus of uniformity thereof is uniquely determined by~$T$ in general.
  \item If $T'$ extends a $\bU$-uniform theory~$T$, then $T'$ is also $\bU$-uniform. 
  \end{enumerate}
\end{remarks}

\begin{proposition}
  \label{prop:uniformity-axiomatizable}
Let $L$ be a signature. 
Given a modulus of uniformity $\bU$, the class of $L$-structures  $\cM$ such that $\bU$ is a modulus of uniformity for~$\cM$ is axiomatizable.
\end{proposition}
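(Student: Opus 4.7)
I plan to exhibit an explicit positive bounded $L$-theory $T_{\bU}$ whose class of approximate models is precisely the stated class. For each function symbol $f : s_1 \times \cdots \times s_n \to s_0$ of $L$ and each real $r > 0$, $T_{\bU}$ will contain two schemes of axioms --- one encoding the bound $\Omega_{f, r}$ and one encoding the modulus of continuity $\Delta_{f, r}$. The formulas themselves involve only rational numbers (as required by the syntax of positive bounded formulas), and the identity $B(r) = \bigcup_{r' < r,\, r' \in \mathbb{Q}} B(r')$ will allow the indexing parameter $r$ to range over all positive reals.

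For the \emph{equiboundedness axioms}, I take the scheme
\[
\forall_{r'} x_1 \cdots \forall_{r'} x_n \, \bigl(d_{s_0}(f(x_1, \ldots, x_n), a_{s_0}) \le \Omega'\bigr),
\]
indexed by all rational $r' < r$ and all rational $\Omega' > \Omega_{f, r}$, where $a_{s_0}$ denotes the anchor of sort $s_0$. For the \emph{equicontinuity axioms}, I use the weak-conditional trick of Remark~\ref{R:approximations}(2) to encode ``if $\bigwedge_i d(x_i, y_i) \le \delta$ then $d(f(\overline{x}), f(\overline{y})) \le \epsilon$'' as
\[
\forall_{r'} x_1 \cdots \forall_{r'} y_n \left( \bigvee_{i=1}^{n} d(x_i, y_i) \ge \delta \;\vee\; d_{s_0}(f(\overline{x}), f(\overline{y})) \le \epsilon \right),
\]
indexed by all rational $r' < r$, rational $\epsilon > 0$, and rational $\delta < \Delta_{f, r}(\epsilon)$.

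The verification of the equivalence ``$\cM \appmod T_{\bU}$ iff $\bU$ is a modulus of uniformity for $\cM$'' is a routine unpacking. One direction is immediate: if $\bU$ is a modulus for $\cM$, each axiom is discretely satisfied (since $B(r') \subset B(r)$ for $r' < r$) and therefore also approximately satisfied. For the converse, given $\overline{x} \in B(r)$, I pick a rational $r'' < r$ with $\overline{x} \in B(r'')$ and apply approximate satisfaction of the equiboundedness axiom at some rational $r'$ with $r'' < r' < r$ and any rational $\Omega' > \Omega_{f, r}$. Unpacking per Section~\ref{sec:approximations}, this yields $d(f^{\cM}(\overline{x}), a_{s_0}) \le \Omega''$ for every rational $\Omega'' > \Omega'$, hence $d \le \Omega_{f, r}$. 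The equicontinuity argument is analogous: given $\overline{x}, \overline{y} \in B(r)$ with $d(x_i, y_i) < \Delta_{f, r}(\epsilon)$ for all $i$, I choose a rational $\delta$ strictly between $\max_i d(x_i, y_i)$ and $\Delta_{f, r}(\epsilon)$, and unpack the corresponding axiom to obtain $d(f(\overline{x}), f(\overline{y})) \le \epsilon'$ for every rational $\epsilon' > \epsilon$.

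The main subtlety, and the only step requiring genuine attention, is the bookkeeping for the ``double approximation'' built into approximate satisfaction: the outer approximation simultaneously shrinks the quantifier radius $r'$ and relaxes each rational estimate $\Omega'$, $\delta$, $\epsilon$. The identity $B(r) = \bigcup_{r' < r,\, r' \in \mathbb{Q}} B(r')$ together with the density of $\mathbb{Q}$ above $\Omega_{f, r}$ and below $\Delta_{f, r}(\epsilon)$ absorbs this double approximation, recovering the exact uniformity conditions from their approximately-satisfied rational analogues.
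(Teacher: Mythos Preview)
Your approach is essentially the same as the paper's: both write down the obvious positive bounded schemes for equiboundedness and for equicontinuity (via the weak-conditional trick $\wneg\varphi\vee\psi$), indexed by the function symbol, the radius, and rational approximants to the bounds. Your verification is in fact considerably more detailed than the paper's, which simply displays the sentences $\chi_{f,u,v}$ and $\xi_{f,u,v,w}$ and declares the equivalence clear.

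One small bookkeeping point: you index your equicontinuity axioms by \emph{rational} $\epsilon>0$ and rational $\delta<\Delta_{f,r}(\epsilon)$, whereas the paper lets the inner parameter $t$ range over all of $(0,\infty)$. Since the modulus $\Delta_{f,r}$ is not assumed monotone or continuous, your scheme as written need not recover the modulus condition at an irrational $\epsilon_0$ (there may be no rational $\epsilon$ with $\Delta_{f,r}(\epsilon)$ large enough to cover the given pair $\overline{x},\overline{y}$). The fix is immediate---let $\epsilon$ range over all positive reals and place a rational $\epsilon'>\epsilon$ in the formula itself---and does not affect the substance of your argument.
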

\begin{proof}
Let $\bU = (\Omega_{r,f},\Delta_{r,f} \mid r>0, f\in \mathbf{F})$ be a modulus of uniformity.
For a  function symbol~$f : s_1\times\dots\times s_n \to s_0$ and rational numbers $u,v,w>0$, consider the $L$-sentences
  \begin{align*}
    \chi_{f,u,v} \ : \ \ &\forall\!_ux_1\dots\forall\!_ux_n \big(d(f(x_1,\dots,x_n),a_0) \le v\big),\\
\xi_{f,u,v,w} \ : \ \ &\forall\!_ux_1\forall\!_uy_1\dots\forall\!_ux_n\forall\!_uy_n 
\big( \overline{d}(\overline{x},\overline{y}) \ge v \,\vee\, d(f(\overline{x}),f(\overline{y})) \le w \big),
  \end{align*}
where $\overline{x},\overline{y}$ denote the $n$-tuples $x_1,\dots,x_n$ and $y_1,\dots,y_n$, respectively,  $\overline{d}$ denotes the supremum distance on $\cM^{(s_1)}\times\dots\times\cM^{(s_n)}$, and $a_0$ the constant symbol for the anchor of the sort indexed by~$s_0$.
It should be clear that  for any signature $L$, the class of $L$-structures  $\cM$ such that $\bU$ is a modulus of uniformity for~$\cM$ is axiomatized by the union of the following sets of sentences:
\begin{align*}
   & \{\, \chi_{f,u,v} \mid \text{$f\in \mathbf{F}$, $u,v\in\QQ_+$, $u<r$, $v>\Omega_{r,f}$ for some $r\in(0,\infty)$}\, \}, \\
    &\{\, \xi_{f,u,v,w} \mid \text{$f\in \mathbf{F}$, $u,v\in\QQ_+$, $u<r$, $v<t$, $w>\Delta_{r,f}(t)$ for some $r,t\in(0,\infty)$}\, \}. \qedhere
\end{align*}
\end{proof}

It is easy to construct examples showing that the meta-property ``\emph{the class~$\sC$ is uniform}'' (without specifying a modulus of uniformity) is not axiomatizable in general.

The following proposition gives simple conditions to verify $\equiv$ and $\prec$. 

\begin{proposition}
\label{P:elementary}
Let $\mathcal{M}$ and $\mathcal{N}$ be $L$-structures.
\begin{enumerate}[\normalfont(1)]
\item
\label{I:elementary equivalence}
$\mathcal M\equiv\mathcal{N}$ if and only if for every $L$-sentence $\varphi$,
\[
\mathcal M\appmod \varphi \quad\Rightarrow\quad \mathcal{N}\appmod \varphi.
\]
\item
\label{I:Tarski-Vaught}
\emph{(Tarski-Vaught test for $\prec$).}
A substructure $\mathcal{M}$ of the structure $\mathcal{N}$ is an elementary substructure if and only if the following condition holds: 
If $\varphi(x_1,\dots, x_n,y)$ is an $L$-formula, $a_1,\dots, a_n$ are elements of $\mathcal{M}$ with $a_i$ of the same sort as $x_i$  for $i=1,\dots, n$
such that $\mathcal{N}\appmod\exists_ry\,\varphi[a_1,\dots, a_n]$ for some $r>0$, then 
$\mathcal{M}\appmod\exists_ry\,\varphi[a_1,\dots, a_n]$.
\end{enumerate}
\end{proposition}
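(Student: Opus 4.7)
Part~(1) is a quick consequence of Proposition~\ref{P:negation}. The forward implication is trivial from the definition of~$\equiv$. For the converse, suppose $\cM \appmod \varphi \Rightarrow \cN \appmod \varphi$ holds for every $L$-sentence~$\varphi$, and assume toward a contradiction that $\cN \appmod \varphi_0$ while $\cM \not\appmod \varphi_0$. By Proposition~\ref{P:negation} there is an approximation $\varphi_0' \appto \varphi_0$ with $\cM \appmod \wneg\varphi_0'$; applying the hypothesis to the sentence~$\wneg\varphi_0'$ yields $\cN \appmod \wneg\varphi_0'$, and Proposition~\ref{P:negation} used contrapositively then gives $\cN \not\appmod \varphi_0$, a contradiction.

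For part~(2), the forward direction is again immediate. For the converse, I plan to show by induction on the complexity of a positive bounded formula $\varphi(x_1,\dots,x_n)$ that $\cM \appmod \varphi[\bar a] \Leftrightarrow \cN \appmod \varphi[\bar a]$ for every tuple~$\bar a$ from~$\cM$ of matching sorts. The induction rests on a routine preliminary monotonicity lemma, proved by a sub-induction on complexity: if $\varphi \appby \varphi'$, then $\cM \models \varphi[\bar a] \Rightarrow \cM \models \varphi'[\bar a]$ in any structure~$\cM$. As an immediate consequence, $\cM \models \varphi[\bar a]$ implies $\cM \appmod \varphi[\bar a]$.

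Atomic formulas are handled by the substructure hypothesis (terms evaluate identically on tuples from~$\cM$, and $\appmod$ coincides with $\models$ on atomic formulas), and conjunctions and disjunctions follow directly from the inductive hypothesis. In the existential case~$\exists_r y\,\psi$, the direction $\cN \appmod \Rightarrow \cM \appmod$ is exactly the Tarski-Vaught hypothesis. For the converse direction, given any approximation $\exists_{r'} y\,\psi'$ of $\exists_r y\,\psi$, I interpolate $\psi \appby \psi_1 \appby \psi'$ and $r < r_1 < r'$, extract from $\cM \models \exists_{r_1} y\, \psi_1[\bar a]$ a witness $b \in \cM$ with $\cM \models \psi_1[b,\bar a]$, promote to $\cM \appmod \psi_1[b,\bar a]$ via monotonicity, transfer to $\cN \appmod \psi_1[b,\bar a]$ by the inductive hypothesis applied to~$\psi_1$ (whose formula complexity equals that of~$\psi$), and specialize along $\psi_1 \appby \psi'$ to obtain $\cN \models \psi'[b,\bar a]$. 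The universal direction $\cN \appmod \Rightarrow \cM \appmod$ mirrors this construction symmetrically, producing the witness $b$ in~$\cM$ and transferring from~$\cN$ to~$\cM$ via the inductive hypothesis on approximations of~$\psi$.

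The main obstacle is the remaining universal direction $\cM \appmod \forall\!_r y\,\psi[\bar a] \Rightarrow \cN \appmod \forall\!_r y\,\psi[\bar a]$, since a direct witness-chasing argument would require working with elements of~$\cN$ not necessarily in~$\cM$. I plan to sidestep this via weak negation: if $\cN \not\appmod \forall\!_r y\,\psi[\bar a]$, Proposition~\ref{P:negation} produces an approximation $\forall\!_{r'} y\, \psi'$ of $\forall\!_r y\,\psi$ with $\cN \appmod \wneg \forall\!_{r'} y\, \psi'[\bar a]$, which by the definition of weak negation is $\cN \appmod \exists_{r'} y\, \wneg\psi'[\bar a]$. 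The Tarski-Vaught hypothesis then delivers $\cM \appmod \exists_{r'} y\, \wneg\psi'[\bar a]$, equivalently $\cM \appmod \wneg \forall\!_{r'} y\, \psi'[\bar a]$; this contradicts $\cM \appmod \forall\!_r y\,\psi[\bar a]$ via Proposition~\ref{P:negation} applied to the approximation $\forall\!_{r'} y\, \psi'$ of~$\forall\!_r y\,\psi$.
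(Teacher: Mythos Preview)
Your proposal is correct and follows essentially the same approach as the paper: part~(1) via Proposition~\ref{P:negation}, part~(2) by induction on the complexity of~$\varphi$. The paper's proof is extremely terse (literally one line per part), while you have correctly filled in the details, including the weak-negation trick to handle the upward direction of the universal case---which is exactly the natural way to reduce that step to the Tarski--Vaught hypothesis.
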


\begin{proof}
For part (\ref{I:elementary equivalence}), the direct implication follows by definition of elementary equivalence, while the converse follows from Proposition~\ref{P:negation}. 
For (\ref{I:Tarski-Vaught}), the direct implication is trivial, and the converse follows by induction on the complexity of~$\varphi$.
\end{proof}

The following is an immediate consequence of Proposition~\ref{P:elementary}-(\ref{I:Tarski-Vaught}):

\begin{proposition}[Downward L\"owenheim-Skolem Theorem]
\label{P:downward lowenheim-skolem}
Let $\mathcal M$ be a structure and let $A$ be a set of elements of $\mathcal M$. 
Then there exists a substructure $\mathcal{M}_0$ of $\mathcal{M}$ such that
\begin{itemize}
\item
$\mathcal{M}_0\prec\mathcal{M}$,
\item
Every element of $A$ is an element of $\mathcal{M}_0$,
\item
$\card(\mathcal{M}_0)\le\card(A)+\card(L)$.
\end{itemize}
\end{proposition}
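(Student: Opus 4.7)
My plan is to prove this Downward L\"owenheim--Skolem theorem by the standard Skolem hull construction, adapted to Henson's metric setting, and then invoke the Tarski--Vaught criterion stated in Proposition~\ref{P:elementary}, part~(\ref{I:Tarski-Vaught}).

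First I would define Skolem functions indexed by pairs (formula, approximation). For each positive bounded $L$-formula $\chi(x_1, \dots, x_n)$ of the form $\exists_s y\,\varphi(x_1, \dots, x_n, y)$ and each approximation $\chi'(x_1, \dots, x_n) = \exists_{s'} y\,\varphi'(x_1, \dots, x_n, y)$ of $\chi$, I would use the axiom of choice to select a function $f_{\chi,\chi'}$ such that, for each tuple $\bar a = (a_1, \dots, a_n)$ satisfying $\mathcal{M} \appmod \chi[\bar a]$, the value $f_{\chi,\chi'}(\bar a)$ lies within distance $s'$ of the anchor of the sort of $y$ and satisfies $\mathcal{M} \models \varphi'[\bar a, f_{\chi,\chi'}(\bar a)]$. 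Such a witness exists because $\mathcal{M} \appmod \chi[\bar a]$ entails $\mathcal{M} \models \chi'[\bar a]$. Extend $f_{\chi,\chi'}$ to a total function arbitrarily. Then let $\mathcal{M}_0$ be the substructure of $\mathcal{M}$ obtained by closing $A$ under all interpretations of function symbols of $L$ together with all the Skolem functions $f_{\chi,\chi'}$. Since the number of positive bounded $L$-formulas is $\card(L)$ and each such formula admits only countably many approximations (only finitely many rational parameters can be relaxed), the total number of Skolem functions is $\card(L)$; iterating the closure $\omega$-many times yields $\card(\mathcal{M}_0) \le \card(A) + \card(L)$.

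Next I would verify the Tarski--Vaught criterion. Suppose $\mathcal{M} \appmod \exists_r y\,\varphi[\bar a]$ for some tuple $\bar a = (a_1, \dots, a_n)$ in $\mathcal{M}_0$ and some $r > 0$; the goal is to show $\mathcal{M}_0 \appmod \exists_r y\,\varphi[\bar a]$, i.e., $\mathcal{M}_0 \models \exists_{r'} y\,\varphi'[\bar a]$ for every approximation $\exists_{r'} y\,\varphi'$ of $\exists_r y\,\varphi$. The corresponding Skolem function delivers an element $b := f_{\exists_r y\,\varphi, \exists_{r'} y\,\varphi'}(\bar a) \in \mathcal{M}_0$ within distance $r'$ of the anchor of the sort of $y$, with $\mathcal{M} \models \varphi'[\bar a, b]$.

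The principal obstacle will be to transfer the relation $\mathcal{M} \models \varphi'[\bar a, b]$ down to $\mathcal{M}_0 \models \varphi'[\bar a, b]$ when $\varphi'$ itself contains nested quantifiers. I would handle this via the standard simultaneous induction on formula complexity that underpins the proof of the Tarski--Vaught test: atomic formulas transfer because $\mathcal{M}_0$ is a substructure of $\mathcal{M}$; conjunctions, disjunctions, and bounded universal quantifiers follow from the inductive hypothesis---with the metric subtlety that the inductive claim is most naturally phrased in terms of $\appmod$ rather than $\models$ so that the universal-quantifier case goes through; and bounded existential subformulas are handled by Skolem functions of strictly lower complexity, which supply witnesses inside $\mathcal{M}_0$. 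The metric-specific wrinkle, namely that Skolem witnesses must be indexed by (formula, approximation) pairs rather than by formula alone, is precisely offset by the countability of approximations of each formula, keeping the cardinality bookkeeping within the stated bound.
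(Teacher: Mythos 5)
The paper offers no proof of this proposition, stating it simply as an immediate consequence of the Tarski--Vaught test (Proposition~\ref{P:elementary}). Your Skolem-hull construction, with Skolem functions indexed by pairs $(\chi,\chi')$ of a formula and an approximation, is the standard and correct way to fill in the argument, and the cardinality bookkeeping is right. However, the step where you transfer $\mathcal{M}\models\varphi'[\bar a, b]$ to $\mathcal{M}_0\models\varphi'[\bar a, b]$ would fail as written. As you note, the simultaneous induction must be phrased in terms of $\appmod$, so it relates $\mathcal{M}\appmod\psi$ and $\mathcal{M}_0\appmod\psi$; from $\mathcal{M}\models\varphi'[\bar a, b]$ you obtain $\mathcal{M}\appmod\varphi'[\bar a, b]$ and, by the inductive hypothesis, $\mathcal{M}_0\appmod\varphi'[\bar a, b]$---but this is strictly weaker than the discrete satisfaction $\mathcal{M}_0\models\varphi'[\bar a, b]$ that the definition of $\mathcal{M}_0\appmod\exists_r y\,\varphi[\bar a]$ actually requires (approximate satisfaction of an existential subformula does not supply a single fixed witness).

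The repair is exactly the interpolation that the $(\chi,\chi')$-indexing is designed to enable. Given the target approximation $\chi'=\exists_{r'}y\,\varphi'$ of $\chi=\exists_ry\,\varphi$, choose a \emph{strict} intermediate approximation $\chi''=\exists_{r''}y\,\varphi''$ with $\chi\appby\chi''\appby\chi'$ (so $r<r''<r'$ and $\varphi\appby\varphi''\appby\varphi'$), and take $b := f_{\chi,\chi''}(\bar a)\in\mathcal{M}_0$, which lies within distance $r''<r'$ of the anchor of its sort and satisfies $\mathcal{M}\models\varphi''[\bar a, b]$, hence $\mathcal{M}\appmod\varphi''[\bar a,b]$. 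The inductive hypothesis applied to $\varphi''$ (same complexity as $\varphi$, hence lower than $\chi$) yields $\mathcal{M}_0\appmod\varphi''[\bar a, b]$, and because $\varphi'$ is an approximation of $\varphi''$, this delivers the required $\mathcal{M}_0\models\varphi'[\bar a, b]$. The same interpolation is needed at the analogous points of the induction (in particular in the universal-quantifier case). With this adjustment your argument is correct.
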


\begin{definition}
An \emph{elementary chain} is a family $(\mathcal{M}_i)_{i\in I}$ of structures, indexed by some linearly ordered set~$(I,<)$, such that $\mathcal{M}_i\prec\mathcal{M}_j$ if $i<j$.
\end{definition}

Another useful consequence of Proposition~\ref{P:elementary} is the elementary chain property:

\begin{proposition}[The Elementary Chain Property]
\label{P:elementary chain}
If $(\mathcal{M}_i)_{i\in I}$ is an elementary chain, then $\bigcup_{i\in I} \mathcal{M}_i$ is an elementary extension of $\mathcal{M}_j$ for every $j\in I$.
\end{proposition}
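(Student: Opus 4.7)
The plan is to first construct the union $\cM=\bigcup_{i\in I}\cM_i$ in the obvious way—each sort $M^{(s)}$ is the set-theoretic union $\bigcup_i M_i^{(s)}$, each function symbol $f$ is interpreted as the common extension of the $f^{\cM_i}$, and the anchor and metric of each sort descend from the common values in the chain—and to verify that this yields a bona fide $L$-structure. Coherence follows from the substructure inclusions $\cM_i\subseteq\cM_k$ whenever $i\le k$. For local uniform continuity and local boundedness, I observe that since all $\cM_i$ share the same complete theory $T=\Th(\cM_{j_0})$, which is uniform by Remark~\ref{rem:ThM-uniform}, Proposition~\ref{prop:uniformity-axiomatizable} provides a common modulus of uniformity $\bU$ valid for every $\cM_i$; since any finite tuple from $\cM$ lies in a single $\cM_k$ (by linearity of $I$), $\bU$ is automatically a modulus for the interpretations of function symbols in $\cM$. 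By construction, each $\cM_j$ is a substructure of $\cM$.

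The heart of the proof is to establish, by induction on the complexity of a positive bounded $L$-formula $\varphi(\bar x)$, the following approximate-transfer statement: for every $j\in I$, every tuple $\bar a$ from $\cM_j$ of matching sorts, and every approximation $\varphi\appby\hat\varphi$,
\begin{equation*}
\cM_j\models\varphi[\bar a]\ \Longrightarrow\ \cM\models\hat\varphi[\bar a]
\qquad\text{and}\qquad
\cM\models\varphi[\bar a]\ \Longrightarrow\ \cM_j\models\hat\varphi[\bar a].
\end{equation*}
Granted this, the desired $\cM_j\appmod\varphi[\bar a]\iff\cM\appmod\varphi[\bar a]$ follows by inserting an intermediate approximation $\varphi\appby\psi\appby\hat\varphi$ and applying the statement to the pair $\psi\appby\hat\varphi$, which yields $\cM_j\prec\cM$. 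The atomic case is immediate since $t^{\cM_j}[\bar a]=t^{\cM}[\bar a]$ for every term $t$, and the Boolean cases are routine.

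The main obstacle is the quantifier case, say $\varphi=\exists_r y\,\psi$ with approximation $\hat\varphi=\exists_{\hat r}y\,\hat\psi$. The forward direction is straightforward: a witness $b\in\cM_j$ for $\varphi$ remains a witness for $\hat\varphi$ in $\cM$, via the inductive hypothesis on $\psi$. The backward direction is the delicate point: given $\cM\models\varphi[\bar a]$, take a witness $b\in\cM$ with $d(b,a_0)\le r$ and $\cM\models\psi[b,\bar a]$, and use linearity of $I$ to select $k\ge j$ with $b\in\cM_k$. Interpolate radii $r<r_1<r_2<r_3<\hat r$ and formulas $\psi\appby\psi_1\appby\psi_2\appby\psi_3\appby\hat\psi$. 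The induction hypothesis applied to $\psi\appby\psi_1$ yields $\cM_k\models\psi_1[b,\bar a]$, hence $\cM_k\models\exists_{r_1}y\,\psi_1[\bar a]$. The standard monotonicity that $\cM_k\models\chi$ implies $\cM_k\appmod\chi'$ whenever $\chi\appby\chi'$ then gives $\cM_k\appmod\exists_{r_2}y\,\psi_2[\bar a]$; the elementary chain hypothesis $\cM_j\prec\cM_k$ (valid since $j\le k$) transports this to $\cM_j\appmod\exists_{r_2}y\,\psi_2[\bar a]$, whence $\cM_j\models\exists_{r_3}y\,\psi_3[\bar a]$ directly from the definition of $\appmod$, and finally $\cM_j\models\hat\varphi[\bar a]$ by one more use of monotonicity. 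The universal case is entirely dual. The delicate interleaving of approximations here is the mechanism by which the relation $\prec$ between consecutive chain members—which is phrased via $\appmod$—is spliced together with the exact $\models$-witnessing that happens inside $\cM$.
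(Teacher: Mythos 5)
Your proof is correct, and at the level of ideas it is the same argument the paper has in mind: trace an $\cM$-witness back into some $\cM_k$ with $k\ge j$, and exploit $\cM_j\prec\cM_k$ to pull the approximate conclusion down to $\cM_j$. The paper compresses all of this into the one-line invocation of Proposition~\ref{P:elementary} (the Tarski--Vaught test); you do the full induction directly, interleaving the $\appby$-refinements needed to convert between $\models$ and $\appmod$. The only organizational difference is that Tarski--Vaught lets one verify only the existential criterion, whereas you verify the full back-and-forth; this costs little because the universal direction, as you note, is handled dually. Your preliminary paragraph establishing that $\cM$ is a genuine $L$-structure---via elementary equivalence across the chain, uniformity of $\Th(\cM_{j_0})$, and locality of tuples---is a point the paper leaves entirely implicit and is worth having spelled out.
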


\begin{proof}
By Proposition~\ref{P:elementary}.
\end{proof}

\subsection{\Los' Theorem}
\label{sec:Los}
The following fundamental theorem, proved by J.~\Los\ in the 1950s~\cite{Los:1955}, intuitively states that a formula $\varphi$ is satisfied by an ultraproduct of a family  $(\mathcal{M}_\lambda)_{\lambda\in \Lambda}$ of structures if and only if every approximation of $\varphi$ is satisfied by almost all of the structures $\mathcal{M}_\lambda$. 
\Los\ proved the theorem for discrete structures (i.e., traditional first-order logic), where approximations are not needed; 
however, essentially the same argument holds for arbitrary metric structures.

\begin{theorem}[\Los' Theorem for Metric Structures]
\label{T:Los}
Let $L$ be a signature and let  $(\mathcal{M}_\lambda)_{\lambda\in \Lambda}$ be a family of $L$-structures in a uniform class such that for each $\lambda\in \Lambda$ the structure $\mathcal{M}_\lambda$ is based on $(M_\lambda^{(s)} \mid s\in\mathbf{S})$. 
Let $(a_{1,\lambda})_{\lambda\in\Lambda},\dots, (a_{n,\lambda})_{\lambda\in\Lambda}$ be such that of $(a_{i,\lambda})_{\lambda\in\Lambda}\in\ell^\infty(M_\lambda^{(s_i)})_{\Lambda}$ for $i=1,\dots, n$    and let $\varphi(x_1,\dots, x_n)$ be an $L$-formula such that $x_i$ is of sort $s_i$. 
Then, for any ultrafilter $\mathcal{U}$ on $\Lambda$, 
\[
(\, \prod_{\lambda\in\Lambda} \mathcal{M}_\lambda \,)_{\mathcal U} \appmod
\varphi[\,((a_{1,\lambda})_{\lambda\in\Lambda})_{\mathcal U},\dots, ((a_{n,\lambda})_{\lambda\in\Lambda})_{\mathcal U}\,]
\]
if and only if for every approximation $\varphi'\appto\varphi$, the set
\[
\{\,\lambda\in\Lambda \mid \mathcal{M}_\lambda\models\varphi'[a_{1,\lambda},\dots, a_{n,\lambda}]\,\}
\]
is $\mathcal{U}$-large.

\end{theorem}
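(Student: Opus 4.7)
The plan is the standard proof by induction on the complexity of the formula $\varphi$, generalized from the discrete case by systematically exploiting the slack present in approximations (i.e., every approximation loosens some rational bound). The engine of the argument is Proposition~\ref{prop:ultraproduct} together with the identification of interpretations in the ultraproduct as $\cU$-limits of real numbers. I first establish an auxiliary claim about terms: for any $L$-term $t(x_1,\dots,x_n)$ whose inputs range over bounded families,
\begin{equation*}
t^{(\prod_{\lambda}\cM_{\lambda})_{\cU}}\bigl[((a_{1,\lambda})_{\lambda})_{\cU},\dots,((a_{n,\lambda})_{\lambda})_{\cU}\bigr]
= \bigl(\, (t^{\cM_{\lambda}}[a_{1,\lambda},\dots,a_{n,\lambda}])_{\lambda}\,\bigr)_{\cU}.
\end{equation*}
This is a direct induction on the complexity of $t$, using the definition of the ultraproduct interpretation of function symbols (equation~(\ref{D:ultraproduct extension})). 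For real-valued terms the right-hand side reads $\Ulim_{\lambda} t^{\cM_{\lambda}}[a_{1,\lambda},\dots,a_{n,\lambda}]$, by definition of $\cU$-limit in $\RR$.

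For the atomic case $\varphi$ is $t\le r$, the approximations of $\varphi$ are precisely $t\le r'$ for rational $r'>r$, so $(\prod\cM_{\lambda})_{\cU}\appmod\varphi$ says $\Ulim_{\lambda} t^{\cM_{\lambda}}[\cdots]\le r'$ for every rational $r'>r$. By elementary properties of the $\cU$-limit in $\RR$, this is equivalent to $\{\lambda : t^{\cM_{\lambda}}[\cdots]\le r'\}\in\cU$ for every rational $r'>r$, which is the desired conclusion. The case $t\ge r$ is symmetric. The conjunction step is immediate from the inductive hypothesis applied to each conjunct and the fact that $\cU$ is closed under finite intersections. The disjunction step is where the ultrafilter (and not merely filter) property enters: if $\varphi$ is $\psi_{1}\vee\psi_{2}$ and the ultraproduct fails to approximately satisfy $\varphi$, pick, by Proposition~\ref{P:negation}, an approximation $\psi_{1}'\vee\psi_{2}'$ of $\varphi$ whose weak negation $\wneg\psi_{1}'\wedge\wneg\psi_{2}'$ is approximately satisfied; choose intermediate approximations $\psi_{i}''$ with $\psi_{i}\appby\psi_{i}''\appby\psi_{i}'$, apply the inductive hypothesis to each $\psi_{i}''$ together with the dichotomy forced by maximality of $\cU$, and derive a contradiction on the assumption that the approximation-wise ultra-large condition holds.

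The quantifier steps are the main obstacle, and are where the hypothesis that the family belongs to a uniform class is indispensable. Consider $\varphi$ equal to $\exists_{r} x\,\psi(x,\overline{x})$. If the ultraproduct approximately satisfies $\varphi$, then for every approximation $\exists_{r'} x\,\psi'$ (with $r'>r$ rational and $\psi'\appby\psi$) there is a witness $((b_{\lambda})_{\lambda})_{\cU}$ in the closed ball of radius $r'$; after replacing each $b_{\lambda}$ by the anchor off a $\cU$-large set we may assume $(b_{\lambda})_{\lambda}$ is bounded with $\cU$-limit of norms at most $r'$, and then the inductive hypothesis applied to $\psi'$ at this witness yields, for any further approximation $\psi''$ of $\psi'$, a $\cU$-large set of $\lambda$ with $\cM_{\lambda}\models\psi''[b_{\lambda},\overline{a}_{\lambda}]$ and $d(b_{\lambda},\text{anchor})\le r''$ for any chosen $r''>r'$, which gives ultra-largely many $\cM_{\lambda}\models\exists_{r''}x\,\psi''$; a careful choice of the intermediate rationals $r'<r''$ and intermediate approximations $\psi\appby\psi''\appby\psi'$ ensures every given approximation of $\exists_{r}x\,\psi$ is handled. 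Conversely, given the ultra-large hypothesis for every approximation, one chooses, for each approximation $\exists_{r'}x\,\psi'$ of $\varphi$ and each $\lambda$ in the corresponding $\cU$-large set, a witness $b_{\lambda}$ of norm at most $r'$; uniform boundedness on balls (part of the uniform class hypothesis, hence preserved by Proposition~\ref{prop:ultraproduct}) guarantees that $(b_{\lambda})_{\lambda}\in\ell^{\infty}(M^{(s)}_{\lambda})_{\Lambda}$ and that applying $\psi$-valued terms to it remains in scope, so the ultraproduct admits a witness at radius $r''>r'$. The universal case $\forall_{r}x\,\psi$ is then obtained either by a symmetric direct argument (now using the fact that approximations tighten the radius from $r$ down to $r'<r$, so witnesses to failure in each $\cM_{\lambda}$ remain in the closed ball of radius $r$) or, more cleanly, by applying Proposition~\ref{P:negation} and the weak-negation duality $\wneg(\forall_{r}x\,\psi) = \exists_{r}x\,\wneg\psi$, reducing to the existential case already treated. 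The delicate bookkeeping is always between the two degrees of freedom of approximations (tightening or loosening the rational estimate, and inductively approximating the subformula) and arranging intermediate choices so that the inductive hypothesis applies to strictly weaker approximations on both sides of the biconditional.
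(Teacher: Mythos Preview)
Your proposal is correct and follows exactly the approach the paper indicates: the paper's proof is a one-line remark that the result ``follows by induction on the complexity of~$\varphi$'' using Proposition~\ref{prop:ultraproduct} and Definition~\ref{D:ultraproduct}, and you have supplied the standard details of that induction (term evaluation via $\cU$-limits, the atomic case via properties of $\Ulim$, conjunction via closure under intersections, disjunction via the ultrafilter dichotomy, and the quantifier steps via intermediate approximations and the boundedness guaranteed by uniformity). One minor remark: in the converse of the existential step, the witnesses $b_{\lambda}$ lie in $B[r']$ by construction, so their boundedness is automatic; the uniform-class hypothesis is needed rather to ensure that terms inside $\psi$ evaluate to bounded families so that the induction hypothesis applies.
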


\begin{proof}
Using the definition of $\cU$-ultraproduct of structures and the interpretation of function symbols therein (Proposition~\ref{prop:ultraproduct} and Definition~\ref{D:ultraproduct}), \Los' Theorem follows by induction on the complexity of~$\varphi$. 
\end{proof}

An important corollary of \Los' theorem is the special case when all the structures $\mathcal M_\lambda$ equal the same structure $\mathcal M$. 
In this case, the $\mathcal{U}$-ultraproduct $( \prod_{\lambda\in\Lambda} \mathcal{M}_\lambda)_{\mathcal U}$ is the $\mathcal{U}$-ultrapower of $\mathcal{M}$. 
Hence we have the following:

\begin{corollary}
	Every metric structure is an elementary substructure of its ultrapowers.
\end{corollary}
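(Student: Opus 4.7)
The plan is to apply Łoś' Theorem (Theorem~\ref{T:Los}) to the constant family $(\mathcal{M}_\lambda)_{\lambda \in \Lambda}$ where $\mathcal{M}_\lambda = \mathcal{M}$ for every $\lambda$. First I would note that a single structure trivially forms a uniform class, since its own moduli of local boundedness and local uniform continuity serve as a common modulus of uniformity (see Remarks~\ref{rem:ThM-uniform}); hence the ultrapower $(\mathcal{M})_{\mathcal{U}}$ is well defined as an $L$-structure. Next I would exhibit the canonical diagonal map $\iota : \mathcal{M} \to (\mathcal{M})_{\mathcal{U}}$ sending $a$ to the class of the constant family $(a)_{\lambda \in \Lambda}$, and verify that $\iota$ is a substructure embedding: on each sort it is an isometry (being a distance-preserving injection into the ultrapower metric), and for any function symbol $f$ of $L$ the identity $\iota(f^{\mathcal{M}}(a_1,\dots,a_n)) = f^{(\mathcal{M})_{\mathcal{U}}}(\iota(a_1),\dots,\iota(a_n))$ follows directly from the definition of the ultraproduct interpretation on constant representatives.

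The heart of the proof is then the elementary equivalence statement: for every $L$-formula $\varphi(x_1,\dots,x_n)$ and elements $a_1,\dots,a_n$ of $\mathcal{M}$ of the appropriate sorts,
\[
    \mathcal{M} \appmod \varphi[a_1,\dots,a_n] \quad\Longleftrightarrow\quad (\mathcal{M})_{\mathcal{U}} \appmod \varphi[\iota(a_1),\dots,\iota(a_n)].
\]
By Łoś' Theorem, the right-hand side is equivalent to the assertion that for every approximation $\varphi' \appby \varphi$ the set $\{\lambda \in \Lambda : \mathcal{M} \models \varphi'[a_1,\dots,a_n]\}$ is $\mathcal{U}$-large. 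Since the family is constant in $\lambda$, this set is either $\Lambda$ or $\emptyset$, and membership in $\mathcal{U}$ reduces to the condition $\mathcal{M} \models \varphi'[a_1,\dots,a_n]$. Thus the right-hand side is equivalent to $\mathcal{M} \models \varphi'[a_1,\dots,a_n]$ for every $\varphi' \appby \varphi$, which is precisely the definition of $\mathcal{M} \appmod \varphi[a_1,\dots,a_n]$.

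There is no real obstacle here beyond keeping the bookkeeping straight: one must take care that the notion of elementary substructure employs approximate satisfaction $\appmod$ on both sides, and that Łoś' Theorem expresses $\appmod$ in the ultraproduct in terms of \emph{discrete} satisfaction $\models$ of all strict approximations in the factors. The collapse of the ``$\mathcal{U}$-large'' quantifier to a single truth value on the constant family is what makes the two sides line up exactly, completing the verification that $\iota$ realizes $\mathcal{M}$ as an elementary substructure of $(\mathcal{M})_{\mathcal{U}}$.
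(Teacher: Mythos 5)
Your proof is correct and follows the same route as the paper's (which simply asserts the corollary as immediate from Łoś' Theorem applied to the constant family): you apply Łoś' Theorem, observe that for a constant family the $\mathcal{U}$-large set $\{\lambda : \mathcal{M} \models \varphi'[a_1,\dots,a_n]\}$ degenerates to $\Lambda$ or $\emptyset$, and conclude that the right-hand side of Łoś collapses to the definition of approximate satisfaction in $\mathcal{M}$. One small slip: you write ``$\varphi' \appby \varphi$'' for ``$\varphi'$ is an approximation of $\varphi$,'' but in the paper's convention that relation is denoted $\varphi \appby \varphi'$ (or $\varphi' \appto \varphi$); the mathematics is unaffected since your prose makes the intended direction clear.
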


\subsection{Compactness} 
\label{sec:compactness}
The Compactness Theorem is arguably the most distinctive theorem of first-order logic.

For a set $\varphi$ of formulas, let $\varphi_\approx$ denote the set of all approximations of formulas in~$\varphi$.

\begin{theorem}[Compactness Theorem]
	\label{T:compactness}
Let $\sC$ be a uniform class of structures and let $T$ be an $L$-theory. 
If every finite subset of $T_\approx$ has a model in~$\sC$ in the semantics of discrete satisfaction, then $T$ has a model in the semantics of approximate satisfaction. 
This model may be taken to be an ultraproduct of structures in $\sC$ that admits the same modulus of uniformity as the class~$\sC$.
\end{theorem}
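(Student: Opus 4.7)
The plan is to construct the desired model of~$T$ as an ultraproduct of structures provided by the hypothesis, then invoke \Los' Theorem~\ref{T:Los} to lift discrete satisfaction of approximations to approximate satisfaction of the full theory. First I would take the set $\Lambda = \Pfin(T_\approx)$ of finite subsets of~$T_\approx$ as the index set, and for each $\lambda\in\Lambda$ use the hypothesis to choose some $\cM_\lambda\in\sC$ that discretely satisfies every sentence in~$\lambda$.

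Next, for each $\psi\in T_\approx$ I would set $A_\psi = \{\lambda\in\Lambda : \psi\in\lambda\}$. Given any finite list $\psi_1,\dots,\psi_n\in T_\approx$, the element $\{\psi_1,\dots,\psi_n\}\in\Lambda$ lies in $A_{\psi_1}\cap\dots\cap A_{\psi_n}$, so the family $\{A_\psi : \psi\in T_\approx\}$ has the finite intersection property. I would extend it to an ultrafilter $\cU$ on~$\Lambda$ and form the ultraproduct $\cM = \bigl(\prod_{\lambda\in\Lambda}\cM_\lambda\bigr)_{\cU}$. Proposition~\ref{prop:ultraproduct} guarantees that $\cM$ is a well-defined $L$-structure and that any modulus of uniformity for the class~$\sC$ is inherited by~$\cM$, which takes care of the last clause of the statement.

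To verify $\cM\appmod T$, I would fix $\varphi\in T$ and any approximation $\varphi'$ of~$\varphi$. Since $\varphi'\in T_\approx$, we have $A_{\varphi'}\in\cU$, and the construction ensures $\cM_\lambda\models\varphi'$ for every $\lambda\in A_{\varphi'}$; in particular the set $\{\lambda : \cM_\lambda \models \varphi'\}$ is $\cU$-large. \Los' Theorem~\ref{T:Los} then yields $\cM\appmod\varphi$, and as $\varphi\in T$ was arbitrary we conclude $\cM\appmod T$.

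The principal subtlety---and arguably the only substantive point---is the asymmetry between the hypothesis, which only provides \emph{discrete} models of \emph{finite} subsets of~$T_\approx$, and the desired conclusion of \emph{approximate} satisfaction of the entire theory~$T$. This gap is precisely what \Los' Theorem is designed to bridge, since approximate satisfaction of~$\varphi$ at an ultraproduct is equivalent to discrete satisfaction of each approximation of~$\varphi$ on a $\cU$-large set of indices, which is exactly what the indexing of $\Lambda$ by finite subsets of $T_\approx$ delivers.
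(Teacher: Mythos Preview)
Your proof is correct and follows essentially the same approach as the paper: index by finite subsets of $T_\approx$, extend the finite-intersection family to an ultrafilter, and apply \Los' Theorem together with Proposition~\ref{prop:ultraproduct}. The only cosmetic difference is that the paper generates the filter from the sets $\Lambda_{\supseteq\Phi}=\{\lambda:\lambda\supseteq\Phi\}$ for finite $\Phi\subseteq T_\approx$, whereas you use the singleton version $A_\psi=\Lambda_{\supseteq\{\psi\}}$; these generate the same filter.
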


\begin{proof}
	Let $\Lambda$ be the set of finite subsets of $T_\approx$, and for each $\lambda$ in $\Lambda$, let $\mathcal{M}_\lambda$ be a model in $\sC$ of all the sentences in~$\lambda$ in the semantics of discrete satisfaction. 
For every finite subset $\varphi$ of $T_\approx$, let $\Lambda_{\supseteq\varphi}$ be the set of all $\lambda\in\Lambda$ such that $\lambda\supseteq \varphi$. 
Then $\mathcal{M}_\lambda\models\varphi$ for every $\lambda\in\Lambda_{\supseteq\varphi}$. 
Note that the collection of subsets of $\Lambda$ of the form $\Lambda_{\supseteq\varphi}$ is closed under finite intersections since $\Lambda_{\supseteq\varphi}\cap\Lambda_{\supseteq\Psi} = \Lambda_{\supseteq\varphi\cup\Psi}$. 
Let $\mathcal{U}$ be an ultrafilter on $\Lambda$ extending this collection. 
Then, by \Los' Theorem (Theorem~\ref{T:Los}), we have $(\prod_{\lambda\in\Lambda} \mathcal{M}_\lambda)_{\mathcal U} \appmod T$. 
Furthermore, $(\prod_{\lambda\in\Lambda} \mathcal{M}_\lambda)_{\mathcal U}$ admits the same modulus of uniformity as the family $(\mathcal{M}_\lambda)_{\lambda\in\Lambda}$, by Proposition~\ref{prop:ultraproduct}.
\end{proof}

The following corollary amounts to a restatement of the Compactness Theorem that does not explicitly mention approximation of formulas.

\begin{corollary}\label{cor:compactness}
If $\Upsilon$ is a uniform theory and $T$ is any collection of sentences such that every finite subset of~$T$ is approximately satisfied by a model of~$\Upsilon$, then $\Upsilon\cup T$ has a model.
\end{corollary}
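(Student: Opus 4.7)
The plan is to reduce Corollary~\ref{cor:compactness} directly to the Compactness Theorem~\ref{T:compactness} by manufacturing an appropriate uniform class $\sC$ of $L$-structures in which to seek discrete-satisfaction models of finite subsets of $(\Upsilon\cup T)_\approx$. First I would pick a modulus of uniformity $\bU$ common to all models of the uniform theory~$\Upsilon$ (which exists by the definition of uniform theory), and let $\sC$ be the class of all $L$-structures admitting $\bU$ as a modulus of uniformity. By Proposition~\ref{prop:uniformity-axiomatizable}, this class is axiomatizable, hence in particular uniform. Crucially, every approximate model of $\Upsilon$ belongs to $\sC$.

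Next, I would verify the hypothesis of Theorem~\ref{T:compactness} for the theory $\Upsilon\cup T$ and the class~$\sC$: namely, that every finite subset $F$ of $(\Upsilon\cup T)_\approx$ has a discrete-satisfaction model in~$\sC$. Write $F = F_\Upsilon \cup F_T$ where $F_\Upsilon \subseteq \Upsilon_\approx$ consists of approximations of finitely many sentences $\Upsilon_0\subseteq\Upsilon$, and $F_T\subseteq T_\approx$ consists of approximations of finitely many sentences $T_0\subseteq T$. By the hypothesis of the corollary applied to the finite set $T_0$, there is a structure $\cM\appmod\Upsilon$ such that $\cM\appmod T_0$. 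Since $\cM$ approximately satisfies each sentence of $\Upsilon_0\cup T_0$, it \emph{discretely} satisfies every approximation of every such sentence, and in particular every sentence of $F$. Moreover $\cM\in\sC$ because $\cM\appmod\Upsilon$.

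Finally I would invoke the Compactness Theorem~\ref{T:compactness}, which yields an ultraproduct of structures in $\sC$ approximately satisfying $\Upsilon\cup T$; this is the desired model. There is essentially no obstacle here: the content of the corollary lies entirely in repackaging the hypothesis ``every finite subset of $T$ is approximately satisfied by a model of $\Upsilon$'' into the form ``every finite subset of $(\Upsilon\cup T)_\approx$ is discretely satisfied by a structure in a uniform class,'' and the only mildly subtle point is to choose $\sC$ so that the uniformity needed to apply Theorem~\ref{T:compactness} is automatically available, which Proposition~\ref{prop:uniformity-axiomatizable} handles.
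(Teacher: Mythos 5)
Your proof is correct and takes essentially the same route as the paper: reduce directly to the Compactness Theorem~\ref{T:compactness} by observing that any finite subset of $(\Upsilon\cup T)_\approx$ is drawn from approximations of $\Upsilon$ together with finitely many sentences of~$T$, so (by hypothesis) it is discretely satisfied by some approximate model of~$\Upsilon$, and that model lies in a uniform class. One small slip worth flagging: you justify the uniformity of your $\sC$ (all $L$-structures admitting $\bU$ as a modulus) via ``axiomatizable, hence in particular uniform,'' but that implication is false in general, and Proposition~\ref{prop:uniformity-axiomatizable} runs the other way (a fixed-modulus class is axiomatizable, not vice versa); the class $\sC$ is uniform for the direct reason that $\bU$ is a common modulus for all its members, and in fact the even simpler choice $\sC=\Mod(\Upsilon)$ — uniform by the very definition of ``uniform theory'' — already suffices, which is what the paper implicitly uses.
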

\begin{proof}
Every finite subset $\varphi$ of $(\Upsilon\cup T)_{\approx}$ must be a subset of $(\Upsilon\cup T')_{\approx}$ for some finite subset $T'\subseteq T$. 
By hypothesis, $\Upsilon\cup T'$ has a model, which is thus also a model of $\varphi$ in the semantics of discrete satisfaction. 
By the Compactness Theorem (Theorem~\ref{T:compactness}), $\Upsilon\cup T$ has a model.
\end{proof}

\begin{remarks}
	\label{R:topology}
Fix a signature~$L$ and a class $\sC$ of $L$-structures.
For any set $\Phi$ of $L$-formulas, let $\Mod_{\sC}(\Phi) = \sC\cap\Mod(\Phi)$ be the subclass of $\sC$ consisting of models of~$\Phi$. 
We topologize $\sC$ by taking as a basis of closed sets the collection of all classes of the form $\Mod_{\sC}(\varphi)$ as $\varphi$ varies over~$L$-formulas; thus, closed sets are of the form $\Mod_{\sC}(\Phi)$.
We refer to this as the \emph{logic topology on structures.}

Corollary~\ref{cor:compactness} is equivalent to the following statement:
If $\sC = \Mod(\Upsilon)$ is the class of models of a uniform $L$-theory~$\Upsilon$, and $T$ is any collection of $L$-sentences such that the family
\[
\left(\ \Mod_{\sC}(\Phi) \mid \text{$\Phi$ is a finite subset of $T$}\ \right) 
\]
has the finite intersection property, then 
\[
\bigcap_{\overset{\Phi\subset T}{\text{$\Phi$ finite}}}\Mod_{\sC}(\varphi)\neq\emptyset.
\]
Thus, an axiomatizable uniform class $\sC$ is compact in the logic topology. 
Moreover, if $(\mathcal{M}_\lambda)_{\lambda\in\Lambda}$ is any family in $\sC$ and $\mathcal{U}$ is an ultrafilter on $\Lambda$, then a $\mathcal{U}$-limit of $(\mathcal{M}_\lambda)_{\lambda\in\Lambda}$ in this topology is given by the ultraproduct $(\prod_{\lambda\in\Lambda} \mathcal{M}_\lambda)_{\mathcal U}$.
This explains why the Compactness Theorem is so named, as well as the naturality of the ultraproduct construction. 
More generally, if $\sC$ is any uniform class endowed with the above topology, then $\sC$ is a relatively compact (dense) subset of the compact class $\overline{\sC} = \Mod(\thry(\sC))$.
We emphasize that uniformity is a necessary condition for precompactness. 

To conclude these remarks, we note that the logic topology on any uniform class~$\sC$ of $L$-structures is regular (although usually not Hausdorff); 
this follows from the fact that $\sC$ is a subspace of the compact (hence regular) space $\overline{\sC} = \Mod(\thry(\sC))$. 
\end{remarks}

Now we present three useful applications of the Compactness Theorem. 
The first one (Corollary~\ref{C:complete theories}) gives a finitary condition for a theory to be of the form $\thry(\mathcal{M})$ for some structure $\mathcal{M}$. 
The second one (Corollary~\ref{C:elementary equivalence}), states that any two models of a complete theory $T$ can be jointly elementarily embedded in a single model of $T$. 

Recall (Definition~\ref{D:model}) that the complete theory of a structure $\mathcal{M}$ is denoted $\thry(\mathcal{M})$.

\begin{corollary}
	\label{C:complete theories}
	The following conditions are equivalent for a theory $T$:
	\begin{enumerate}[\normalfont(1)]
		\item 
		\label{I:complete theory}
The theory~$T$ is complete, i.e., there exists a structure $\mathcal{M}$ such that $T=\thry(\mathcal M)$.
		\item
		\label{I:complete theory conditions}
		\begin{enumerate}[\normalfont(a)]
			\item There exists a uniform theory $\Upsilon$ such that every finite subset of $\Upsilon\cup T$ is satisfiable, and
			\item
			For every $L$-sentence $\varphi$, if $\varphi\notin T$, then there exists $\varphi'\appto\varphi$ such that $\wneg\varphi'\in T$.
		\end{enumerate} 
	\end{enumerate}
\end{corollary}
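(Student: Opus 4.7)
The plan is to prove the two implications separately, relying principally on Proposition~\ref{P:negation} (which characterizes $\cM\not\appmod\varphi$ via approximate satisfaction of $\wneg\varphi'$ for some $\varphi'\appto\varphi$) and Corollary~\ref{cor:compactness} (the Compactness Theorem in its ``uniform theory plus auxiliary axioms'' form).

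For $(1)\Rightarrow(2)$, pick $\cM$ with $T=\thry(\cM)$ and take $\Upsilon = T$ itself. This $\Upsilon$ is uniform by Remarks~\ref{rem:ThM-uniform}(1), and every finite subset of $\Upsilon\cup T = T$ is satisfied by $\cM$, so~(a) holds. For~(b), if $\varphi\notin T$ then $\cM\not\appmod\varphi$, and Proposition~\ref{P:negation} supplies some $\varphi'\appto\varphi$ with $\cM\appmod\wneg\varphi'$, i.e. $\wneg\varphi'\in\thry(\cM)=T$.

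For $(2)\Rightarrow(1)$, I first produce a candidate model via compactness. Any finite $T_0\subseteq T$ is a subset of $\Upsilon\cup T$, so by~(a) it is satisfied by some structure, which in particular is a model of $\Upsilon$ approximately satisfying $T_0$; this is precisely the hypothesis of Corollary~\ref{cor:compactness}, yielding a model $\cM\appmod\Upsilon\cup T$. Thus $T\subseteq\thry(\cM)$. The containment $\thry(\cM)\subseteq T$ is the place where~(b) enters: were there some $\varphi\in\thry(\cM)\setminus T$, condition~(b) would yield $\varphi'\appto\varphi$ with $\wneg\varphi'\in T\subseteq\thry(\cM)$, so that $\cM\appmod\wneg\varphi'$. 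On the other hand $\cM\appmod\varphi$, and the contrapositive form of Proposition~\ref{P:negation} states that $\cM\appmod\varphi$ forces $\cM\not\appmod\wneg\varphi'$ for \emph{every} approximation $\varphi'\appto\varphi$, yielding the desired contradiction.

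The only delicate point, and the step deserving the most care, is the last one: it is tempting (but wrong) to try to derive the contradiction by producing an interpolating approximation $\varphi''$ with $\varphi\appby\varphi''\appby\varphi'$ and noting that $\cM\models\varphi''$ while $\cM\models\wneg\varphi''$, since for atomic formulas $\varphi''$ and $\wneg\varphi''$ can be simultaneously discretely satisfied (e.g.\ $t\le r'$ and $t\ge r'$ when $t^{\cM}=r'$). Invoking Proposition~\ref{P:negation} in its contrapositive form sidesteps this subtlety cleanly, since it operates at the level of approximate, not discrete, satisfaction.
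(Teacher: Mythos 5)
Your proof is correct and follows the same route as the paper's: the $(1)\Rightarrow(2)$ direction is the trivial one (which the paper silently omits), and for $(2)\Rightarrow(1)$ you apply compactness via Corollary~\ref{cor:compactness} to get a model $\cM$ with $T\subseteq\thry(\cM)$, then use condition~(b) together with Proposition~\ref{P:negation} to rule out any $\varphi\in\thry(\cM)\setminus T$, exactly as the paper's terse appeal to ``Part~(b) gives $\thry(\cM)\subseteq T$'' intends. Your remark on why a single interpolating approximation does not by itself yield a discrete contradiction is accurate (Proposition~\ref{P:negation}'s proof in fact threads two interpolants for precisely this reason), though it is a side observation rather than a necessary part of the argument once you invoke that proposition as a black box.
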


\begin{proof}
	We only have to prove  (\ref{I:complete theory conditions})$\Rightarrow$(\ref{I:complete theory}). 
By part~(a) of~(\ref{I:complete theory conditions}) and Corollary~\ref{cor:compactness}, $T$ has a model $\mathcal{M}$ whose theory $\thry(\mathcal M)$ extends~$T$. 
Part~(b) of~(\ref{I:complete theory conditions}) gives $\thry(\cM)\subseteq T$, proving~(\ref{I:complete theory}).
\end{proof}

\begin{notation}
	If $\mathcal{M}$ is an $L$-structure and $A=(a_i)_{i\in I}$ is an indexed family of elements of $\mathcal{M}$, we denote by $(\mathcal{M},a_i \mid i\in I)$ the expansion of $\mathcal{M}$  that results from adding a distinct constant for $a_i$, for each $i\in I$. 
In particular, if $A$ is a  set of elements of $\mathcal{M}$, we denote by 
	\[
	(\mathcal{M},a \mid a\in A)
	\]
	 the expansion of $\mathcal{M}$  that results from adding a constant for each $a\in A$. 
If, in this case, $C$ is a set of constant symbols not already in $L$ that includes one constant symbol designating each constant $a\in A$, we informally refer to the signature $L[C]$ (see Definition~\ref{D:signatures})  as~$L[A]$. 
Thus, we informally call the preceding expansion of $\mathcal{M}$ an ``$L[A]$-structure''. 
If $A$ consists of all the elements of $\mathcal{M}$, we write $(\mathcal{M}, a \mid a\in A)$ as $(\mathcal{M}, a\mid a\in \mathcal{M})$ and denote the expanded signature by~$L[\cM]$.
\end{notation}

\begin{definition}
	Let $\mathcal{M}$ and $\mathcal{N}$ be $L$-structures. 
An \emph{elementary embedding}  of $\mathcal M$ into $\mathcal N$ is a map $e$ that assigns to each element $a$ of $\mathcal M$ an element $e(a)$ of $\mathcal N$ such that, whenever $a_1,\dots, a_n$ are elements of $\mathcal{M}$ and $\varphi(x_1,\dots, x_n)$ is an $L$-formula such that $a_i$ is of the same sort as $x_i$, for $i=1,\dots, n$, we have
	\[
	\mathcal{M}\appmod\varphi[a_1\dots, a_n] \quad\Leftrightarrow\quad\mathcal{N}\appmod\varphi[e(a_1)\dots, e(a_n)].
	\]
\end{definition}

Note that $e$ is an elementary embedding of $\mathcal{M}$ into $\mathcal{N}$ if and only if the $L[A]$-structures $(\mathcal{M},a \mid a\in A)$ and $(\mathcal{N}, e(a) \mid a\in A)$ are elementarily equivalent.

\begin{corollary}
	\label{C:elementary equivalence}
	The following conditions are equivalent for two $L$-structures $\cM,\cM'$. 
	\begin{enumerate}[\normalfont(1)]
		\item
		\label{I: elementary equivalence}
		$\cM\equiv\cM'$.
		\item
		\label{I:elementary embedding}
		There exists a structure $\cN$ such that $\cM\prec\cN$ and there is an elementary embedding of $\cM'$ into $\cN$. 
Moreover, $\cN$ can be taken to be an ultrapower of $\cM$.
	\end{enumerate}
\end{corollary}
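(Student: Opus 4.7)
The implication (\ref{I:elementary embedding})$\Rightarrow$(\ref{I: elementary equivalence}) is immediate: any elementary embedding preserves approximate satisfaction of $L$-sentences, so $\cM'\equiv\cN$, and $\cM\prec\cN$ gives $\cM\equiv\cN$, hence $\cM\equiv\cM'$ by transitivity.

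For (\ref{I: elementary equivalence})$\Rightarrow$(\ref{I:elementary embedding}), the plan is the standard diagram argument, adapted to the approximate-satisfaction setting. Extend $L$ to $L[\cM']$ by adjoining a new constant symbol $\mathtt{c}_a$ for each element $a$ of $\cM'$ (one per sort). Let $T$ be the complete $L[\cM']$-theory of the expansion $(\cM', a)_{a\in \cM'}$, and let $\Upsilon = \thry(\cM)$ viewed as an $L[\cM']$-theory (the new constants do not occur in $\Upsilon$). By Remarks~\ref{rem:ThM-uniform}, $\Upsilon$ is uniform. I will show that every finite subset of $T$ is approximately satisfied by some $L[\cM']$-expansion of $\cM$; by Corollary~\ref{cor:compactness}, this will produce an $L[\cM']$-model $\cN^{+}$ of $\Upsilon\cup T$, and its reduct $\cN$ to $L$ will satisfy $\cM\equiv\cN$ (hence, by Proposition~\ref{P:elementary}, $\cM\prec\cN$ can be arranged), while $a\mapsto \mathtt{c}_a^{\cN^{+}}$ is the desired elementary embedding of $\cM'$ into $\cN$.

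A typical finite subset of $T$ involves only finitely many constants $\mathtt{c}_{a_1},\dots,\mathtt{c}_{a_n}$ and has the form $\{\varphi_1(\mathtt{c}_{a_1},\dots,\mathtt{c}_{a_n}),\dots,\varphi_k(\mathtt{c}_{a_1},\dots,\mathtt{c}_{a_n})\}$ with $\cM'\appmod \varphi_i[a_1,\dots,a_n]$ for each $i$. Choosing $r$ large enough that every $a_j$ lies in the closed ball of radius $r$ around the anchor of its sort, we have $\cM'\appmod (\exists_r\bar{x})\bigwedge_{i}\varphi_i(\bar{x})$. Since $\cM\equiv\cM'$, the same holds in $\cM$, so one can choose $b_1,\dots,b_n$ in $\cM$ such that interpreting each $\mathtt{c}_{a_j}$ as $b_j$ yields an $L[\cM']$-expansion of $\cM$ that approximately satisfies the chosen finite subset of $T$. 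This establishes finite approximate satisfiability of $\Upsilon\cup T$ relative to the uniform theory $\Upsilon$, so Corollary~\ref{cor:compactness} applies.

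To guarantee the ``moreover'' clause that $\cN$ may be taken to be an ultrapower of $\cM$, I will inspect the proof of the Compactness Theorem~\ref{T:compactness}: the model produced there is the ultraproduct of the finitely-many-axioms-satisfying structures indexed by finite subsets of $(\Upsilon\cup T)_{\approx}$. In our construction each such structure is an $L[\cM']$-expansion of $\cM$ itself, so the resulting model is an $L[\cM']$-expansion of an ultrapower $\cN$ of $\cM$. By \Los' Theorem (or the corollary stated after it in Section~\ref{sec:Los}), $\cM\prec\cN$ via the diagonal embedding; and by construction the interpretations of the constants $\mathtt{c}_a$ yield an elementary embedding $e:\cM'\to\cN$. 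The main (but routine) obstacle is the bookkeeping needed to locate all elements $a_1,\dots,a_n$ simultaneously in a single bounded ball so that a bounded existential quantifier $\exists_r$ may be applied uniformly; this is handled by taking $r$ to be the maximum norm of the finitely many relevant anchors, one per sort.
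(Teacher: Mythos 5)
Your proof is essentially correct and follows the same compactness-via-expansion-by-constants strategy as the paper, but with one difference in bookkeeping and one misattribution worth flagging. The paper adds constants naming \emph{both} the elements of $\cM$ and those of $\cM'$, taking $T$ to be the complete $L[\cM]$-theory of $\cM$ and $T'$ the complete $L[\cM']$-theory of $\cM'$; a model of $T\cup T'$ then directly yields two elementary embeddings, so $\cM\prec\cN$ (up to identifying $\cM$ with its image) falls out with no extra work. You instead add constants only for $\cM'$, pairing the $L[\cM']$-diagram of $\cM'$ with $\Upsilon=\thry(\cM)$, which names no elements of $\cM$. That is a legitimate alternative, but it leaves a small hole in your middle paragraph: from $\cN\appmod\Upsilon$ you can only conclude $\cM\equiv\cN$, and the parenthetical appeal to Proposition~\ref{P:elementary} does not convert $\equiv$ into $\prec$ — that proposition gives a criterion (Tarski--Vaught) for an already-given substructure to be elementary, not a way to ``arrange'' $\cM\prec\cN$ from elementary equivalence alone. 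Your final paragraph does in fact repair this, since inspecting the proof of Theorem~\ref{T:compactness} shows the model is an $L[\cM']$-expansion of an ultraproduct of copies of $\cM$, i.e.\ an ultrapower of $\cM$, into which $\cM$ embeds elementarily by the diagonal map and {\L}o\'s' Theorem. So the ``moreover'' clause is doing load-bearing work in your proof rather than being an optional refinement; the argument is sound once that is understood, but it would read more cleanly either to adopt the paper's two-sided diagram (which decouples the main claim from the ultrapower observation) or to state explicitly up front that $\cM\prec\cN$ is obtained via the ultrapower construction rather than via Proposition~\ref{P:elementary}.
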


\begin{proof}
It suffices to prove the direct implication, since the inverse is clear.
Assume $\cM\equiv\cM'$. 
Let $T$ be the complete $L[\cM]$-theory of $\cM$ and $T'$ the complete $L[\cM']$-theory of~$\cM'$. 
Both $T$ and $T'$ are uniform theories, by Remark~\ref{rem:ThM-uniform}.
First we show that every finite subset of $T'_{\approx}\cup T$ has a model. 
Since $T'$ is closed under conjunctions, it suffices to show that $T\cup\{\psi\}$ has a model whenever $\psi\in T'_{\approx}$. 
Any given formula in $T'_\approx$ is of the form $\varphi'(c_{b_1},\dots,c_{b_n})$, where $\varphi'$ is an approximation of a formula $\varphi$ in $T'$ and $c_{b_i}$ is a constant for an element $b_i$ of~$\cM'$, for $i=1,\dots, n$.
Since $\cM'\appmod\varphi[b_1,\dots,b_n]$ by assumption, then there exists $r>0$ such that $\cM'\appmod\exists_r x_1\dots \exists_r x_n\,\varphi(x_1,\dots,x_n)$.
Now, since $\cM\equiv\cM'$ and $\varphi'\appto\varphi$, the semantics of approximate satisfaction ensure the existence of elements $a_1,\dots, a_n$ of $\cM$ such that $\cM\appmod\varphi'[a_1,\dots,a_n]$; 
hence, $(\cM,a \mid a\in\cM)$ admits an expansion to a model $\widetilde{\cM}$ of $T\cup\{\psi\}$ simply by letting $b_1^{\widetilde{\cM}} \coloneqq a_1$, \dots, $b_n^{\widetilde{\cM}} \coloneqq a_n$.
By the Compactness Theorem~\ref{T:compactness}, $T\cup T'$ has a model. 
Let
	\[
	(\,\mathcal N, \widetilde{a}, \widetilde{b}\mid {a\in\cM,b\in\cM'})
	\]
	be a model of this theory. 
The maps $a\mapsto \widetilde{a}$ and $b\mapsto \widetilde{b}$ are elementary embeddings of $\cM$ and $\cM'$, respectively, into $\cN$. 
Without loss of generality, we may assume both that $\cN$ is an ultrapower of~$\cM$, and $\widetilde{a}=a$ for all elements $a$ of~$\cM$. 
This proves (\ref{I:elementary embedding}).
\end{proof}

Essentially the same argument used to prove Corollary~\ref{C:elementary equivalence} proves the following result:

\begin{corollary}
	\label{C:families of equivalent structures}
	Given any family $(\mathcal{M}_i)_{i\in I}$ of elementarily equivalent $L$-structures there exists an $L$-structure $\mathcal N$ such that, for each $i\in I$, there is an elementary embedding of $\mathcal{M}_i$ into $\mathcal N$. 
\end{corollary}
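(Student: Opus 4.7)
My plan is to mimic the argument of Corollary~\ref{C:elementary equivalence}, replacing ``two structures'' by an arbitrary family via a compactness argument on the union of expanded theories.

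First, I would fix pairwise disjoint collections of fresh constant symbols, one collection for each $i\in I$, so as to expand $L$ to $L_i = L[\cM_i]$ by adjoining a distinct constant symbol $c^i_a$ for every element $a$ of $\cM_i$; the expansions $L_i$ share only the symbols of the original signature~$L$. Let $L^* = \bigcup_{i\in I}L_i$. For each $i\in I$, let $T_i$ be the complete $L_i$-theory of the natural expansion $(\cM_i, a \mid a\in\cM_i)$, and set $T^* = \bigcup_{i\in I}T_i$. Each $T_i$ is uniform (Remark~\ref{rem:ThM-uniform}), and since the $L_i$'s agree on $L$ while their added constants are disjoint, the union $T^*$ admits a common modulus of uniformity (arising from any single $T_i$, since moduli pertain only to the interpretation of symbols in $L$). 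A model of $T^*$ will be an $L^*$-structure $\cN$ in which each map $a\mapsto (c^i_a)^{\cN}$ ($a\in\cM_i$) is an elementary embedding of $\cM_i$ into the $L$-reduct of~$\cN$, which is exactly what is required.

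Next, I would verify that every finite subset of~$T^*_\approx$ is satisfiable. Any finite $F\subset T^*_\approx$ mentions constants from only finitely many indices $i_1,\dots,i_k\in I$. The main step is to exhibit a single $L$-structure $\cN_0$ admitting simultaneous elementary embeddings of $\cM_{i_1},\dots,\cM_{i_k}$, for then $\cN_0$ expanded by interpreting each $c^{i_j}_a$ via the corresponding embedding is a model of $T_{i_1}\cup\cdots\cup T_{i_k}$, hence of $F$. I would construct $\cN_0$ by induction on $k$ using Corollary~\ref{C:elementary equivalence}: set $\cN_0^{(1)} = \cM_{i_1}$; having built $\cN_0^{(j)}$ with $\cM_{i_\ell}$ elementarily embedded into $\cN_0^{(j)}$ for $\ell\le j$, note that $\cN_0^{(j)} \equiv \cM_{i_1} \equiv \cM_{i_{j+1}}$ (all $\cM_i$ are elementarily equivalent, and elementary equivalence is preserved under elementary extension), so Corollary~\ref{C:elementary equivalence} yields $\cN_0^{(j+1)}$ with $\cN_0^{(j)}\prec\cN_0^{(j+1)}$ and an elementary embedding of $\cM_{i_{j+1}}$ into $\cN_0^{(j+1)}$; compose with the previous elementary embeddings to embed $\cM_{i_1},\dots,\cM_{i_{j+1}}$ into $\cN_0^{(j+1)}$. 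After $k$ steps, set $\cN_0 = \cN_0^{(k)}$.

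Finally, I would apply the Compactness Theorem~\ref{T:compactness} (equivalently, Corollary~\ref{cor:compactness}) in the uniform class determined by the common modulus mentioned above. Since $T^*$ is finitely satisfiable (modulo approximation), it has a model~$\cN$, and its $L$-reduct together with the maps $a\mapsto (c^i_a)^{\cN}$ provides the desired simultaneous elementary embeddings of every $\cM_i$ into a common $L$-structure. The one subtlety that must be handled carefully is the inductive step: one must ensure that the prior elementary embeddings of $\cM_{i_1},\dots,\cM_{i_j}$ into $\cN_0^{(j)}$ remain elementary when composed with the inclusion $\cN_0^{(j)}\prec\cN_0^{(j+1)}$, which is immediate from the definition of elementary substructure; this is the only place where the argument for a family genuinely goes beyond the two-structure case of Corollary~\ref{C:elementary equivalence}.
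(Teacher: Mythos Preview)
Your proposal is correct and follows essentially the same approach the paper intends: apply compactness to the union $T^* = \bigcup_{i\in I} T_i$ of the elementary diagrams, after checking finite satisfiability. The only minor difference is in how you verify finite satisfiability: you iterate Corollary~\ref{C:elementary equivalence} as a black box to build a chain $\cN_0^{(1)}\prec\cdots\prec\cN_0^{(k)}$, whereas the direct generalization of that corollary's proof would simply pick one $\cM_{i_1}$ and find witnesses there for all the finitely many approximated formulas (using $\cM_{i_1}\equiv\cM_{i_j}$ for each $j$), avoiding the induction entirely.
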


\begin{proposition}
\label{P:isomorphic elementary extensions}
If $\mathcal{M},\mathcal{N}$ are elementarily equivalent structures, then there exist structures $\Hat{\mathcal{M}},\Hat{\mathcal{N}}$ such that $\cM\prec\hcM$, $\cN\prec\hcN$, and $\hcM$ is isomorphic to $\hcN$.

Furthermore, if $(a_i)_{i\in I}$, $(b_i)_{i\in I}$ are elements of $\mathcal{M}$ and $\mathcal{N}$  such that
\[
(\mathcal{M}, a_i \mid i\in I)\equiv(\mathcal{N}, b_i\mid i\in I),
\]
then there exist elementary extensions  $\Hat{\mathcal{M}}\succ\mathcal{M}$ and $\Hat{\mathcal{N}}\succ \mathcal{N}$ and an isomorphism $\mathcal{I}$ from $\Hat{\mathcal{M}}$ into $\Hat{\mathcal{N}}$ such that $\mathcal{I}(a_i)=b_i$ for all $i\in I$. 

\end{proposition}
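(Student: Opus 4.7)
The plan is to prove the stronger (``moreover'') version by the classical back-and-forth; the unparametrized statement follows by taking $I=\emptyset$. The strategy is to build two elementary chains $\cM = \cM_0 \prec \cM_1 \prec \cdots$ and $\cN = \cN_0 \prec \cN_1 \prec \cdots$, together with zig-zagging elementary embeddings $\varphi_n : \cM_n \to \cN_{n+1}$ and $\psi_n : \cN_n \to \cM_{n+1}$ satisfying the coherence identities
\begin{equation*}
\psi_n \circ \varphi_n = \iota^{\cM}_n, \qquad \varphi_{n+1} \circ \psi_n = \iota^{\cN}_n,
\end{equation*}
where $\iota^{\cM}_n$ and $\iota^{\cN}_n$ denote the chain inclusions $\cM_n\hookrightarrow\cM_{n+1}$ and $\cN_n\hookrightarrow\cN_{n+1}$. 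Taking $\hcM := \bigcup_n \cM_n$ and $\hcN := \bigcup_n \cN_n$, the Elementary Chain Property (Proposition~\ref{P:elementary chain}) then yields $\cM\prec\hcM$ and $\cN\prec\hcN$, and the $\varphi_n$'s glue to an $L$-isomorphism $\cI:\hcM\to\hcN$ with inverse glued from $(\psi_n)$.

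The construction is inductive. For the base case, work in the signature $L[I]$ obtained by naming $a_i$ and $b_i$ by a common constant $c_i$: the hypothesis $(\cM,a_i)_{i\in I}\equiv(\cN,b_i)_{i\in I}$ then becomes elementary equivalence of two $L[I]$-structures, so Corollary~\ref{C:elementary equivalence} provides $\cN_1\succ\cN_0$ and an elementary $L[I]$-embedding $\varphi_0:\cM_0\to\cN_1$ that automatically sends $a_i$ to $b_i$. For the inductive step, given $\varphi_n:\cM_n\to\cN_{n+1}$, expand the signature by naming every element of~$\cM_n$; since $\varphi_n$ is elementary, $(\cM_n, a)_{a\in\cM_n} \equiv (\cN_{n+1}, \varphi_n(a))_{a\in\cM_n}$, so Corollary~\ref{C:elementary equivalence} produces $\cM_{n+1}\succ\cM_n$ and an elementary embedding $\psi_n:\cN_{n+1}\to\cM_{n+1}$ with $\psi_n(\varphi_n(a))=a$ for all $a\in\cM_n$, i.\,e., $\psi_n\circ\varphi_n = \iota^{\cM}_n$. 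The construction of $\varphi_{n+1}:\cM_{n+1}\to\cN_{n+2}$ inverting $\psi_n$ on $\cN_{n+1}$ is entirely symmetric.

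It remains to verify that $\cI$ is a well-defined isomorphism. For $x\in\cM_n$, the relation $\psi_n(\varphi_n(x))=x$ combined with $\varphi_{n+1}\circ\psi_n = \iota^{\cN}_n$ gives $\varphi_{n+1}(x) = \varphi_{n+1}(\psi_n(\varphi_n(x))) = \varphi_n(x)$ inside $\cN_{n+2}$; by induction $\varphi_m(x) = \varphi_n(x)$ for all $m\ge n$, so $\cI(x):=\varphi_n(x)$ is independent of the chosen~$n$. Each $\varphi_n$ commutes with the interpretation of every function symbol of~$L$ (since it is elementary), and finite arity of the symbols ensures the same for $\cI$; the dual construction from $(\psi_n)$ provides its two-sided inverse. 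Finally $\cI(a_i)=\varphi_0(a_i)=b_i$, as required.

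The main technical obstacle is arranging the compatibility identities to hold \emph{exactly}, not merely up to elementary equivalence. This forces the application of Corollary~\ref{C:elementary equivalence} at each stage in the signature obtained by naming \emph{every} element of the previously constructed structure, so that the new elementary embedding literally inverts its predecessor on the nose. Once this bookkeeping is in place, passage to the union and verification of the isomorphism property are routine consequences of the elementary chain property.
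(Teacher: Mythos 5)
Your proof is correct and takes essentially the same back-and-forth route as the paper: both build zig-zagging elementary embeddings via Corollary~\ref{C:elementary equivalence} (applied in the expanded signature naming all elements of the previously constructed model to force the exact left-inverse relation), then pass to the union using the Elementary Chain Property; both handle the parametrized case by working in $L[I]$. The only blemish is a small indexing slip in your set-up paragraph (you write $\psi_n:\cN_n\to\cM_{n+1}$ and $\varphi_{n+1}\circ\psi_n=\iota^{\cN}_n$, where the body of your proof correctly uses $\psi_n:\cN_{n+1}\to\cM_{n+1}$, so the codomain of $\varphi_{n+1}\circ\psi_n$ is $\cN_{n+2}$ and the inclusion should be $\iota^{\cN}_{n+1}$), which does not affect the argument.
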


\begin{proof}

Inductively, for every ordinal $n<\omega$, use Corollary~\ref{C:elementary equivalence} to construct structures $\mathcal{M}_n, \mathcal{N}_n$, and maps $e_n, f_n$, such that
\begin{enumerate}[(i)]
\item
$\mathcal{M}_0=\mathcal{M}$, $\mathcal{N}_0=\mathcal{N}$,
\item
$e_n$ is an elementary embedding of $\mathcal{M}_n$ into $\mathcal{N}_{n+1}$ and $f_n$ is an elementary embedding of $\mathcal{N}_{n+1}$ into $\mathcal{M}_{n+1}$,
\item
$f_{n+1}(e_n(a))= a$ for every element $a$ of the universe of $\mathcal{M}_n$.
\end{enumerate}
Let $\Hat{\mathcal{M}}= \bigcup_{n<\omega} \mathcal{M}_n$,  $\Hat{\mathcal{N}}= \bigcup_{n<\omega} \mathcal{M}_n$, and $e= \bigcup_{n<\omega} e_n$. 
The $e$ is an isomorphism from  $\Hat{\mathcal{M}}$ into $\Hat{\mathcal{N}}$.

The second part of the statement is given by the preceding construction, since $e_0$ can be chosen to map $a_i$ to $b_i$ for each $i\in I$. 
Alternatively, let $C = (c_i\mid i\in I)$ be a collection of new constants and apply the result just proved to the $L[C]$-structures $\widetilde{\cM} = (\mathcal{M}, a_i \mid i\in I)$ and $\widetilde{\cN} = (\mathcal{N}, b_i\mid i\in I)$.
\end{proof}

\subsection{Types}
\label{S:types}
We begin this subsection defining the notion of \emph{finite satisfiability} of a set of formulas.

\begin{definition}\label{def:fin-satisf}
	If $\varphi(x_1,\dots, x_n)$ is a set of $L$-formulas and $\mathcal{M}$ is an $L$-structure, we say that $\varphi$ is \emph{finitely satisfiable} in $\mathcal{M}$ if there exists $r$ such that for every finite $\varphi_0\subseteq\varphi$,
	\[
		\tag{*}
	\mathcal{M}\appmod\exists_r x_1\dots \exists_r x_n\, \bigwedge_{\varphi\in\varphi_0}\varphi(x_1,\dots, x_n).
	\]
\end{definition}

Note that if $(*)$ holds, then for every finite $\varphi_0\subseteq\varphi_{\approx}$ there exist $c_1,\dots, c_n$ in $\mathcal{M}$ such that $\mathcal{M}\models\bigwedge_{\varphi\in\varphi_0}\varphi[c_1,\dots, c_n]$ (discrete satisfaction). 
However, the tuple $(c_1, \dots, c_n)$ depends on~$\varphi_0$.

We now introduce one of the central concepts of model theory: that of \emph{type}.

\begin{definition}
\label{D:type}
	If $\mathcal{M}$ is an $L$-structure,  $A$ is a set of elements of $\mathcal{M}$, and $(c_1,\dots,c_n)$ is a tuple of elements of $\mathcal{M}$, the \emph{type of  $(c_1,\dots,c_n)$ over $A$}, denoted $\tp_A(c_1,\dots, c_n)$, is the set of all $L[A]$-formulas $\varphi(x_1,\dots, x_n)$ such that $(\mathcal{M},a \mid a\in A)\appmod \varphi[c_1,\dots, c_n]$. 
        We denote $\tp_\emptyset(c_1,\dots, c_n)$ by  $\tp_L(c_1,\dots,c_n)$.

	If $T$ is a complete $L$-theory and $t=t(x_1,\dots, x_n)$ is a set of $L[C]$-formulas, where $C$ is a set of constant symbols not in $L$, we say that $t$ is an \emph{$n$-type of} $T$ if there exists an $L[C]$-model $\mathcal{M}$ of~$T$ and elements $c_1,\dots,c_n$ in $\mathcal{M}$ such that $t = \tp_A(c_1,\dots, c_n)$ where $A$ is the subset of~$\cM$ consisting of elements interpreting the constants in~$C$. 
In this case, we say $t$ \emph{is realized} in $\mathcal{M}$, and that the $n$-tuple $(c_1,\dots, c_n)$ \emph{realizes} $t$ in $\mathcal{M}$. 
\end{definition}

\begin{remarks}\hfill
	\label{R:types}

	\begin{enumerate}
	
		\item
		If $T$ is a complete theory and $t$ is a type for $T$, then $T\subseteq t$. 
In fact, if $\mathcal{M}$ is a model of $T$ and $t$ is a type over a set $A$ of elements of $\mathcal M$, then the set of sentences in $t$ is precisely the complete $L[A]$-theory $\thry(\mathcal{M}, a\mid a\in A)$, which extends~ $T=\thry(\cM)$.
		\item
		\label{I:types realized in extension}
		The notation $\tp_A(c_1,\dots, c_n)$ is imprecise in the sense that it does not make reference to the structure $\mathcal{M}$ where the elements $c_1,\dots, c_n$ and the set $A$ ``live''. 
However, since $T$ is a complete $L$-theory by assumption, precise knowledge of $\mathcal{M}$ is to a large extent unnecessary. 
In fact, if we are given a family $(t_i)_{i\in I}$ of types of $T$ such that, for $i\in I$, the type $t_i$ is over a set of parameters $A_i$ realized in an $L$-structure $\mathcal{M}_i$, then by Corollary~\ref{C:families of equivalent structures} there exists a single structure $\mathcal{N}$ in which all the structures $\mathcal{M}_i$ are elementary embedded. 
In fact, as we shall see in Proposition~\ref{P:Keisler-Shelah} below, given any cardinal $\kappa$ we can fix a ``big'' model $\cN$ of $T$ that is an elementary extension of every $L$-structure $\cM$ with cardinality $\card(\cM)<\kappa$, and such that $\cN$ realizes every type over any of its subsets $A$ with $\card(A)<\kappa$. 
Furthermore, the model $\mathcal{N}$ can be taken with the following additional homogeneity property: 
If $c_1,\dots, c_n$ and $c_1',\dots, c_n'$ are elements of $\mathcal{N}$ and $A$ is a set of elements  of $\mathcal{N}$ with $\card(A) < \kappa$, then $\tp_A(c_1,\dots, c_n)=\tp_A(c'_1,\dots, c'_n)$ if and only if there is an automorphism of $\mathcal{N}$ carrying $c_1,\dots, c_n$ to $c'_1,\dots, c'_n$ and fixing $A$ pointwise. 
This allows viewing types as orbits on the big model under the action of its group of automorphisms, and enables a Galois-theoretic viewpoint of complete theories. 
It also explains the use of the word ``type''. We will discuss this in more detail in part~\ref{sec:spaces-types}.
		\item
		If $T$ is a complete theory, $\mathcal{M}$ is an arbitrary model of $T$, $A$ is a set of elements of $\mathcal{M}$, and $t$ is a type of $T$ over~$A$, there is no guarantee that $t$ is realized in $\mathcal{M}$. 
However:
		\begin{enumerate}
			\item
			The equivalence between (\ref{I:type elementary equivalence}) and (\ref{I:type elementary extension}) of Proposition~\ref{P:types} below shows that $\mathcal{M}$ has an elementary extension where $t$ is realized. 
			In particular, every elementary extension of $\mathcal{M}$ has a further elementary extension where $t$ is realized.  
			
			\item $t$ is finitely satisfiable in every model of $\thry(\mathcal{M},a \mid a\in A)$.
			\end{enumerate}
		\end{enumerate}
\end{remarks}

\begin{proposition}
	\label{P:types}
Let $T$ be a complete theory and let $t(x_1,\dots,x_n)$ be a set of $L[C]$-formulas, where $C$ is a set of constant symbols not in $L$. 
The following conditions are equivalent.
\begin{enumerate}[\normalfont(1)]
	\item 
	\label{I:type elementary equivalence}
	There exists a model $\mathcal M$ of $T$, a set $A$ of elements of $\mathcal M$, 
	and elements $c_1,\dots, c_n$ of $\mathcal{M}$ such that $t(x_1,\dots, x_n)=\tp_A(c_1,\dots, c_n)$.

	\item 
	\label{I:type elementary extension}
	For every model $\mathcal{N}$ of $T$ there exists an elementary extension $\mathcal{N}'$ of $\mathcal{N}$, a set $A$ of elements of $\mathcal{N}'$, 
	and elements $c_1,\dots, c_n$ of $\mathcal{N}'$ such that $t(x_1,\dots, x_n)=\tp_A(c_1,\dots, c_n)$.
	
		\item 
		\label{I:type complete}
		\begin{enumerate}[\normalfont(a)]
			\item 
			There exists a positive rational $r$ such that, for every finite subset $\varphi\subset t$, the formula $\exists_r x_1\dots \exists_r x_n\, \bigwedge_{\varphi\in\varphi}\varphi(x_1,\dots, x_n)$ is in~$t$.
			\item
			\label{I:weak negation}
			For every $L[C]$-formula $\varphi(x_1,\dots, x_n)$, if $\varphi\notin t$, then there exists $\varphi'\appto\varphi$ such that $\wneg\varphi'\in t$.
		\end{enumerate}
\end{enumerate}
\end{proposition}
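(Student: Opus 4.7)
The plan is to establish the cyclic implications $(1)\Rightarrow(2)\Rightarrow(3)\Rightarrow(1)$, of which the first two directions are largely formal while the last requires the Compactness Theorem~\ref{T:compactness}.

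For $(1)\Rightarrow(2)$, I would invoke completeness of~$T$: any two models of~$T$ are elementarily equivalent, so the witnessing structure $\cM$ of~(1) and an arbitrary $\cN\appmod T$ satisfy $\cM\equiv\cN$. Corollary~\ref{C:elementary equivalence} then produces $\cN^{\ast}\succ\cN$ together with an elementary embedding $e:\cM\to\cN^{\ast}$; interpreting each constant $c\in C$ as $e(c^{\cM})$, the tuple $(e(c_1),\dots,e(c_n))$ realizes $t$ over $e(A)$ in~$\cN^{\ast}$. For $(2)\Rightarrow(3)$, pick any $\cN\appmod T$, apply~(2) to obtain $\cN^{\ast}\succ\cN$ realizing $t$ at some $A\subseteq\cN^{\ast}$ and $(c_1,\dots,c_n)$, and choose a rational $r$ bounding the distance from each $c_i$ to the anchor of its sort. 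Condition~(3)(a) follows because $(c_1,\dots,c_n)$ itself witnesses the bounded existential $\exists_r x_1\dots\exists_r x_n\bigwedge\varphi$ for every finite $\varphi\subset t$, and condition~(3)(b) is immediate from Proposition~\ref{P:negation} applied to the expansion $(\cN^{\ast},a\mid a\in A)$ at $(c_1,\dots,c_n)$.

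The substantive direction is $(3)\Rightarrow(1)$, which I would prove by a single application of the Compactness Theorem. Expand the signature to $L'=L[C][d_1,\dots,d_n]$ with fresh constants and form the $L'$-theory $\widetilde{t}:=\{\,\varphi(d_1,\dots,d_n)\mid\varphi\in t\,\}$; any model $\cM^{\ast}$ of $T\cup\widetilde{t}$ witnesses~(1), taking $A$ to be the interpretations in $\cM^{\ast}$ of the constants in~$C$. It suffices to establish finite approximate satisfiability of $T\cup\widetilde{t}$ relative to the uniform class $\Mod(T)$, which is uniform because $T$ is complete (Remark~\ref{rem:ThM-uniform}). Given finitely many approximations $\varphi'_i(d_1,\dots,d_n)\appto\varphi_i(d_1,\dots,d_n)$ with $\varphi_i\in t$, condition~(3)(a) produces the $L[C]$-sentence $\sigma:=\exists_r x_1\dots\exists_r x_n\bigwedge_i\varphi_i\in t$, and one seeks an $L[C]$-expansion of some $\cM\appmod T$ in which a suitable approximation of $\sigma$ is satisfied.

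The main obstacle is realizing this existential witness: one must combine~(3)(b) with the completeness of~$T$ to rule out that every approximation $\sigma'\appto\sigma$ fails in every model of~$T$---for if the weak negation $\wneg\sigma''$ of a still weaker approximation $\sigma''\appto\sigma'$ lay in~$T$, then~(3)(b) would force a close variant of $\wneg\sigma''$ into $t$, contradicting $\sigma\in t$. Once this consistency is established, some $L[C]$-expansion of a suitable $\cM\appmod T$ discretely satisfies $\sigma'$, and hence the whole finite subfamily of $\widetilde{t}_\approx$, closing the compactness verification. The delicate interplay of approximations and weak negations here---amounting essentially to an auxiliary verification, via Corollary~\ref{C:complete theories}, that the sentences of $t$ form a complete theory compatible with~$T$---is the technical heart of the argument.
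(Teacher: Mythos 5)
Your overall strategy matches the paper's: substitute fresh constants for the free variables of $t$, reduce the hard direction to the completeness of the resulting theory via Corollary~\ref{C:complete theories}, and invoke Corollary~\ref{C:elementary equivalence} for the elementary-extension clause. The paper organizes the proof as the two biconditionals $(1)\Leftrightarrow(3)$ (citing Corollary~\ref{C:complete theories}) and $(2)\Leftrightarrow(3)$ (citing Corollary~\ref{C:elementary equivalence}), whereas you run the cycle $(1)\Rightarrow(2)\Rightarrow(3)\Rightarrow(1)$; this is a legitimate reorganization, and your $(1)\Rightarrow(2)$ and $(2)\Rightarrow(3)$ are sound.

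The direction $(3)\Rightarrow(1)$ has two gaps. The minor one: a model $\cM^{\ast}$ of $T\cup\widetilde{t}$ gives only $t\subseteq\tp_A(c_1,\dots,c_n)$; the reverse inclusion follows from $(3)(b)$ together with Proposition~\ref{P:negation} (any $\varphi\notin t$ has some $\wneg\varphi'\in t$, so $\varphi$ is not approximately satisfied at the tuple), but you should say so explicitly. The serious one is the consistency step. You claim that if every approximation $\sigma'$ of $\sigma=\exists_r\bar{x}\bigwedge_i\varphi_i$ failed in every model of~$T$, then $(3)(b)$ would place a weak negation near $\sigma$ into~$t$, ``contradicting $\sigma\in t$.'' But co-membership of $\sigma$ and some $\wneg\sigma'''$ in the \emph{set} $t$ is not yet a contradiction unless $t$ is already known to be realizable---which is exactly what you are trying to establish. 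Moreover, $\sigma$ and its weak negations are $L[C]$-sentences while $T$ is an $L$-theory, so ``$\wneg\sigma''$ lay in~$T$'' does not parse; one would have to universally quantify over the $C$-constants, and then explain how an $L$-consequence of $T$ constrains membership in~$t$, a link that clause $(3)$ by itself does not provide. Indeed $(3)(a)$ and $(3)(b)$ hold vacuously for the degenerate choice of $t$ equal to the set of \emph{all} $L[C]$-formulas, which is not realizable; so some finite-satisfiability input---the analogue of clause $(2)(a)$ of Corollary~\ref{C:complete theories}---must be supplied rather than extracted from $(3)(b)$ in the way your sketch suggests.
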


\begin{proof}
The equivalence between (\ref{I:type elementary equivalence}) and (\ref{I:type complete}) is given by Corollary~\ref{C:complete theories} (by replacing the variables $x_1,\dots, x_n$ with constant symbols not already in $L[A]$). 
The equivalence between (\ref{I:type complete}) and (\ref{I:type elementary extension}) is given by Corollary~\ref{C:elementary equivalence}.
\end{proof}

\begin{remark}
	Part (\ref{I:type complete}) of Proposition~\ref{P:types} gives a purely syntactic criterion for a given set of formulas to be a type. 
\end{remark}

\subsection{Saturated and homogeneous models}
\label{sec:saturation}
Strictly for reasons of notational simplicity, we will focus our attention on one-sorted structures throughout this subsection. 
When a structure  $\mathcal{M}$ has only one sort, this sort is called the \emph{universe} of $\mathcal{M}$. 
The \emph{cardinality} of a one-sorted structure is defined as the cardinality of its universe.

The observations in Remark~\ref{P:types} lead naturally to the concept of \textit{saturated structure}:

\begin{definition}
	Let $\mathcal{M}$ is a structure with universe $M$ and let $\kappa$ be an infinite cardinal with $\kappa\le\card(M)$. 
We say that $\mathcal{M}$ is \emph{$\kappa$-saturated} if whenever $t$ is a type of $\thry(\mathcal{M})$ over a subset of $M$ of cardinality strictly less than $\kappa$, there is a realization of $t$ in $\mathcal{M}$.

In this context, we may abuse notation and use $\omega$ as a synonym for~$\aleph_0$, the cardinality of (infinite) countable sets. 
Thus, an $\omp$-saturated structure is one that realizes types over any countable subset of the universe.
The informal terminology ``countable saturation'' shall mean $\omp$-saturation, and not $\omega$-saturation.
\end{definition}

Notice that an $L$-structure $\mathcal{M}$ is $\kappa$-saturated if and only if, whenever $A$ is a subset of the universe of $\mathcal{M}$ of cardinality less than $\kappa$ and $\varphi(x_1,\dots, x_n)$ is a set of $L[A]$-formulas that is finitely satisfiable in $\mathcal{M}$, there exist $c_1,\dots, c_n$ in $\mathcal{M}$ such that $\varphi(x_1,\dots, x_n) \subseteq \tp_A(c_1,\dots, c_n)$.

\begin{proposition}
\label{P:Keisler-Shelah}
	If $\mathcal{M}$ is $\kappa$-saturated and $\mathcal{N}$ is a structure of cardinality less than $\kappa$ such that $\mathcal{N}\equiv\mathcal{M}$, then $\mathcal{N}$ can be elementarily embedded in $\mathcal{M}$.
\end{proposition}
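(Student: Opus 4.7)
The plan is a transfinite ``forth'' construction of~$e$, enumerating the universe of~$\cN$ as $(b_\alpha : \alpha<\lambda)$ for $\lambda = \card(\cN) < \kappa$, and recursively choosing images $e(b_\alpha)\in\cM$ so as to preserve the inductive invariant
\begin{equation*}
(\cN, b_\beta : \beta<\alpha) \equiv (\cM, e(b_\beta) : \beta<\alpha)
\end{equation*}
of elementary equivalence as $L[A_\alpha]$-structures, where $A_\alpha = \{b_\beta : \beta<\alpha\}$ is viewed as a set of new constant symbols interpreted on each side by the obvious assignment. The base case $\alpha=0$ reduces to the given hypothesis $\cN\equiv\cM$, and once the recursion is complete the resulting map is the desired elementary embedding, because the invariant applied at any stage $\alpha>\max\{\alpha_1,\dots,\alpha_n\}$ encodes precisely the satisfaction equivalence required by the definition of elementary embedding for any tuple $b_{\alpha_1},\dots,b_{\alpha_n}$ of~$\cN$.

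For the successor step I would consider, given the invariant at stage~$\alpha$, the $1$-type
\begin{equation*}
t(x) = \tp_{A_\alpha}(b_\alpha)
\end{equation*}
computed in $(\cN, b_\beta : \beta<\alpha)$ and regarded as a set of $L[A_\alpha]$-formulas. By the invariant, this structure and $(\cM, e(b_\beta) : \beta<\alpha)$ share a common complete theory, and by the syntactic characterization~(\ref{I:type complete}) of Proposition~\ref{P:types}, $t(x)$ is therefore a type of that theory over the parameter set $\{e(b_\beta) : \beta<\alpha\}\subset\cM$, whose cardinality is at most $|\alpha|<\kappa$ since $\lambda$ is a cardinal. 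The $\kappa$-saturation of~$\cM$ supplies a realization $c\in\cM$; setting $e(b_\alpha) := c$ preserves the invariant at stage~$\alpha+1$, because $t(x)$ records the full type of the newly added element over the constants already present. At limit stages~$\alpha$ the invariant carries through automatically: every $L[A_\alpha]$-sentence involves only finitely many parameters, hence already belongs to $L[A_{\alpha'}]$ for some~$\alpha'<\alpha$, so the invariants at earlier stages suffice.

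The only subtle point is the successor-stage assertion that $t(x)$ qualifies as a type of $\thry(\cM, e(b_\beta) : \beta<\alpha)$ over the set $\{e(b_\beta) : \beta<\alpha\}$. This does not follow from Definition~\ref{D:type} directly, because that definition demands \emph{exhibiting} a model of that complete theory that realizes~$t$; the whole point of the construction is to produce such a model, so invoking the definition would be circular. The way out is to use the purely syntactic characterization~(\ref{I:type complete}) of Proposition~\ref{P:types}, which is manifestly invariant under elementary equivalence of the parameter-expanded structures and thus transfers the type-hood of~$t(x)$ from the $\cN$-side to the $\cM$-side without needing a pre-existing realization. Once this syntactic bridge is in place, the rest of the argument is routine transfinite bookkeeping.
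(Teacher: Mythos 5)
Your proof is correct and matches the paper's own argument, which is exactly this transfinite ``forth'' construction maintaining the invariant $(\cN, a_i : i<\alpha)\equiv(\cM, a_i' : i<\alpha)$ and invoking $\kappa$-saturation at each successor step. Your worry about circularity at the successor stage is actually unnecessary---the structure $(\cN, b_\beta : \beta<\alpha)$ is itself an $L[A_\alpha]$-model of $\thry(\cM, e(b_\beta):\beta<\alpha)$ in which $b_\alpha$ realizes $t(x)$, so Definition~\ref{D:type} applies directly---though the detour through the syntactic characterization in Proposition~\ref{P:types} is also perfectly valid.
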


\begin{proof}
	Assume that $\mathcal{M}$ is $\kappa$-saturated, $\mathcal{N}\equiv\mathcal{M}$, and $\card(\cN)<\kappa$. 
If follows easily by induction that, if  $\alpha$ is an ordinal satisfying $\alpha<\kappa$ and $(a_i)_{i<\alpha}$ is a list of elements of $\mathcal{N}$, then there exists $(a'_i)_{i<\alpha}$ in $\mathcal{M}$ such that
	$(\mathcal{N}, a_i \mid i<\alpha)\equiv(\mathcal{M}, a_i'\mid i<\alpha)$. 
Thus, in the case when $(a_i)_{i<\alpha}$ lists all the elements of $\mathcal{N}$, the map $a_i\mapsto a_i'$ is an elementary embedding of $\mathcal{N}$ into $\mathcal{M}$.
\end{proof}

\begin{proposition}
If $\mathcal{M}$ is an $\aleph_1$-saturated $L$-structure, then $\appmod$ and $\models$ coincide on $\mathcal{M}$, i.e., for an $L$-formula $\varphi(x_1,\dots, x_n)$ and elements $a_1,\dots, a_n$ of suitable sorts, we have
\[
\mathcal{M}\appmod \varphi[a_1,\dots, a_n]
\quad\Leftrightarrow\quad
\mathcal{M}\models \varphi[a_1,\dots, a_n].
\]
\end{proposition}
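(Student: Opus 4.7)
The implication $\cM\models\varphi[\overline{a}]\Rightarrow\cM\appmod\varphi[\overline{a}]$ holds for arbitrary $L$-structures and needs no saturation: a straightforward induction on formulas shows that every approximation $\varphi'\appto\varphi$ is a logical weakening of $\varphi$ (atomic inequalities are loosened, $\exists_r$ is interpreted over a larger closed ball, $\forall\!_r$ over a smaller open ball, with the inner formula weakened in each case). Hence $\cM\models\varphi$ forces $\cM\models\varphi'$ for all $\varphi'\appto\varphi$, which is $\cM\appmod\varphi$.

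For the reverse implication I would induct on the complexity of $\varphi$, using $\aleph_1$-saturation only in the existential case. If $\varphi$ is atomic, say $t\le r$, then $\cM\appmod\varphi[\overline{a}]$ gives $t^{\cM}[\overline{a}]\le r'$ for every rational $r'>r$, hence $t^{\cM}[\overline{a}]\le r$ (and symmetrically for $t\ge r$). For $\varphi=(\psi_1\land\psi_2)$ or $\varphi=(\psi_1\lor\psi_2)$, I would argue by contrapositive: if $\cM\not\appmod\psi_i$ then there is $\psi_i'\appto\psi_i$ with $\cM\not\models\psi_i'$, and combining the $\psi_i'$ yields an approximation of $\varphi$ failing in~$\cM$; in the other direction one uses the induction hypothesis directly. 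For $\varphi=\forall\!_r x\,\psi$, given any $c$ in the \emph{open} ball $B(r)$, we have $c\in B(r')$ for some rational $r'<r$; the hypothesis $\cM\appmod\forall\!_r x\,\psi$ then yields $\cM\models\psi'[c,\overline{a}]$ for every $\psi'\appto\psi$, hence $\cM\appmod\psi[c,\overline{a}]$ and by induction $\cM\models\psi[c,\overline{a}]$; saturation is not needed here.

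The main difficulty is the case $\varphi=\exists_r x\,\psi$, where only partial witnesses $c_{r',\psi'}\in B[r']$ to each approximation $\exists_{r'}x\,\psi'$ are guaranteed \emph{a priori}, and a single witness $c\in B[r]$ satisfying $\psi$ is required. My plan is to assemble the set of $L[\overline{a}]$-formulas
\begin{equation*}
t(x) \;=\; \{\,\dd_{s_0}(x,a_{s_0})\le r'\,:\, \text{rational $r'>r$}\,\} \;\cup\; \{\,\psi'(x,\overline{a}) \,:\, \psi'\appto\psi\,\},
\end{equation*}
and verify that it is finitely satisfiable in $\cM$. For a finite $t_0\subseteq t$, involving finitely many approximations $\psi_1',\dots,\psi_k'$ and bounds $r_1',\dots,r_m'>r$, one checks by a secondary induction on $\psi$ that the directed structure of approximations produces a single $\psi'\appto\psi$ stronger than every $\psi_i'$; then setting $r^*=\min_i r_i'>r$, the approximation $\exists_{r^*}x\,\psi'$ of $\varphi$ is satisfied by $\cM$, yielding a witness $c\in B[r^*]\subseteq B[r_i']$ realizing all of~$t_0$. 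Since $t$ involves only countably many rationals and countably many approximations over finitely many parameters, $\aleph_1$-saturation supplies a single $c\in\cM$ realizing all of~$t(x)$. The bound formulas give $\dd_{s_0}(c,a_{s_0})\le r$, i.e., $c\in B[r]$; the approximation formulas give $\cM\appmod\psi[c,\overline{a}]$, and the inductive hypothesis applied to $\psi$ upgrades this to $\cM\models\psi[c,\overline{a}]$. Hence $\cM\models\exists_r x\,\psi[\overline{a}]$, completing the induction. The subtle point demanding care is the directedness of approximations (and in particular the uniform tightening of bounds) needed for finite satisfiability of~$t(x)$; everything else is routine.
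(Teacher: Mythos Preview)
Your proposal is correct and follows exactly the approach the paper indicates (the paper's proof is the single line ``By induction on the complexity of~$\varphi$''); you have carefully supplied the details of that induction, including the key existential step where $\aleph_1$-saturation is invoked. One small remark: in this paper's formulation of saturation (Section~\ref{sec:saturation}), what licenses the use of $\aleph_1$-saturation is that the parameter set $\{a_1,\dots,a_n\}$ is finite, not that the type $t(x)$ is countable---the cardinality of the type itself plays no role in the definition---so your sentence about ``countably many rationals and countably many approximations'' is harmless but beside the point.
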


\begin{proof}
By induction on the complexity of~$\varphi$.
\end{proof}

Recall that, if $\kappa$ is a cardinal, then $\kappa^+$ denotes the smallest cardinal larger than $\kappa$.

\begin{proposition}
If $\kappa$ is an infinite cardinal, then every structure has a $\kappa^+$-saturated elementary extension.
\end{proposition}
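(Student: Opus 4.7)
The plan is to realize the saturated extension as the union of an elementary chain of length $\kappa^+$ whose successive members realize more and more types, using the Compactness Theorem at each successor stage and the Elementary Chain Property at limits.

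First I would prove the single-step lemma: given any $L$-structure $\cM$, there exists $\cM \prec \cM^*$ such that every type (in finitely many variables) over any subset $A \subseteq \cM$ with $\card(A) \le \kappa$ is realized in $\cM^*$. To construct $\cM^*$, I enumerate all such types $(t_{\iota}(\overline{x}_\iota) : \iota \in J)$, introduce a fresh tuple $\overline{c}_{\iota}$ of constant symbols for each, and consider the theory
\[
T^* = \thry(\cM, a \mid a \in \cM) \cup \bigcup_{\iota \in J} t_{\iota}(\overline{c}_{\iota}).
\]
The first component is a uniform theory (by the remarks following Definition~\ref{D:model}). Every finite subset of $T^*$ mentions only finitely many types $t_{\iota_1},\dots,t_{\iota_n}$; by Proposition~\ref{P:types}, each $t_{\iota_k}$ is realized in some elementary extension of $\cM$, and by Corollary~\ref{C:elementary equivalence} (iterated $n$ times) there is a single elementary extension of $\cM$ realizing all of them simultaneously, which is thus a model of this finite subset of $T^*$. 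Applying the Compactness Theorem (in the form of Corollary~\ref{cor:compactness}) gives a model of $T^*$; its $L$-reduct is the desired extension $\cM^*$.

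Next I would build the chain. Set $\cM_0 = \cM$. For successor ordinals $\alpha + 1 < \kappa^+$, let $\cM_{\alpha+1} = (\cM_\alpha)^*$ obtained from the single-step lemma applied to $\cM_\alpha$. For limit ordinals $\alpha < \kappa^+$, let $\cM_\alpha = \bigcup_{\beta < \alpha} \cM_\beta$, which is an elementary extension of each $\cM_\beta$ by the Elementary Chain Property (Proposition~\ref{P:elementary chain}). Finally, let $\cN = \bigcup_{\alpha < \kappa^+}\cM_\alpha$.

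To verify that $\cN$ is $\kappa^+$-saturated, let $A \subseteq \cN$ with $\card(A) \le \kappa$ and let $t$ be a type of $\thry(\cN)$ over~$A$. Since $\kappa^+$ is regular with cofinality $\kappa^+ > \kappa$, the set $A$ is contained in some $\cM_\alpha$ with $\alpha < \kappa^+$; and since $\cM_\alpha \prec \cN$, the type $t$ is also a type of $\thry(\cM_\alpha)$ over $A$ (this uses the syntactic characterization of types in Proposition~\ref{P:types}(\ref{I:type complete}), which is preserved under elementary extension). By construction, $t$ is realized in $\cM_{\alpha+1} \prec \cN$, hence in $\cN$.

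The main subtlety — and the only nonroutine point — is the single-step lemma, specifically the verification that finite fragments of $T^*$ are approximately satisfiable in $\cM$ so that the Compactness Theorem applies. The rest is a standard transfinite construction; no cardinality bound on $\cM_\alpha$ is needed, since we only require $\kappa^+$-saturation of the final union and not a bound on its size.
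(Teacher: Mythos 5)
Your proof is correct and follows essentially the same transfinite elementary-chain construction as the paper: build $\cM = \cM_0 \prec \cM_1 \prec \cdots$ of length $\kappa^+$, realize all relevant types at each successor stage, take unions at limits, and appeal to regularity of $\kappa^+$ at the end. The one place you add genuine content is the single-step lemma: the paper simply cites Remark~\ref{R:types}-(\ref{I:types realized in extension}), which on its face only produces an elementary extension realizing a \emph{single} type, whereas step~(ii) of the paper's construction needs all types over small subsets of $\cM_i$ realized simultaneously in $\cM_{i+1}$; your compactness argument with the theory $T^*$ and fresh constant tuples $\overline{c}_\iota$ makes that gap-filling explicit, and it is sound (finite satisfiability follows exactly as you say from Proposition~\ref{P:types} and Corollary~\ref{C:elementary equivalence}, or equivalently Corollary~\ref{C:families of equivalent structures}, applied to finitely many types). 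Note also that your bound $\card(A)\le\kappa$ is the right one for $\kappa^+$-saturation; the paper's clause (ii) says ``less than $\kappa$'', which appears to be a minor slip.
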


\begin{proof}
Fix an infinite cardinal $\kappa$  and  an $L$-structure $\mathcal{M}$. 
Applying  Remark \ref{R:types}-(\ref{I:types realized in extension}), we construct inductively, for every ordinal $i<\kappa^+$, a structure $\mathcal{M}_i$, such that
\begin{enumerate}[(i)]
\item
$\mathcal{M}_0=\mathcal{M}$,
\item
$\mathcal{M}_i\prec \mathcal{M}_{i+1}$ and every type over a subset of the universe of $\mathcal{M}_i$ of cardinality less than $\kappa$ is realized in $\mathcal{M}_{i+1}$,
\item
If $j<\kappa^+$ is a limit ordinal, then $\mathcal{M}_j= \bigcup_{i<j} \mathcal{M}_i$.
\end{enumerate}
It follows from the Elementary Chain Property (Proposition~\ref{P:elementary chain}) that $(\mathcal{M}_i)_{i<\kappa^{+}}$ is an elementary chain, and that $\bigcup_{i<\kappa^+} \mathcal{M}_i$ is an elementary extension of~$\mathcal{M}$, which is clearly $\kappa^+$-saturated.
\end{proof}

Suppose that $\mathcal{M}$ is $\kappa$-saturated and let $\alpha$ be an ordinal with $\alpha<\kappa$. 
It follows directly from the $\kappa$-saturation of $\mathcal{M}$ that if $(a_i)_{i<\alpha}$, $(a'_i)_{i<\alpha}$ are families in $\mathcal M$ such that $(\mathcal{M}, a_i \mid i<\alpha)\equiv(\mathcal{M}, a_i'\mid i<\alpha)$, then for every element $b$ of $\mathcal{M}$ there exists an element $b'$ such that 
\[
(\mathcal{M}, b, a_i \mid i<\alpha)\equiv(\mathcal{M}, b', a_i'\mid i<\alpha).
\]
 A structure $\mathcal{M}$ that has this extension property for all pairs of families  $(a_i)_{i<\alpha}$, $(a'_i)_{i<\alpha}$ with $\alpha<\kappa$ is said to be \emph{$\kappa$-homogeneous}. 
Note that if $\mathcal{M}$ is $\kappa$-homogeneous with $\kappa=\card(\mathcal{M})$, then the $\kappa$-homogeneity can be used iteratively to extend the map $a_i\mapsto a_i$ ($i<\alpha$) to an automorphism of $\mathcal{M}$. 
This suggests the following definition.

\begin{definition}
Let $\kappa$ be an infinite cardinal. 
A structure $\mathcal{M}$ is \emph{strongly $\kappa$-homogeneous} if whenever $\alpha$ is an ordinal with $\alpha<\kappa$ and $(a_i)_{i<\alpha}$, $(a'_i)_{i<\alpha}$ are families in the universe of $\mathcal M$ such that $(\mathcal{M}, a_i \mid i<\alpha)\equiv(\mathcal{M}, a_i'\mid i<\alpha)$, there exists an automorphism $\mathcal{I}$ of $\mathcal{M}$ such that $\mathcal{I}(a_i)=a_i'$ for all $i<\alpha$.
\end{definition}

 The following theorem shows that, for arbitrarily large $\kappa$, every structure has elementary ultrapowers that are $\kappa$-saturated and $\kappa$-homogeneous.

\begin{theorem}
	\label{T:keisler-shelah}
	For every infinite cardinal $\kappa$ there exists an ultrafilter $\mathcal{U}$ with the following property: Whenever $\mathcal{M}$ is a metric structure of cardinality at most $2^\kappa$, the $\mathcal{U}$-ultrapower of $\mathcal{M}$ is both $\kappa^+$-saturated and $\kappa^+$-homogeneous.
\end{theorem}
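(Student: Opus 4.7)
The plan is to reduce the theorem to the existence of a suitable ultrafilter plus a classical saturation-implies-homogeneity argument. The scaffolding is parallel to the Keisler--Shelah theorem of discrete first-order model theory, but the semantics of approximate satisfaction requires that we handle the positive bounded formulas together with their approximations.

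First I would invoke, as a black-box from classical set theory, the existence of a $\kappa^+$-good countably incomplete ultrafilter $\mathcal{U}$ on an index set $I$ with $\card(I)=\kappa$ (Keisler's construction in ZFC). Recall that $\mathcal{U}$ is $\kappa^+$-good if every monotone map $f:\Pfin(\kappa)\to\mathcal{U}$ admits a multiplicative refinement $g:\Pfin(\kappa)\to\mathcal{U}$, meaning $g(s)\subseteq f(s)$ for all $s\in\Pfin(\kappa)$ and $g(s\cup s')=g(s)\cap g(s')$. This single $\mathcal{U}$ will serve for all $\cM$ simultaneously.

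Next, for a fixed $L$-structure $\cM$ with $\card(\cM)\le 2^{\kappa}$ (and $\card(L)\le\kappa$, which I would make explicit), I would prove that the ultrapower $\cM^{\mathcal{U}}$ is $\kappa^+$-saturated. Fix a parameter set $A\subseteq\cM^{\mathcal{U}}$ with $\card(A)\le\kappa$, and a finitely satisfiable set $t(x)$ of positive bounded $L[A]$-formulas; without loss of generality, $\card(t)\le\kappa$, and we may enumerate the elements of the approximation set $t_{\approx}$ by ordinals in $\kappa$. For each finite $F\in\Pfin(t_{\approx})$, by finite satisfiability and Theorem~\ref{T:Los} (\Los' Theorem), the set
\begin{equation*}
 X_F=\{i\in I : \cM\models \exists_r x\bigwedge_{\varphi\in F}\varphi(x,\overline{a}(i))\}
\end{equation*}
lies in $\mathcal{U}$, where the representatives $\overline{a}(i)$ of the finitely many parameters occurring in $F$ are chosen once and for all. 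The assignment $F\mapsto X_F$ is antitone-monotone in the appropriate sense; applying $\kappa^+$-goodness I obtain a multiplicative refinement $(Y_F)$, which lets me define a single element $b_i\in\cM$ for each $i\in I$ such that, for every $F$, the tuple $b_i$ witnesses $F$ at all coordinates $i\in Y_F$. The equivalence class $b=[(b_i)_{i\in I}]_{\mathcal{U}}$ then realizes $t$ in $\cM^{\mathcal{U}}$: for each individual $\varphi\in t$ and each approximation $\varphi'\appto\varphi$, the set $\{i : \cM\models \varphi'[b_i,\overline{a}(i)]\}$ contains $Y_{\{\varphi'\}}\in\mathcal{U}$, so by \Los' Theorem $\cM^{\mathcal{U}}\appmod\varphi[b,A]$.

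Finally, $\kappa^+$-homogeneity follows cleanly from $\kappa^+$-saturation by the standard type-transfer argument. Given $\alpha<\kappa^+$, families $(a_i)_{i<\alpha},(a_i')_{i<\alpha}$ in $\cM^{\mathcal{U}}$ with $(\cM^{\mathcal{U}},a_i)\equiv(\cM^{\mathcal{U}},a_i')$, and an element $b$, the type $t(x)=\tp_{\{a_i\}}(b)$ is finitely satisfiable in $(\cM^{\mathcal{U}},a_i)$, hence its parameter-translate $t'(x)$ obtained by substituting $a_i'$ for $a_i$ is finitely satisfiable in $(\cM^{\mathcal{U}},a_i')$ by elementary equivalence (via Proposition~\ref{P:types}). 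Since $t'$ is over $\le\kappa$ parameters, $\kappa^+$-saturation yields a realization $b'$, giving $(\cM^{\mathcal{U}},b,a_i)\equiv(\cM^{\mathcal{U}},b',a_i')$ as required.

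The main obstacle I anticipate is Step 2: carefully managing the interplay between approximations and the goodness property so that the constructed element satisfies every approximation of every formula in $t$, rather than each formula only on its own. The trick is to index the good-ultrafilter bookkeeping over the enumerated set $t_{\approx}$ rather than $t$ itself, which is harmless in cardinality but essential for the semantics of $\appmod$. The existence of the good ultrafilter (cited from Keisler) is the only set-theoretic input; everything else is a direct translation of the discrete Keisler--Shelah argument into the approximate-satisfaction framework developed in Section~\ref{sec:metric-structures}.
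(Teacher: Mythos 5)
The paper does not prove Theorem~\ref{T:keisler-shelah}: it explicitly omits the proof and cites Shelah's 1971 construction together with its adaptation by Henson and Iovino to Banach (hence metric) structures. There is therefore no internal proof to compare yours against. What you propose is a coherent alternative — the ``Keisler'' side of the Keisler--Shelah story, replacing Shelah's bespoke ultrafilter by a $\kappa^+$-good countably incomplete ultrafilter on $\kappa$ (whose ZFC existence is due to Kunen) plus the classical ``goodness implies $\kappa^+$-saturation'' argument. The key metric-specific adaptation, running the bookkeeping over $t_{\approx}$ rather than over $t$ so that \L os' Theorem~\ref{T:Los} delivers the per-coordinate discretely-satisfied approximations, is exactly right, and your derivation of $\kappa^+$-homogeneity from $\kappa^+$-saturation via parameter translation and Proposition~\ref{P:types} is standard and correct.

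There is, however, a genuine gap in the saturation step as written. After obtaining the multiplicative refinement $(Y_F)$, you claim you can ``define a single element $b_i$'' witnessing every $F$ with $i\in Y_F$. Multiplicativity alone does not justify this: it only controls finite intersections, so you need, for each $i$, that the family $\{F\in\Pfin(t_{\approx}) : i\in Y_F\}$ is \emph{finite}, and only then does multiplicativity give $i\in Y_{\bigcup\{F : i\in Y_F\}}\subseteq X_{\bigcup\{F : i\in Y_F\}}$ so that a common witness $b_i$ exists. This finiteness is precisely what countable incompleteness buys you — shrink each $X_F$ to $X_F\cap I_{\lvert F\rvert}$ along a descending chain $(I_n)$ with empty intersection before applying goodness — and although you list countable incompleteness as a property of $\cU$, the sketch never actually invokes it, and the argument does not go through without that intersection trick. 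A smaller observation, more about scope than correctness: your route in fact gives $\kappa^+$-saturation of $\cM^{\cU}$ for \emph{every} $\cM$ in a signature of size $\le\kappa$, with no cardinality bound on $\cM$ at all, so the hypothesis $\card(\cM)\le 2^{\kappa}$ is idle in your argument. That bound appears in the statement because the paper's later use (the isomorphism of $(\cM)_{\cU}$ and $(\cN)_{\cU}$ in Theorem~\ref{T:beth}) needs the ultrapowers to be \emph{fully} saturated of equal cardinality; $\kappa^+$-saturation of a model of size $2^{\kappa}$ is full saturation only when $2^{\kappa}=\kappa^{+}$, and it is precisely to dispense with that GCH instance that Shelah's quite different ultrafilter construction is invoked.
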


We omit the proof of Theorem \ref{T:keisler-shelah}. 
The construction of the ultrafilter is due to S.~Shelah~\cite{Shelah:1971}, and it builds on ideas of H.~J.~Keisler and K.~Kunen. 
Shelah's epochal proof is for traditional first-order (i.e., discrete structures), but his argument was adapted by C.~W.~Henson and the second author for structures based on Banach spaces \cite[Corollary 12.3]{Henson-Iovino:2002}. 
The proof for Banach structures applies to general metric structures without significant changes.

\subsection{Extending the language}
\label{sec:extensions}

Given a signature $L$, a sort index set $\mathbf S$, indices $s_0,s_1,\dots, s_n\in\mathbf S$, and a new function symbol $f:s_1\times\dots\times s_1\to  s_0$, we will denote by $L[f]$ the signature that results from adding $f$ to $L$. 
If $\mathcal{M}$ is an $L$ structure based on $(M^{(s)}\mid s\in\mathbf{S})$ and $F$ is a function from $M^{(s_1)}\times\dots\times M^{(s_n)}$ into $M^{(s_0)}$, then will denote by $(\mathcal {M}, F)$ the expansion of $\mathcal{M}$ to $L[f]$ that results from defining $f^{\mathcal M}$ as $F$. 

Let $L$ be a signature and  consider the signature $L[f]$, where $f$ is a function symbol not in~$L$. 
Fix also an $L[f]$-theory $T$ (which could be empty). 
In standard mathematical practice there are cases where, in models of $T$, the new symbol can be dispensed with because it is already definable through $L$-formulas. 
This leads to the notion of \emph{definability}, which is the main concern of this subsection. 
There are several ways to formalize the concept of $f$ being $T$-\emph{definable}; for instance, one could say that
\begin{quote}
If  $(\mathcal{M},F)\appmod T$, where $\mathcal{M}$ is an $L$-structure, then $F$ is determined uniquely by $T$, i.e., if $(\mathcal{M},F),(\mathcal{M},F')\appmod T$, then $F=F'$.
\end{quote}
or, alternatively,
\begin{quote}
In all models of $T$, formulas involving $f$ can be approximated by $L$-formulas.
\end{quote}
Below, we  prove that these two conditions are equivalent (see Theorem~\ref{T:beth}); but first we must formalize what we mean by ``approximated'' in the preceding statement.

Recall that if $(M,d,a)$ is a pointed metric space, the open ball of radius $r$ around the anchor point $a$ is denoted or $B_M(r)$, or $B(r)$ if $M$ is given by the context. 
If $\mathcal{M}$ is a metric structure and $a$ is in $B_{M^{(s)}}(r)$, where $M^{(s)}$ is one of the sorts of $\mathcal{M}$, we may informally say that $a$ is an element of $B_{\mathcal M}(r)$.

\begin{definition}\label{def:explicit-def}
Let $L$ be a signature, let $f$ be a real-valued $n$-ary function symbol, and let $T$ be a uniform $L[f]$-theory. 
We will say that $f$ is \emph{explicitly defined by $T$ in $L$} if the following condition holds for every $r\in\mathbb{R}$ and every pair of nonempty intervals $I\subset J \subseteq \mathbb{R}$ with $I$ closed and $J$ open:  
There exists an $L$-formula $\varphi_{r,I,J} = \varphi(x_1,\dots, x_n)$ such that, whenever $(\mathcal{M},F)\appmod T$  and $a_1,\dots, a_n$ are elements of $B_{\mathcal M}(r)$, we have
\begin{align*}
F(a_1,\dots, a_n)\in I
\quad&\Rightarrow\quad
\mathcal{M}\appmod\varphi[a_1,\dots, a_n],\\
\mathcal{M}\appmod\varphi[a_1,\dots, a_n]
\quad&\Rightarrow\quad
F(a_1,\dots, a_n)\in J.
\end{align*}
The collection $(\varphi_{r,I,J} \mid r\in\RR, \emptyset\ne I\subset J)$ is a \emph{definition scheme} for~$F$ (modulo~$T$). 
\end{definition}

\begin{remark}\label{rem:explicit-def}
  A definition scheme $\Sigma = (\varphi_{r,I,J} : r\in\RR, \emptyset\ne I\subset J)$ ($I$ closed, $J$ open) characterizes~$F$ uniquely in any structure $(\cM,F)\appmod T$.  
Namely, if $a_1,\dots,a_n\in B_{\cM}(r)$, then $F(a_1,\dots,a_n)$ is the unique real number $t$ with the following property: $\cM\appmod\varphi_{r,I,J}[a_1,\dots,a_n]$ whenever $J\supset I\ni t$.  
Certainly, $F(a_1,\dots,a_n)$ is one such number~$t$ (since $\Sigma$ is a definition scheme for~$F$).  
Conversely, if $t$ has the stated property, we have $F(a_1,\dots,a_n)\in J$ whenever $J\supset I\ni t$, hence $t = F(a_1,\dots,a_n)$.
\end{remark}

The theorem below is the most fundamental result about first-order definability. 
The topological proof we give here is not commonly known. 
We thank Xavier Caicedo for pointing out an error in an earlier draft of this manuscript and suggesting a correction.

\begin{theorem}[Beth-Svenonius Definability Theorem]
\label{T:beth}
Let $L$ be a signature, let $f$ be a real-valued function symbol, and let $T$ be a uniform $L[f]$-theory. 
The following conditions are equivalent.
\begin{enumerate}[\normalfont(1)]
\item
\label{I:explicit defn}
$f$ is explicitly defined by $T$ in $L$.
\item
\label{I:implicit defn}
If $\mathcal{M}$ is an $L$-structure, then
\[
(\mathcal{M},F), (\mathcal{M},F') \appmod T
\quad\Rightarrow\quad
F=F'.
\]

\end{enumerate}

\end{theorem}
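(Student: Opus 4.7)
The direction $(1)\Rightarrow(2)$ follows immediately from Remark~\ref{rem:explicit-def}: a definition scheme $\Sigma$ uniquely determines the value $F(\bar a)$ at every $\bar a\in B_{\cM}(r)^n$ (for each radius $r$), so any two $T$-compatible interpretations $F,F'$ must coincide on the entire domain $M^{(s_1)}\times\cdots\times M^{(s_n)}$. The substantive content is $(2)\Rightarrow(1)$, which I will prove by a topological separation argument in the logic topology.

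Fix $r\in\RR$ and nonempty intervals $I\subset J\subseteq\RR$ with $I$ closed and $J$ open. Adjoin fresh constants $\bar c=(c_1,\ldots,c_n)$ and set $T^* = T\cup\{d(c_i,a_{s_i})\le r : 1\le i\le n\}$, a uniform $L[f][\bar c]$-theory whose model class is compact in the logic topology by Remark~\ref{R:topology}. Because $I$ is closed and $J$ is open, each of ``$f(\bar c)\in I$'' and ``$f(\bar c)\notin J$'' is expressible as a single positive bounded $L[f][\bar c]$-sentence under approximate satisfaction. In the compact Hausdorff space $S_{L[\bar c]}$ of complete $L[\bar c]$-theories (the Kolmogorov quotient of the logic topology), I will consider
\[
K_I \;=\; \tilde\rho\bigl(\bigl[T^*\cup\{f(\bar c)\in I\}\bigr]\bigr), \qquad
K_J \;=\; \tilde\rho\bigl(\bigl[T^*\cup\{f(\bar c)\notin J\}\bigr]\bigr),
\]
where $\tilde\rho:S_{L[f][\bar c]}\to S_{L[\bar c]}$ is the $L[\bar c]$-reduct map on type spaces. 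Each $K_I,K_J$ is the continuous image of a compact subspace, hence compact and closed in $S_{L[\bar c]}$.

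The technical core is a \emph{Transport Lemma}: if $(\cM_1,F_1,\bar a^1),(\cM_2,F_2,\bar a^2)\appmod T^*$ and $(\cM_1,\bar a^1)\equiv_{L[\bar c]}(\cM_2,\bar a^2)$, then $F_1(\bar a^1)=F_2(\bar a^2)$. This is the one place where hypothesis~$(2)$ is essentially used, and it will be the main obstacle. My plan is to invoke Theorem~\ref{T:keisler-shelah} to fix an ultrafilter $\cU$ for which the $L[f][\bar c]$-ultrapowers of both structures are sufficiently saturated and homogeneous; Proposition~\ref{P:isomorphic elementary extensions} then yields an $L[\bar c]$-isomorphism $\cI:(\cM_1,\bar a^1)^{\cU}\to(\cM_2,\bar a^2)^{\cU}$ with $\cI(\bar a^1)=\bar a^2$. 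Transporting $(F_2)^{\cU}$ back along $\cI$ gives a second function $F'$ on $(\cM_1)^{\cU}$; \Los' Theorem~\ref{T:Los} ensures that both $\bigl((\cM_1)^{\cU},(F_1)^{\cU}\bigr)$ and $\bigl((\cM_1)^{\cU},F'\bigr)$ approximately satisfy $T$, so hypothesis~$(2)$ forces $(F_1)^{\cU}=F'$, and evaluating at $\bar a^1\leftrightarrow\bar a^2$ yields $F_1(\bar a^1)=F_2(\bar a^2)$. As an immediate consequence, $K_I\cap K_J=\varnothing$: two realizations of a common $L[\bar c]$-type lying in $K_I$ and $K_J$ would place a single value of $f(\bar c)$ both inside $I$ and outside $J$, contradicting $I\subset J$.

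Once disjointness is in hand the conclusion is routine. I would write $K_I=\bigcap_\alpha[\varphi_\alpha]$ as an intersection of basic closed sets of the logic topology, which is possible since $K_I$ is closed. Then $K_J\cap\bigcap_\alpha[\varphi_\alpha]=\varnothing$, so compactness of $K_J$ furnishes a finite subfamily with $K_J\cap[\varphi_{\alpha_1}]\cap\cdots\cap[\varphi_{\alpha_k}]=\varnothing$. Setting $\varphi(\bar c)=\bigwedge_{i=1}^k\varphi_{\alpha_i}$, a single positive bounded $L[\bar c]$-sentence, I obtain $K_I\subset[\varphi]$ and $K_J\cap[\varphi]=\varnothing$. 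Reading this back at the level of structures: for every $(\cM,F,\bar a)\appmod T^*$, the condition $F(\bar a)\in I$ forces $\cM\appmod\varphi[\bar a]$, while $F(\bar a)\notin J$ forces $\cM\not\appmod\varphi[\bar a]$; equivalently, $\cM\appmod\varphi[\bar a]$ entails $F(\bar a)\in J$. Viewing $\varphi$ as an $L$-formula $\varphi_{r,I,J}(\bar x)$ via the substitution $\bar c\mapsto\bar x$ and letting $r,I,J$ vary produces the full definition scheme, completing the proof.
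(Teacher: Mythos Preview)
Your proof is correct and follows essentially the same strategy as the paper's: the Transport Lemma is exactly the paper's Claim (proved identically via the Keisler--Shelah ultrapower argument and hypothesis~(2)), and the remainder is a topological separation of two disjoint closed sets in the reduct space. The only cosmetic differences are that you pass immediately to the Kolmogorov quotient $S_{L[\bar c]}$ and separate $K_I,K_J$ by extracting a finite subintersection via compactness, whereas the paper stays in the non-Hausdorff class of structures, first proves that the reduct map $g:\sC\to\sD$ is a homeomorphism (using the Claim together with the observation about topologically indistinguishable points), and then invokes normality of the compact regular space $\sD$ to produce the separating closed neighborhood~$[\varphi]$.
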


Before proving Theorem~\ref{T:beth}, let us make the following observation about general topological spaces.
If $Z$ is a topological space, two points $x,y\in Z$ are said to be \emph{topologically indistinguishable}, denoted $x\equiv y$, if every neighborhood of $x$ contains $y$ and every neighborhood of $y$ contains~$x$. 
Now, we observe that if $X,Y$ are regular topological spaces with $X$ compact and $g:X\to Y$ is a continuous bijection, then $g$ is a homeomorphism if and only if
\begin{equation}
g(x)\equiv g(y)
\quad\Rightarrow\quad
x\equiv y.\label{eq:g-mod-equiv}
\end{equation}
Indeed, any such $g$ maps indistinguishable points to indistinguishable points (by continuity), so $g$~induces a continuous map $\overline{g}:\overline{X}\to \overline{Y}$ between the spaces~$\overline{X}=X/{\equiv}$ and~$\overline{Y}=Y/{\equiv}$. 
Since topologically distinguishable points of a normal space have disjoint neighborhoods, $\overline{X}$ and $\overline{Y}$ are Hausdorff spaces, with $\overline{X}$ compact; 
moreover, $\overline{g}$ is a bijection, by~\eqref{eq:g-mod-equiv}, and hence a homeomorphism. 
Clearly, $g$ is a homeomorphism also.

\begin{proof}
 $(\ref{I:explicit defn})\Rightarrow(\ref{I:implicit defn})$: 
This is an immediate consequence of Remark~\ref{rem:explicit-def}.

As a preliminary step to showing $(\ref{I:implicit defn})\Rightarrow(\ref{I:explicit defn})$, we prove the following:
\begin{claim}
Assume that (\ref{I:implicit defn}) holds. 
If $L'$ is a signature extending~$L$, and $\mathcal{M}$, $\mathcal{N}$ are $L'$-structures admitting $L'[f]$-expansions $(\mathcal{M},F)$, $(\mathcal{N},G)$ that are models of~$T$, then
\[
\mathcal{M}\equiv_{L'}\mathcal{N}
\quad\Rightarrow\quad
(\mathcal{M},F)\equiv_{L'[f]}(\mathcal{N},G).
\]
\end{claim}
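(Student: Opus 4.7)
The plan is to exhibit a common $L'[f]$-elementary extension of $(\mathcal{M}, F)$ and $(\mathcal{N}, G)$; once this is done, transitivity of $\equiv_{L'[f]}$ together with $L'[f]$-elementary equivalence to the common extension (cf.\ Corollary~\ref{C:elementary equivalence}) yields the conclusion.

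First I would apply Theorem~\ref{T:keisler-shelah} to obtain an ultrafilter $\mathcal{U}$ such that both ultrapowers $(\mathcal{M}^{*},F^{*}) := \big((\mathcal{M},F)\big)_{\mathcal{U}}$ and $(\mathcal{N}^{*},G^{*}) := \big((\mathcal{N},G)\big)_{\mathcal{U}}$ become $\kappa^{+}$-saturated and $\kappa^{+}$-homogeneous, where $\kappa$ bounds the cardinalities of $\mathcal{M}$, $\mathcal{N}$, and $L'$. By \Los' Theorem~\ref{T:Los}, both ultrapowers are $L'[f]$-elementary extensions of $(\mathcal{M},F)$ and $(\mathcal{N},G)$ respectively, and both approximately satisfy $T$. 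Because ultraproducts commute with taking reducts, the $L'$-reducts $\mathcal{M}^{*}$ and $\mathcal{N}^{*}$ coincide with $(\mathcal{M})_{\mathcal{U}}$ and $(\mathcal{N})_{\mathcal{U}}$; from $\mathcal{M}\equiv_{L'}\mathcal{N}$ and \Los, we then have $\mathcal{M}^{*}\equiv_{L'}\mathcal{N}^{*}$, and both are saturated and homogeneous.

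Second, I would produce an $L'$-isomorphism $\mathcal{I}\colon\mathcal{M}^{*}\to\mathcal{N}^{*}$ by a Cantor-style back-and-forth argument exploiting saturation and homogeneity (cf.\ Proposition~\ref{P:Keisler-Shelah}); if the cardinalities happen to differ, this is remedied by passing to a further common ultrapower. Alternatively, one may invoke Proposition~\ref{P:isomorphic elementary extensions} iteratively together with the Compactness Theorem~\ref{T:compactness}: each time one enlarges the $L'$-structure to equip both sides with a common $L'$-isomorphic extension, one simultaneously extends $F$ and $G$ by compactness (adjoining the $L'[f]$-elementary diagram of $(\mathcal{M},F)$ to the $L'$-elementary diagram of the new extension), then passes to a direct limit.

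Third (the key step): transport $F^{*}$ along $\mathcal{I}$ to obtain an expansion $\big(\mathcal{N}^{*},\mathcal{I}_{*}(F^{*})\big)$, which approximately satisfies $T$ since $T$ is preserved under $L'[f]$-isomorphism. Since also $(\mathcal{N}^{*},G^{*})\appmod T$, hypothesis~(\ref{I:implicit defn})---which transfers from $L$ to $L'$ because the $L$-reduct of any $L'$-structure carries the very same interpretation of $f$---forces $\mathcal{I}_{*}(F^{*})=G^{*}$. Therefore $\mathcal{I}$ is in fact an $L'[f]$-isomorphism $(\mathcal{M}^{*},F^{*})\cong_{L'[f]}(\mathcal{N}^{*},G^{*})$, and the chain
\[
(\mathcal{M},F)\prec_{L'[f]}(\mathcal{M}^{*},F^{*})\cong_{L'[f]}(\mathcal{N}^{*},G^{*})\succ_{L'[f]}(\mathcal{N},G)
\]
delivers $(\mathcal{M},F)\equiv_{L'[f]}(\mathcal{N},G)$.

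The main obstacle is the second step: producing an honest $L'$-iso\-mor\-phism rather than merely a pair of $L'$-elementary embeddings in opposite directions. Matching cardinalities and invoking homogeneity requires care; the iterated compactness construction via Proposition~\ref{P:isomorphic elementary extensions} sidesteps the cardinality bookkeeping, but at the cost of a longer zig-zag argument whose limit one must verify really produces models of $T$---which is precisely where hypothesis~(\ref{I:implicit defn}) is used to collapse the two partial $L'[f]$-expansions into one.
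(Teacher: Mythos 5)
Your proposal follows essentially the same route as the paper's proof: pass to $\mathcal{U}$-ultrapowers via Theorem~\ref{T:keisler-shelah}, obtain an $L'$-isomorphism $\mathcal{I}$ between the ultrapowers, note that by \Los' Theorem both ultrapowers (with their lifted interpretations of $f$) model $T$, transport $(F)_{\mathcal{U}}$ along $\mathcal{I}$, and apply hypothesis~(\ref{I:implicit defn}) on the $L$-reduct of the target to force $\mathcal{I}\bigl((F)_{\mathcal{U}}\bigr) = (G)_{\mathcal{U}}$, whence $(\mathcal{M},F)\equiv_{L'[f]}(\mathcal{N},G)$. Your observation that the passage from $\kappa^+$-saturation and $\kappa^+$-homogeneity to an honest $L'$-isomorphism of the ultrapowers is not fully automatic is correct; the paper simply asserts the isomorphism citing Theorem~\ref{T:keisler-shelah}, which is the metric-structures form of the Keisler--Shelah theorem (whose proof arranges the cardinal bookkeeping so that the ultrapowers become ``special'' models of matching cardinality, and hence isomorphic). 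Your alternative route via Proposition~\ref{P:isomorphic elementary extensions} and compactness also works in principle, but as you note it requires simultaneously extending $F$ and $G$ along the zig-zag, which is precisely what the ultrapower construction gives for free.
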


To prove the claim, let $L'$ extend $L$ and let $\mathcal{M}$, $\mathcal{N}$ be elementarily equivalent $L'$-structures admitting expansions $(\mathcal{M},F)$, $(\mathcal{N},G)$ that are models of~$T$. 
Using Theorem~\ref{T:keisler-shelah}, fix an ultrafilter $\mathcal{U}$ such that the $\mathcal U$-ultrapowers $(\mathcal{M})_{\mathcal{U}},(\mathcal{N})_{\mathcal{U}}$ are isomorphic $L'$-structures. 
By \Los's Theorem~\ref{T:Los}, the $\cU$-ultrapower $(\cM,F)_{\cU} = ((\cM)_{\cU},(F)_{\cU})$ is an $L'[f]$-structure elementarily equivalent to~$(\cM,F)$; similarly, $(\cN,G)_{\cU} = ((\cN)_{\cU},(G)_{\cU}) \equiv_{L'[f]}(\cN,G)$. 
Let $\mathcal{I}$ be an $L'$-isomorphism from $(\mathcal{M})_{\mathcal{U}}$ into $(\mathcal{N})_{\mathcal{U}}$.  
\emph{A fortiori,} $\mathcal{I}$ is an $L$-isomorphism between (the $L$-reducts of)~$(\cM)_{\cU}$ and~$(\cN)_{\cU}$. 
By~(\ref{I:implicit defn}), we have $\mathcal{I}((F)_{\mathcal{U}})=(G)_{\mathcal{U}}$; thus, $(\mathcal{M},F)_{\mathcal{U}}$ and $(\mathcal{N},G)_{\mathcal{U}}$ are isomorphic $L'[f]$-structures, so $(\mathcal{M},F)\equiv_{L'[f]}(\mathcal{N},G)$. This proves the claim.

$(\ref{I:implicit defn})\Rightarrow(\ref{I:explicit defn})$: 
Assume that $(\ref{I:implicit defn})$ holds.
The signature of $L[f]$ specifies that $f:s_1\times\dots\times s_n\to  s_{\mathbb R}$ for some sorts $s_1,\dots,s_n$. 
Fix a new tuple $\overline{c} = c_1,\dots, c_n$ of constant symbols such that $c_i$ is of sort $s_i$ for $i=1,\dots, n$ and let $L' = L[\overline{c}]$. 
For any fixed rational $r>0$, let $T'_r$ be the $L'[f]$-theory obtained by adding to~$T$ the sentences $\dd(c_i,a_{s_i})\le r$, for $i=1,\dots,n$, where $a_{s_i}$ is (the symbol for) the anchor of sort~$s_i$. 
Clearly, $T'_r$ is a uniform $L'[f]$-theory. 
Let ${\sC}$ be the class of all models of~$T'_r$, i.e., of $L'[f]$-structures of the form $(\cM, b_1, \dots, b_n, F)$, where $(\cM, F)$ is a model of~$T$, and $b_i$ is an element of sort~$s_i$ of~$\cM$ that satisfies $\dd(b_i,a_{s_i})\le r$ for $i=1,\dots,n$.
Let $\mathscr D$ be the class of all $L'$-structures $\cN$ that are $L'$-reducts of some $L'[f]$-structure $(\cN, F)\in\mathscr C$ and let $g:\mathscr C\to\mathscr D$ be the map $(\cN, F)\mapsto \cN$. 
Note that $g$ is surjective by definition.

We regard the classes $\mathscr C$ and $\mathscr D$ as topological spaces as follows. 
A basis for the closed classes of~$\mathscr{C}$ is given by all subclasses of the form $\Mod_{\mathscr{C}}(\varphi)$ where $\varphi$ is an $L'[f]$-sentence (the union of finitely many such basic closed subclasses is of the same form, since the logic is closed under disjunction); 
in other words, the closed subclasses of $\mathscr C$ are those of the form $\Mod_{\mathscr{C}}(\Phi)$, where $\Phi$ is an $L'[f]$-theory.
Similarly, the closed subclasses of $\mathscr{D}$ are those of the form $\Mod_{\mathscr{D}}(\Phi)$, where $\Phi$ is an $L'$-theory.%
\footnote{Later, in Subsection~\ref{sec:spaces-types}, we shall refer to this as the \emph{logic topology}.}
Both $\mathscr C$ and $\mathscr D$ are regular, with $\mathscr{C}$ compact, by Remark~\ref{R:topology}, and the surjection $g$ is clearly continuous. 

Let $\cN\in\mathscr{D}$, so $\cN$ is the $L'$-reduct of some model $(\cN,F)\appmod T'_r$. 
Let $\cM = \cN\restriction L$ be the $L$-reduct of~$\cN$.
Clearly, $(\cM,F)$ is a model of $T'_r\restriction L[f] = T$; 
moreover, by~$(\ref{I:implicit defn})$, $(\cM,F)$ is the unique expansion of $\cM$ to a model of~$T$. 
Since $T'_r$ extends~$T$, $(\cN,F)$ is necessarily the unique preimage of~$\cN$ under~$g$, so $g$ is injective; thus, $g$ is a continuous bijection. 
By the claim and the topological observation immediately following the statement of the theorem, we conclude that $g$ is a homeomorphism. 

Fix intervals $\emptyset\ne I = [p,q]\subset J = (u,v)$ with $p,q,u,v$ rational, and let
\begin{align*}
  K &= \Mod_{\sC}(p\le f(\overline{c}) \le q), &
  K' &= \Mod_{\sC}(f(\overline{c})\le u \vee f(\overline{c})\ge v).
\end{align*}
Since $K$ and $K'$ are closed and disjoint, so are the subsets $g(K)$ and $g(K')$ of the homeomorphic image $\sD = g(\sC)$, which is compact since $\sC$ is. 
Since any compact regular space is normal, there exists a closed neighborhood~$Q$ of $g(K)$ disjoint from~$g(K')$. 
By compactness of~$g(K')$, $Q$ may be taken of the form $\Mod_{\sD}(\varphi(\overline{c}))$ for some $L$-formula $\varphi_{I,J,r} = \varphi(\overline{x})$.
Since the interpretation of $\overline{c}$ is arbitrary in $B_{\cM}(r)$, the scheme $(\varphi_{I,J,r} : r\in\RR, \emptyset\ne I\subset J)$ defines $f$ explicitly by~$T$.
\end{proof}

\begin{definition}
\label{D:definable real-valued}
If $L$ is a signature, $f$ is a real-valued function symbol, and $T$ is a uniform $L[f]$-theory,
we will say that $f$ is $T$-\emph{definable} in $L$ if it satisfies the equivalent conditions of Theorem~\ref{T:beth}.
When $L$ is given by the context, we may simply say that $f$ is $T$-\emph{definable};
furthermore, if $L$ and $T$ are given by the context, we may say that $f$ is ``definable''.
\end{definition}

If $T$ is a uniform theory, Theorem~\ref{T:beth} allows us to see the real-valued functions that are $T$-definable as those that are left fixed by automorphisms of sufficiently saturated models of~$T$. 
This observation yields Corollary~\ref{C: definability closure properties} below.

For notational convenience, if $f$ is a function symbol for a given function $F:\mathbb{R}^n\to\mathbb{R}$, we will liberally identify $f$ with its interpretation $F$.
\begin{corollary}
\label{C: definability closure properties}
Let $T$ be a uniform theory. 
\begin{enumerate}[\normalfont(1)]
\item
\label{I:composition definable}
A composition of functions that are $T$-definable is $T$-definable.
\item
\label{I:continuous definable}
Every continuous function $F:\mathbb{R}^n\to\mathbb{R}$ is $T$-definable.
\item
\label{I:sup definable}
If $F:\mathbb{R}^{n+1}\to\mathbb{R}$ is $T$-definable by $T$, so are the functions $G_r:\mathbb{R}^n\to\mathbb{R}$ and $H_r:\mathbb{R}^n\to\mathbb{R}$ defined by
\begin{align*}
G_r(x_1,\dots, x_n)&=\sup_{y\in B(r)} F(x_1,\dots, x_n, y),\\
\qquad
H_r(x_1,\dots, x_n)&=\inf_{y\in B(r)} F(x_1,\dots, x_n, y).
\end{align*}
\end{enumerate}
\end{corollary}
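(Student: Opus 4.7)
The plan is to prove each of the three parts by invoking the Beth-Svenonius Definability Theorem~\ref{T:beth}, in each case passing to whichever of its two equivalent criteria (explicit definability, or implicit definability) is easier to verify. For (\ref{I:composition definable}) and (\ref{I:sup definable}) the implicit criterion is cleanest; for (\ref{I:continuous definable}) the explicit criterion is forced upon us, because we must actually produce $L$-formulas.

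For (\ref{I:composition definable}), let $g, h_1, \ldots, h_k$ be function symbols whose interpretations in models of $T$ are the given $T$-definable functions. Introduce a new symbol $f$ intended for the composition and let $T^*$ extend $T$ by positive bounded axioms expressing $f(\bar x)=g(h_1(\bar x),\dots,h_k(\bar x))$ (via the usual scheme $d(f(\bar x),g(h_1(\bar x),\dots,h_k(\bar x)))\le 0$). One first checks that $T^*$ is a uniform theory: moduli of local boundedness and of local uniform continuity for $f$ are assembled from those of $g$ and the $h_i$ in the evident way. Then, given any $L$-reduct of a model $(\cM,f^{\cM})\appmod T^*$, the interpretations $g^{\cM}$ and $h_i^{\cM}$ are uniquely determined by the $T$-definability of each, whence $f^{\cM}$ is determined as well. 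By Theorem~\ref{T:beth}, $f$ is $T^*$-definable in~$L$.

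For (\ref{I:continuous definable}), fix a continuous $F:\mathbb{R}^n\to\mathbb{R}$, a rational $r>0$, and nonempty intervals $I=[p,q]\subset J=(u,v)$. Since the sort $\mathbb{R}$ is interpreted identically in every $L$-structure (as the actual real line, with rational constants and polynomial terms), the set $K = F^{-1}(I)\cap\overline{B(r)}\subset\mathbb{R}^n$ is compact and contained in the open set $F^{-1}(J)$. The plan is to produce the desired $L$-formula $\varphi_{r,I,J}$ as a finite disjunction of closed rational-ball conditions: cover $K$ by finitely many closed balls $\overline{B(\bar c_i,\rho_i)}$ with $\bar c_i\in\mathbb{Q}^n$, $\rho_i\in\mathbb{Q}_{>0}$, and $\rho_i$ chosen slightly smaller than some $\rho_i'$ with $\bigcup_i\overline{B(\bar c_i,\rho_i')}\subset F^{-1}(J)$; then set
\begin{equation*}
  \varphi_{r,I,J}(\bar x) \ : \ \bigvee_i \sum_j (x_j-c_{i,j})^2 \le \rho_i^2.
\end{equation*}
If $F(\bar a)\in I$ and $\bar a\in\overline{B(r)}$, then $\bar a\in K$ lies in some $\overline{B(\bar c_i,\rho_i)}$, so $\cM\models\varphi_{r,I,J}[\bar a]$. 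Conversely, if $\cM\appmod\varphi_{r,I,J}[\bar a]$, then every approximation relaxing each $\rho_i^2$ to $(\rho_i+\delta)^2$ holds, and a pigeonhole on the finite index set~$i$ gives some fixed $i$ satisfying $\sum_j(a_j-c_{i,j})^2\le\rho_i^2$; thus $\bar a\in\overline{B(\bar c_i,\rho_i)}\subset F^{-1}(J)$ and $F(\bar a)\in J$.

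For (\ref{I:sup definable}), introduce a new symbol~$\gamma$ for $G_r$ and extend the relevant uniform theory for $F$ by axioms asserting $\gamma(\bar x)=\sup_{y\in B(r)}F(\bar x,y)$ (i.e.\ $\forall_r y\,(F(\bar x,y)\le\gamma(\bar x))$ together with $\forall_\epsilon'$-style axioms forcing $\gamma(\bar x)$ to be no larger than any upper bound). Uniformity of the extended theory is again routine: bounds and continuity moduli for $G_r$ on $\overline{B(s)}$ are inherited from those of $F$ on $\overline{B(s)}\times\overline{B(r)}$, via $|G_r(\bar x)-G_r(\bar x')|\le\sup_y|F(\bar x,y)-F(\bar x',y)|$. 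In any model, $\gamma^{\cM}$ is determined by $F^{\cM}$ (since $\mathbb{R}^{\cM}=\mathbb{R}$ and hence $B(r)$ is fixed), which is itself determined by the $L$-reduct by the $T$-definability of $F$; the analogous argument handles $H_r$. I expect the main obstacle to be bookkeeping rather than genuine mathematical difficulty: in part~(\ref{I:continuous definable}) one must thread carefully between discrete and approximate satisfaction to ensure both directions of the definability scheme, and in the other two parts one must verify that the extensions of $T$ remain uniform, which constrains the precise form of the axioms chosen for the new symbols.
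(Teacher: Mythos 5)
Your proof is correct but takes a genuinely different route from the paper's, which is very terse. The paper dispatches parts~(\ref{I:composition definable}) and~(\ref{I:sup definable}) in one line as ``follow[ing] from the definitions,'' the intended argument being a direct assembly of explicit definition schemes (Definition~\ref{def:explicit-def}): compose the schemes for the inner and outer functions to obtain a scheme for the composite, and combine $\forall_r y$- and $\exists_r y$-quantified instances of the scheme for $F$ to obtain schemes for $G_r$ and $H_r$. You instead pass to the implicit criterion of Theorem~\ref{T:beth} by extending the signature and theory; this is logically valid, but it trades a direct syntactic computation for the (routine but real) obligation of verifying uniformity of the extended theories, and it invokes heavier machinery than these clauses require. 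For part~(\ref{I:continuous definable}) the paper's proof is different from yours in an interesting way: it observes that rational polynomials are definable in any uniform theory, being built from the ordered-ring operations and rational constants that every Henson signature carries on the sort $\mathbb{R}$, and then concludes by the Stone--Weierstrass theorem, since a polynomial uniformly within $\delta$ of $F$ on a closed ball, with $\delta$ less than the gap between $I$ and $J$, already furnishes the required $\varphi_{r,I,J}$. Your covering of the compact set $F^{-1}(I)\cap\overline{B(r)}$ by rational balls is a self-contained, more elementary alternative; both arguments hinge on the same key observation, which you correctly make explicit, that the sort $\mathbb{R}$ is interpreted identically as the real line in every $L$-structure, so that compactness arguments in $\mathbb{R}^n$ are available.
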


\begin{proof}
Clauses  (\ref{I:composition definable}) and (\ref{I:sup definable}) follow from the definitions. 
To prove~(\ref{I:continuous definable}), note that, since every signature comes equipped with the ordered field and lattice structure for $\mathbb{R}$ plus a constant for each rational,  all polynomials functions with rational coefficients are definable in any theory. 
Therefore,~(\ref{I:continuous definable}) follows from the Stone-Weierstrass theorem.
\end{proof}

\begin{definition}
\label{D:definable}
If $L$ is a signature, $f$ is an $n$-ary function symbol (not necessarily real-valued), and $T$ is a uniform $L[f]$-theory,
we will say that $f$ is $T$-\emph{definable in~$L$} if the real-valued function $d(f(x_1,\dots, x_n),y)$ is $T$-definable in $L$ in the sense of Definition~\ref{D:definable real-valued}.
As in Definition~\ref{D:definable real-valued}, if $L$ or $T$ are given by the context, we may omit them from the nomenclature.
\end{definition}

\begin{remark}
It is not difficult to verify that, if $f$ is a real-valued function symbol, there is no conflict between the notions of definability for $f$ given by Definitions~\ref{D:definable real-valued} and~\ref{D:definable}. 
(This is because if $f$ is $n$-ary and $F$ is an interpretation of $f$, then for any real number $r$,  $d(F(a_1,\dots, a_n),b)=r$ if and only if $b = \pm F(a_1,\dots, a_n)$.)
\end{remark}

\subsection{Spaces of types and the monster model}
\label{sec:spaces-types}

In this section, $T$ will denote a fixed complete $L$-theory and we will denote by $\mathscr T$ the set of types that are realized in a model of~$T$. 
Note that, if $(t_i)_{i\in I}$ is a family of types in $\mathscr T$ and $t_i$ is realized in $\mathcal M_i$ then, by Corollary~\ref{C:families of equivalent structures}, there exists a model $\mathcal N$ of $T$ such that $\mathcal M_i\prec \mathcal{N}$ for every $i\in I$. 
Thus, each $t_i$ is realized in $\mathcal{N}$.

By Remark~\ref{R:types}-(\ref{I:types realized in extension}), if $t=t(x_1,\dots, x_n)$ is a type in $\mathscr T$, then there exists a positive rational $r$ such that
\[
\exists_r x_1\dots \exists_r x_n\, \bigwedge_{\varphi\in\varphi}\varphi(x_1,\dots, x_n) \in t,
\qquad
\text{for each finite $\varphi\subseteq t$}.
\]
For each choice of $n$ and  $r$, we will denote by $\mathscr{T}_n^{(r)}$ be the set of all $t(x_1,\dots, x_n)\in\mathscr T$ that satisfy this condition.

If $\varphi(x_1,\dots,x_n)$ is an $L$-formula, let
\[
[\varphi]=\{\, t\in \mathscr T \mid \varphi\in t\, \}.
\]
The collection of sets of this form is closed under finite unions and intersections. We define a topology on $\mathscr T$ by letting these be the basic closed sets. 
We will refer to this topology as the \emph{logic topology}.
We shall always regard $\mathscr T$ as a topological space via the logic topology. 

\begin{remark}
\label{R:compactness and types}
The Compactness Theorem~\ref{T:compactness} says exactly that $\mathscr{T}_n^{(r)}$ is compact for each $n$ and~$r$.  
\end{remark}

The complement of a set of the form $[\varphi]$ is not necessarily of the same form; 
however,  if $t\in \mathscr T$, by Proposition~\ref{P:types}-(\ref{I:weak negation}),  we have $t\notin [\varphi]$ if and only if there exists $\psi\in t$ and $\psi'>\psi$ such that $t\in[\psi']\subseteq[\varphi]^c$. 
Thus, for $t\in \mathscr T$, the sets of the form $[\psi']$ where $\psi'$ is an approximation of a formula $\psi\in t$ form a local neighborhood base around $t$.

Let $\varphi(x_1,\dots,x_n)$ be the formula $\exists_r x_1\dots \exists_r x_n\, \bigwedge_{1\le i \le n} x_i=x_i$. Then, $[\varphi]=\mathscr{T}_n^{(r)}$. 
Every type contains a formula $\varphi$ of this kind, for some $n$ and some $r$. Therefore, the space $\mathscr T$ is locally compact.

Let $A$ be a set of parameters, i.e., $A$ is a set of elements of some fixed model $\mathcal{M}$ of $T$. If $(t_i)_{i\in I}$ is a family in $\mathscr T_n^{(r)}$ such that $t_i$ is a type over $A$ for each $i\in I$ and $\mathcal{U}$ is an ultrafilter of $I$, then, by the compactness of $\mathscr T_n^{(r)}$, the limit  $\mathcal{U}\lim_{i} t_i$ is a type over $A$. 
Conversely, if $t(\bar x)$ is a type over $A$, since $t$ is finitely satisfiable in $\mathcal{M}$ (see Definition~\ref{def:fin-satisf}), there exists a family $(\bar a_i)_{i\in I}$ in $\mathcal{M}$ and an ultrafilter $\mathcal{U}$ on $I$ such that $t=\mathcal{U}\lim_{i} \tp_A(\bar a_i)$. Thus, types over  model $\mathcal{M}$ can be viewed as ultrafilter limits of types realized within $\mathcal{M}$.
  
Let $\kappa$ be a cardinal that is both larger than the cardinality of every model of $T$ needed in our proofs and larger than the cardinality of any indexed family of types mentioned. 
Let now $\mathfrak C$ be a $\kappa$-saturated, $\kappa$-homogeneous model of $T$ (see Theorem~\ref{T:keisler-shelah}). 
Then we can assume that:

\begin{enumerate}
\item
Every model of $T$ occurs as an elementary submodel of $\mathfrak C$ (see Proposition~\ref{P:Keisler-Shelah}).
\item
Every type (over a set of parameters is a model of $T$)  is realized in $\mathfrak C$.
\item
If $(a_1,\dots, a_n)$ and $(b_1,\dots, b_n)$ are $n$-tuples of elements of $\mathfrak{C}$ such that $\tp_A(a_1,\dots, a_n)=\tp_A(b_1,\dots, b_n)$, then there exists an automorphism $f$ of $\mathfrak{C}$ such that $f(a_i)=b_i$ for $i=1,\dots, n$.
\item
A real-valued function $f$ is definable in $T$ over a set of parameters $A$ if and only if $f$ is fixed by every automorphism of $\mathfrak C$ that fixes $A$ pointwise fixes the graph of $f$. (See the remarks preceding Corollary~\ref{C: definability closure properties}.)
\end{enumerate}
We shall henceforth refer to $\mathfrak C$ as a \emph{big model} or a \emph{monster model} for $T$. Monster models are a time-saving device, and what makes them convenient  are the properties listed above. For each complete theory considered, we will always work within a fixed monster model. The particular choice of monster model will be irrelevant for our discussions since any two such models can be  embedded elementarily in a common one. Thus, we may informally refer to ``the'' monster model of $T$.

There is another topology on types that plays an important role in the model theory of metric spaces.   
Let $t(x_1,\dots, x_n)$ and $t'(x_1,\dots, x_n)$ are types consistent with $T$ (i.e., $t$ and $t'$ are realized in the monster model) over the same set of parameters (which can be thought of as a subset of the monster model). 
We define $d(t,t')$ as the infimum of all distances $d(\bar a,\bar a')$ where $\bar a$ realizes $t$ and $\bar a'$ realizes $t'$. 
Note that this infimum is realized by some pair $\bar a,\bar a'$ in the monster model. 
It can be readily verified, using the saturation of the monster model, that $d$ is a metric and that $(\mathscr T_n^{(r)}, d)$ is compact and complete.

\section*{Appendix: On the notion of ``finitary'' properties of metric structures}

We conclude this manuscript with some general remarks on the meaning of the informal term ``finitary'' in our context. 
Describing a certain mathematical property as finitary presupposes, in our view, the existence of a formal language~$\cL$ in which the property can be formulated (here we use the term ``language'' in an abstract sense not restricted to first-order languages or Henson's language of positive bounded formulas). 
There is an inherent tradeoff between the expressive power of logical languages and the strength of their model-theoretic properties. 
On one hand, if the language $\cL$ is rich enough (for example, if $\cL$ admits infinite conjunctions and disjunctions), it may capture complex properties with, say, a single formula; 
however, a powerful model-theoretic property like the Compactness Theorem (Theorem~\ref{T:compactness}) is bound to fail for such~$\cL$. 
On the other hand, compactness of first-order-like logics (including Henson's logic) is, in essence, a reflection of the limited expressive power of the language. 
In fact, for metric structures, there is no logic strictly more expressive than Henson logic satisfying both the Compactness Theorem and the elementary chain property (Proposition~\ref{P:elementary chain})~\cite{Iovino:2001}.

A strong feature of the notion of approximate satisfaction of positive bounded formulas is that it inherently captures ``asymptotically approximable'' properties of $L$-structures: 
By definition, the approximate satisfaction of an $L$-formula $\varphi$ amounts to the discrete satisfaction of the full set $\varphi_+$ of formulas $\psi$ approximating~$\varphi$. 
Now, if $\Phi$ is a set of positive bounded of $L$-formulas such that $\Phi\appmod\varphi$ (i.e., every model of $\Phi$ satisfies $\varphi$ approximately), then every approximation of $\varphi$ admits a finite-length proof from~$\Phi$ plus the axioms for the real numbers and metric spaces.\footnote{This was first observed by C.~Ward Henson in the 1980s. 
He gave lectures on this material, but his lecture notes were not formally published.
The idea of finite provability via approximations can be traced back to Mostowski, who proved~\cite{Mostowski:1961} that  for any rational $r < 1$, the set of sentences of {\L}ukasiewicz logic that take truth value $>r$ in all structures is recursively enumerable. 
Simultaneously, E. Specker's student B. Scarpellini had proved in his 1961 dissertation that the valid sentences of {\L}ukasiewicz logic are not axiomatizable.} 
but $\varphi$ itself may not admit a single such proof. 
A property $P$ captured by the approximate satisfaction of a single formula~$\varphi$ should by all rights be called finitary, although $P$ may not admit a finite-length proof. 
Still, it is enough for each rational $\epsilon>0$ to prove $\varphi_{\epsilon}$, the formula obtained from~$\varphi$ by $\epsilon$-relaxing every inequality and quantifier bound in $\varphi$. 
This approach, though \emph{sensu stricto} infinitary, is much the one used when a proof in analysis starts with the incantation: \emph{``Let $\epsilon>0$ be given.''}
(Of course, it is always desirable that ``the same'' proof works for all $\epsilon>0$ in the sense that $\epsilon$ only appears in formulas used in the proof as a parameter, so that a uniform scheme proves all~$\varphi_{\epsilon}$.)

More generally, any property~$P$ equivalent to the simultaneous satisfaction of a collection~$\Psi$ of positive bounded formulas is finitary in the following sense: 
If $P$ holds for all structures in a class~$\mathcal{C}$ axiomatized by a theory~$T$, then $P$ admits, in principle, a proof scheme (say, a syntactic proof for each $\psi\in\Psi_+$).
Equivalently, the \emph{failure} of~$P$ in a structure~$\cM$ is equivalent to the discrete satisfaction $\cM\models\wneg\psi$ of the weak negation of a single formula $\psi\in\Psi_+$.
Thus, a finitary property is witnessed by the absence of counterexamples having finite-length proofs. 

Tao observes in his paper on ergodic averages~\cite{Tao:2008} that metastability is connected to ideas from proof theory; thanking U.~Kohlenbach for the observation,  he notes that metastability is an instance of Kreisel's no-counterexample interpretation~\cite{Kreisel:1951,Kreisel:1952}, which is in turn a particular case of G\"odel's Dialectica interpretation~\cite{Godel:1958}.  In fact, before Tao's paper, the concept had been used extensively, under different nomenclature, by Avigad, Gerhardy, Kohlenbach, and Towsner in the context of proof mining \cite{Avigad-Gerhardy-Towsner:2010, Kohlenbach-Lambov:2004, Kohlenbach:2005a,Kohlenbach:2008}.
For a more up-to-date survey on metastabillity rates obtained by proof mining, see Kohlenbach's lecture at the 2018 International Congress of Mathematicians~\cite[pp.~68-69]{Kohlenbach:2018}.

The property that a sequence $(a_n)$ in a metric $L$-structure be convergent cannot, in general, be captured by the simultaneous satisfaction of any collection (whether finite or infinite) of positive bounded $L$-formulas. 
However, the property \emph{``$(a_n)$ is convergent''} is equivalent to the disjunction of the properties \emph{``$\Eb$ is a rate of metastability for~$(a_n)$''} over all possible metastability rates~$\Eb$. 
For a specific~$\Eb$, the latter property is equivalent to the conjunction of the (infinitely many) properties \emph{``$E_{\epsilon,\eta}$ is a rate of $[\epsilon,\eta]$-metastability for~$(a_n)$''} for all $\epsilon>0$ and samplings~$\eta$.
Thus, from the perspective discussed above, metastable convergence with rate~$\Eb$ is a finitary property of metric structures, while convergence with no specified rate is not.

\bibliographystyle{alpha}
\bibliography{../iovino}

\begin{thebibliography}{BYBHU08}

\bibitem[ADR12]{Avigad-Dean-Rute:2012}
Jeremy Avigad, Edward~T. Dean, and Jason Rute.
\newblock A metastable dominated convergence theorem.
\newblock {\em J. Log. Anal.}, 4:Paper 3, 19, 2012.

\bibitem[AGT10]{Avigad-Gerhardy-Towsner:2010}
Jeremy Avigad, Philipp Gerhardy, and Henry Towsner.
\newblock Local stability of ergodic averages.
\newblock {\em Trans. Amer. Math. Soc.}, 362(1):261--288, 2010.

\bibitem[AI13]{Avigad-Iovino:2013}
Jeremy Avigad and Jos{\'e} Iovino.
\newblock Ultraproducts and metastability.
\newblock {\em New York J. Math.}, 19:713--727, 2013.

\bibitem[BYBHU08]{Ben-Yaacov-Berenstein-Henson-Usvyatsov:2008}
Ita{\"{\i}} Ben~Yaacov, Alexander Berenstein, C.~Ward Henson, and Alexander
  Usvyatsov.
\newblock Model theory for metric structures.
\newblock In {\em Model theory with applications to algebra and analysis.
  {V}ol. 2}, volume 350 of {\em London Math. Soc. Lecture Note Ser.}, pages
  315--427. Cambridge Univ. Press, Cambridge, 2008.

\bibitem[BYU10]{Ben-Yaacov-Usvyatsov:2010}
Ita{\"{\i}} Ben~Yaacov and Alexander Usvyatsov.
\newblock Continuous first order logic and local stability.
\newblock {\em Trans. Amer. Math. Soc.}, 362(10):5213--5259, 2010.

\bibitem[CI14]{Caicedo-Iovino:2014}
Xavier Caicedo and Jos{\'e}~N. Iovino.
\newblock Omitting uncountable types and the strength of {$[0,1]$}-valued
  logics.
\newblock {\em Ann. Pure Appl. Logic}, 165(6):1169--1200, 2014.

\bibitem[CK62]{Chang-Keisler:1961}
Chen-chung Chang and H.~Jerome Keisler.
\newblock Model theories with truth values in a uniform space.
\newblock {\em Bull. Amer. Math. Soc.}, 68:107--109, 1962.

\bibitem[CK66]{Chang-Keisler:1966}
Chen-chung Chang and H.~Jerome Keisler.
\newblock {\em Continuous model theory}.
\newblock Annals of Mathematics Studies, No. 58. Princeton Univ. Press,
  Princeton, N.J., 1966.

\bibitem[Cut83]{Cutland:1980}
Nigel~J. Cutland.
\newblock Nonstandard measure theory and its applications.
\newblock {\em Bull. London Math. Soc.}, 15(6):529--589, 1983.

\bibitem[Cut00]{Cutland:2000}
Nigel~J. Cutland.
\newblock {\em Loeb measures in practice: recent advances}, volume 1751 of {\em
  Lecture Notes in Mathematics}.
\newblock Springer-Verlag, Berlin, 2000.

\bibitem[G\"58]{Godel:1958}
Kurt G\"odel.
\newblock \"uber eine bisher noch nicht ben\"utzte {E}rweiterung des finiten
  {S}tandpunktes.
\newblock {\em Dialectica}, 12:280--287, 1958.

\bibitem[Hen75]{Henson:1975}
C.~W. Henson.
\newblock When do two {B}anach spaces have isometrically isomorphic nonstandard
  hulls?
\newblock {\em Israel J. Math.}, 22(1):57--67, 1975.

\bibitem[Hen76]{Henson:1976}
C.~W. Henson.
\newblock Nonstandard hulls of {B}anach spaces.
\newblock {\em Israel J. Math.}, 25(1-2):108--144, 1976.

\bibitem[HI02]{Henson-Iovino:2002}
C.~Ward Henson and Jos{\'e} Iovino.
\newblock Ultraproducts in analysis.
\newblock In {\em Analysis and logic (Mons, 1997)}, volume 262 of {\em London
  Math. Soc. Lecture Note Ser.}, pages 1--110. Cambridge Univ. Press,
  Cambridge, 2002.

\bibitem[Iov01]{Iovino:2001}
Jos\'e Iovino.
\newblock On the maximality of logics with approximations.
\newblock {\em J. Symbolic Logic}, 66(4):1909--1918, 2001.

\bibitem[Iov09]{Iovino:2009}
Jos{\'e} Iovino.
\newblock Analytic structures and model theoretic compactness.
\newblock In {\em The many sides of logic}, volume~21 of {\em Stud. Log.
  (Lond.)}, pages 187--199. Coll. Publ., London, 2009.

\bibitem[Iov14]{Iovino:2014}
Jos{\'e} Iovino.
\newblock {\em Applications of model theory to functional analysis}.
\newblock Dover Publications, Inc., Mineola, NY, 2014.
\newblock Revised reprint of the 2002 original, With a new preface, notes, and
  an updated bibliography.

\bibitem[KL04]{Kohlenbach-Lambov:2004}
Ulrich Kohlenbach and Branimir Lambov.
\newblock Bounds on iterations of asymptotically quasi-nonexpansive mappings.
\newblock In {\em International {C}onference on {F}ixed {P}oint {T}heory and
  {A}pplications}, pages 143--172. Yokohama Publ., Yokohama, 2004.

\bibitem[Koh]{Kohlenbach:2018}
Ulrich Kohlenbach.
\newblock Proof-theoretic methods in nonlinear analysis.
\newblock Proc. Int. Cong. of Math. ? 2018, Rio de Janeiro, Vol. 1 (61-82)
  \texttt{http://www.icm2018.org}.

\bibitem[Koh05]{Kohlenbach:2005a}
Ulrich Kohlenbach.
\newblock Some computational aspects of metric fixed-point theory.
\newblock {\em Nonlinear Anal.}, 61(5):823--837, 2005.

\bibitem[Koh08]{Kohlenbach:2008}
Ulrich Kohlenbach.
\newblock Effective uniform bounds from proofs in abstract functional analysis.
\newblock In {\em New computational paradigms}, pages 223--258. Springer, New
  York, 2008.

\bibitem[Kre51]{Kreisel:1951}
G.~Kreisel.
\newblock On the interpretation of non-finitist proofs. {I}.
\newblock {\em J. Symbolic Logic}, 16:241--267, 1951.

\bibitem[Kre52]{Kreisel:1952}
G.~Kreisel.
\newblock On the interpretation of non-finitist proofs. {II}. {I}nterpretation
  of number theory. {A}pplications.
\newblock {\em J. Symbolic Logic}, 17:43--58, 1952.

\bibitem[Loe75]{Loeb:1975}
Peter~A. Loeb.
\newblock Conversion from nonstandard to standard measure spaces and
  applications in probability theory.
\newblock {\em Trans. Amer. Math. Soc.}, 211:113--122, 1975.

\bibitem[{\L}o{\'s}55]{Los:1955}
J.~{\L}o{\'s}.
\newblock Quelques remarques, th\'eor\`emes et probl\`emes sur les classes
  d\'efinissables d'alg\`ebres.
\newblock In {\em Mathematical interpretation of formal systems}, pages
  98--113. North-Holland Publishing Co., Amsterdam, 1955.

\bibitem[Mos62]{Mostowski:1961}
A.~Mostowski.
\newblock Axiomatizability of some many valued predicate calculi.
\newblock {\em Fund. Math.}, 50:165--190, 1961/1962.

\bibitem[Ros97]{Ross:1997}
David~A. Ross.
\newblock Loeb measure and probability.
\newblock In {\em Nonstandard analysis ({E}dinburgh, 1996)}, volume 493 of {\em
  NATO Adv. Sci. Inst. Ser. C Math. Phys. Sci.}, pages 91--120. Kluwer Acad.
  Publ., Dordrecht, 1997.

\bibitem[She71]{Shelah:1971}
S.~Shelah.
\newblock Every two elementarily equivalent models have isomorphic ultrapowers.
\newblock {\em Israel J. Math.}, 10:224--233, 1971.

\bibitem[Tao]{Tao:Metastability}
Terence Tao.
\newblock Walsh's ergodic theorem, metastability, and external {C}auchy
  convergence.
\newblock \texttt{http://terrytao.wordpress.com}.

\bibitem[Tao08]{Tao:2008}
Terence Tao.
\newblock Norm convergence of multiple ergodic averages for commuting
  transformations.
\newblock {\em Ergodic Theory Dynam. Systems}, 28(2):657--688, 2008.

\bibitem[Wal12]{Walsh:2012}
Miguel~N. Walsh.
\newblock Norm convergence of nilpotent ergodic averages.
\newblock {\em Ann. of Math. (2)}, 175(3):1667--1688, 2012.

\end{thebibliography}
\end{document}